\newcommand{\nn}[1]{(\ref{#1})}
\newcommand{\Wp}[2]{W^{#1}{}_{#2}}
\newcommand{\Zp}[2]{Z_{#1}{}^{#2}} 
\newcommand{\bp}{\boldsymbol{p}}
\newcommand{\cB}{\mathcal{B}}
\newcommand{\lpl}{
  \mbox{$
  \begin{picture}(12.7,8)(-.5,-1)
  \put(2,0.2){$+$}
  \put(6.2,2.8){\oval(8,8)[l]}
  \end{picture}$}}
\def\frak{\mathfrak}
\let\phi\varphi
\newcommand{\si}{\sigma}
\def\Rho{\mbox{\textsf{P}}}
\newcommand{\End}{\operatorname{End}}
\newcommand{\D}{\mathbb{D}}
\newcommand{\nd}{\nabla}
\newcommand{\ce}{\mathcal{E}}
\newcommand{\cT}{\mathcal{T}}
\newcommand{\rpl}                         
{\mbox{$
\begin{picture}(12.7,8)(-.5,-1)
\put(0,0.2){$+$}
\put(4.4,3.1){\oval(8,8)[r]}
\end{picture}$}}
\newcounter{theorem}
\newtheorem{thm}[theorem]{Theorem}
\newtheorem*{thm*}{Theorem \thesubsection}
\newtheorem{lemma}[theorem]{Lemma}
\newtheorem{prop}[theorem]{Proposition}
\newtheorem{cor}[theorem]{Corollary}
\newtheorem*{lemma*}{Lemma \thesubsection}
\newtheorem*{prop*}{Proposition \thesubsection}
\newtheorem*{cor*}{Corollary \thesubsection}
\theoremstyle{definition}
\newtheorem*{definition*}{Definition \thesubsection}
\newtheorem*{example*}{Example \thesubsection}
\theoremstyle{remark}
\newtheorem{remark}[theorem]{Remark}
\newtheorem*{remark*}{Remark \thesubsection}
\def\sideremark#1{\ifvmode\leavevmode\fi\vadjust{\vbox to0pt{\vss
 \hbox to 0pt{\hskip\hsize\hskip1em
 \vbox{\hsize3cm\tiny\raggedright\pretolerance10000
  \noindent #1\hfill}\hss}\vbox to8pt{\vfil}\vss}}}%
\begin{document}

\title
[Invariant Prolongation of the Killing tensor equation]
{Invariant prolongation of the Killing tensor equation}
\author[A.~R.~Gover]{A.~Rod Gover}
\address{A.R.G.: Department of Mathematics\\
The University of Auckland\\
Private Bag 92019\\
Auckland 1142\\
New Zealand}
\email{r.gover@auckland.ac.nz}

\author[T.~Leistner]{Thomas Leistner}
\address{T.L.: School of Mathematical Sciences\\University of Adelaide\\SA 5005\\Australia
}
\email{thomas.leistner@adelaide.edu.au}

%


\begin{abstract}
The Killing tensor equation is a first order differential equation on
symmetric covariant tensors that generalises to higher rank the usual
Killing vector equation on Riemannian manifolds. We view this more
generally as an equation on any manifold equipped with an affine
connection, and in this setting derive its prolongation to a linear
connection. This connection has the property that parallel sections
are in 1-1 correspondence with solutions of the Killing
equation. Moreover this connection is projectively invariant and  is
derived entirely using the projectively invariant tractor calculus which reveals also further invariant structures linked to the prolongation. 
  \end{abstract}





\subjclass[2010]{Primary: 53B10; Secondary: 53A20}

\thanks{ARG gratefully acknowledges support from the Royal
  Society of New Zealand via Marsden Grant 16-UOA-051. TL was  partially supported by the grant 346300 for IMPAN from the Simons Foundation and the matching 2015-2019 Polish MNiSW fund}

\maketitle

\section{Introduction}

On a Riemannian manifold $(M,g)$ a tangent vector field $k\in
\frak{X}(M)$ is an infinitesimal automorphism (or symmetry) if the Lie derivative of the metric $g$ in direction of $k$ vanishes.
In terms of the Levi-Civita connection $\nabla=\nabla^g$, this
may be written as
\begin{equation}\label{one}
\nabla_{(a}k_{b)}=0
\end{equation}
where we use an obvious abstract index notation, $k_a=g_{ab}k^b$, and the
$(ab)$ indicates symmetrisation over the enclosed indices. This {\em
  Killing equation} is generalised to higher rank $r\geq 1$ by the
{\em Killing tensor equation} equation
\begin{equation}\label{hKill}
\nabla_{(a}k_{b \cdots c)}=0 
  \end{equation}
 where $k_{b\cdots c}$ is a symmetric tensor, that is $k \in
 \Gamma(S^r T^*M)$ and again $(ab \cdots c)$ indicates symmetrisation
 over the enclosed indices. Solutions of this, so-called { {\em
     Killing tensors}, are important for treatment of separation of
   variables \cite{AnderssonBlue15,Kalnins86,Miller77book,SchobelVeselov15}, higher
   symmetries of the Laplacian and similar operators
   \cite{AnderssonBackdahlBlue14,DurandLinaVinet88,eastwood05,GoverSilhan12,LevasseurStafford17,MichelSombergv-Silhan17}, and for the theory of
   integrable systems, and superintegrability \cite{CarigliaGibbonsHoltenHorvathyKosinski14,DuvalValent05,De-BieGenestLemayVinet17,KalninsKressWinternitz02,KalninsKressMiller05}. Partly these applications arise because a solution of
   \nn{hKill} (for any $r$) provides a first integral along geodesics:
   if $\gamma:I\to M$ is a geodesic (where $I\subset \mathbb{R}$ is an
   interval) and $u:=\dot\gamma$ is the velocity of this then
   $\nabla_uu=0$ and therefore by dint of \nn{hKill} the function
   $k_{b\cdots c}u^b\cdots u^c$ is constant along $\gamma$.

   In dimensions $n\geq 2$ (which we assume throughout) the equation
   \nn{hKill} is an overdetermined finite type linear partial
   differential equation. This means, in particular, that it is
   equivalent to a linear connection on a system that involves the
   Killing tensor $k$ but also additional variables, the {\em
     prolonged system} \cite{BransonCapEastwoodGover06,Spencer69}.  For example for
   equation (\ref{one}) above this prolonged system is very easily found to be
   \begin{equation}\label{eg}
\overline{\nabla}_a\left(\begin{array}{c}k_c\\ \mu_{bc}\end{array}\right) = \left(\begin{array}{c}\nabla_a k_b- \mu_{ab}\\ \nabla_a\mu_{bc} - R^{\phantom{b}}_{bc}{}^d{}_{a}k_d ,\end{array}\right)
   \end{equation}
   where $ R^{\phantom{b}}_{bc}{}^d{}_{a}$ is the curvature of $\nabla$
   (see Section \ref{curvedrk1sec} below).
   In general such
   prolonged systems are not unique, but for any such connection its
   parallel sections correspond 1-1 with solutions of the original
   equation (\nn{hKill} in this case). Thus, on connected manifolds,
   the rank of the prolonged systems gives an upper bound on the
   dimension of the space of solutions and curvature of the given
   connection can lead to obstructions to solving the equation, see
   e.g. \cite{BryantDunajskiEastwood09,GoverMacbeth14,gover-nurowski04}.

   Two affine connections $\nabla$ and $\nabla'$ are said to be {\em
     projectively equivalent} if they share the same unparametrised
   geodesics. Connections differing only by torsion are projectively
   related, and we will lose no generality in our work here if we
   restrict to torsion free connections, which we do henceforth.  An
   equivalence class of $\bp=[\nabla]$ of such projectively related
   torsion-free connections is called a {\em projective structure} and
   a manifold $M^{n \geq 2}$ equipped with such a structure is called
   a {\em projective manifold}.  An important but not fully exploited
   feature of the equation \nn{hKill} is that it is {\em projectively
     invariant}. This will be explained fully in Section \ref{ptrac},
    but at this stage it will
   suffice to say the following. First when we introduced \nn{hKill}
   above, $\nabla $ denoted the Levi-Civita connection of a metric, but the equation
   makes sense and is important for any affine connection $\nabla$,
   and it is in this setting that we now study it.  Next the
   projective invariance means that the equation \nn{hKill} has a
   certain insensitivity and, in particular, descends to a well
   defined equation on a projective manifold $(M,\bp)$.

On a general projective manifold $(M,\bp)$ there is no distinguished
affine connection on $TM$. However there is a distinguished
projectively invariant connection $\nabla^{\cT}$ on a vector bundle
$\cT$ that extends (a density twisting of) the tangent bundle $TM$:
\begin{equation}\label{ceuler}
0\to \ce(-1) \stackrel{X}{\to}\cT\to TM\otimes \ce(-1) \to 0
\end{equation}
where $\ce(-1)$ is a natural real oriented line bundle defined in
Section \ref{back} below. This is the normal projective tractor
connection and it (or the equivalent Cartan connection) provides the
basic tool for invariant calculus on projective manifolds. An
important feature of this connection is that it is on a low rank
bundle (i.e. $\operatorname{dim}(TM)+1$) that is simply related to the
tangent bundle. The tractor calculus is recalled in Section \ref{ptrac}.

   For most applications that one can imagine it makes sense then to
   seek a prolongation of \nn{hKill} that is itself  a projectively
   invariant connection. For example, if this can be found, then its
   curvature simultaneously constrains solutions for the entire class
   of projectively related connections. In fact such a connection
   exists. The equations \nn{hKill} is an example of a first BGG
   equation and arises as a special case of the very general theory of
   Hammerl et al.\ in \cite{HammerlSombergSoucekSilhan12} (see also
   \cite{HammerlSombergSoucekSilhan12inv}). That theory describes an algorithm for
   producing an invariant connection giving the prolonged system for
   any of the large class of BGG equations (and we refer the reader to
   that source for the meaning of these terms) and in this sense is
   very powerful. Although the algorithm of \cite{HammerlSombergSoucekSilhan12} produces
   in the end an invariant connection it proceeds through stages that
   break the invariance of the given equation. For example in treating
   \nn{hKill} the steps of the algorithm are not projectively
   invariant. Moreover beyond the case of rank 1 the explicit
   treatment of \nn{hKill} using this algorithm seems practically
   intractible due to the number of steps involved. Finally although
   the construction of \cite{HammerlSombergSoucekSilhan12} is strongly linked to the
   calculus of the normal tractor connection (of
   \cite{bailey-eastwood-gover94,cap/gover02,cap-slovak-book09}) the connection finally obtained
   is not easily linked to the normal tractor connection.

The aim of this article is to produce an alternative invariant prolongation
procedure that is simple, conceptual, explicit, and that reflects the
invariance properties of the original equations. It is well known that
for the projective BGG equations the normal tractor connection easily
recovers the required prolongation in the case that the structure is
projectively flat (i.e., the projective tractor/Cartan connection is
flat). A motivation is to be able to produce the explicit curvature
correction terms that modify the normal tractor connection to deal
with general solutions on a projectively curved manifold. An explicit knowledge of these terms will enable us to deduce properties of the prolongation and so properties of solutions in general. 
 We develop here a projectively invariant prolongation of the equation
   \nn{hKill} for each $r\geq 1$.
   This uses at all stages the calculus
   of the normal projective tractor conection $\nabla^{\cT}$ (as in
   \cite{bailey-eastwood-gover94}). The result is a connection on a certain projective
   tractor bundle (a tensor part of a power of the dual $\cT^*$ to
   $\cT$) that differs from the normal tractor connection by the
   algebraic action of a tractor field that is projectively invariant
   and produced in a simple way from the curvature of the normal
   tractor connection and iterations of a projectively invariant
   operator on this. An advantage is that the construction and
   calculation uses projectively invariant tools, and at all stages
   the link to the very simple normal tractor connection is manifest.
As an immediate application this approach typically simplifies the computation of integrability conditions, see Remark \ref{int-app} and in particular equation~\eqref{int-cond}.

   A tensorial approach to prolonging the Killing equation has been
   developed for arbitrary rank in \cite{Wolf98} (see also
   \cite{Collinson71}).  Concerning our results for the projectively
   flat case in Section \ref{pflat} there are necessarily some strong
   links to the prolongation approach of \cite{MichelSombergv-Silhan17}. However our route
   to the prolongation is very different and it is this that is
   important for the development of the curved theory.

   In fact there is considerable information in some of the
   preliminary results along the way in our treatment. For example
   each Killing equation is captured in the very simple tractor
   equation of Proposition \ref{pp}. This is part of a rather general
   picture which suggests that the theory here should generalise
   considerably. (In fact aspects of our treatment here were inspired
   by the conformally invariant prolongation of the conformal Killing
   equation via tractors in \cite[Proposition 2.2]{Gover}.)  This will
   be taken up in subsequent works. The Proposition \ref{pp} also may
   interpreted as showing that solutions of the Killing tensor equation
   on $(M,\bp)$ correspond in a simple way to Killing tensors for the
   canonical affine connection on the Thomas cone over $(M,\bp)$; the
   Thomas cone is discussed in e.g. \cite{CGH,cap-slovak-book09}.

Throughout we use Penrose's abstract index notation. As mentioned
above $(ab\cdots c)$ indicates symmetrisation over the enclosed
indices, while $[ab\cdots c]$ indicates skewing over the enclosed
indices. Then  $\ce$
   is used to denote the trivial bundle, and for example $\ce_{(abc)}$ is the bundle of covariant symmetric 3-tensors $S^3T^*M$.

\section{Background}\label{back}

\subsection{Conventions for affine geometry}\label{notat}

Let $(M,\nabla)$ be an  affine manifold (of dimension $n\geq
2$), meaning that $\nabla$ is a torsion-free affine connection.
The curvature
 $$R_{ab}{}^c{}_d\in\Gamma(\Lambda^2 T^*M \otimes TM\otimes T^*M )$$
of the
connection $\nabla$ is given by
$$
[\nabla_a,\nabla_b]v^c=R_{ab}{}^c{}_d  v^d , \qquad v\in \Gamma(TM).
$$
The Ricci curvature is defined by $R_{bd} =R_{cb}{}^c{}_d $. 

On an affine manifold the
trace-free part $W_{ab}{}^c{}_d$ of the curvature $R_{ab}{}^c{}_d$ is
called the {\em projective Weyl curvature} and we have
\begin{equation}\label{decp}
R_{ab}{}^c{}_d= W_{ab}{}^c{}_d +2 \delta^c_{[a}\Rho_{b]d}+\beta_{ab}\delta^c_d,
\end{equation}
where $\beta_{ab}$ is skew and $\Rho_{ab}$ is called the {\em projective Schouten tensor}. That $W_{ab}{}^c{}_d$ is trace-free means exactly that $W_{ab}{}^a{}_d=0$ and $W_{ab}{}^d{}_d=0$.
Since $\nabla$ is torsion-free the Bianchi symmetry
$R_{[ab}{}^c{}_{d]}=0$ holds, whence
$$
\beta_{ab}=-2\Rho_{[ab]} \qquad \mbox{and} \qquad (n-1)\Rho_{ab} = R_{ab}+\beta_{ab}.
$$

As we shall see below the curvature decomposition \nn{decp} is useful
in projective differential geometry.

First some further notation. On a smooth $n$-manifold $M$ the bundle
$\mathcal{K}:=(\Lambda^{n} TM)^2$ is an oriented line bundle and thus
we can take correspondingly oriented roots of this. For projective
geometry a convenient notation for these is as follows:
given $w\in
\mathbb{R}$ we write
\begin{equation} \label{pdensities}
\ce(w):=\mathcal{K}^{\frac{w}{2n+2}} . 
\end{equation}
\newcommand{\cK}{\mathcal{K}} Of course the affine connection $\nabla$
acts on $\Lambda^{n} TM$ and hence on the {\em projective density
  bundles} $\ce(w).$ As a point of notation, given a vector bundle
$\cB$ we often write $ \cB(w) $ as a shorthand for $\cB\otimes \ce(w)$.

\subsection{Projective geometry and tractor calculus}\label{ptrac}

Two affine torsion-free connections $\nabla' $ and $\nabla$ are
projectively equivalent, that is they share the same unparametrised
geodesics, if and only if there some $\Upsilon \in \Gamma(T^*M)$ s.t.
\begin{equation}\label{trans}
\nabla'_a v^b=\nabla_a v^b +\Upsilon_a v^b+ \Upsilon_c v^c\delta_a^b
\end{equation}
for all $v\in \Gamma(T^*M)$. This implies that on sections of $\ce(w)$ we have
$$
\nabla'_a \tau =\nabla_a\tau +w\Upsilon_a \tau,
$$
while on sections of $T*M$,
$$
\nabla'_a u_b=\nabla_a u_b -\Upsilon_a u_b-  \Upsilon_b u_a 
$$
It follows at once that on $k_{a_1\cdots a_k}\in S^k T^*M(2r)$ we have 
$$
\nabla'_{(a_0} k_{a_1\cdots a_k)}=\nabla_{(a_0} k_{a_1\cdots a_k)}.
$$
Thus for $k\in S^k T^*M(2r)$ the Killing equation \eqref{hKill} is {\em
  projectively invariant} and descends to a well defined equation on
$(M,\bp)$, where $\bp=[\nabla]=[\nabla']$, the projective equivalence
class of $\nabla$.

On a general projective $n$-manifold $(M,\bp)$ there is no distinguished
connection on $TM$. However there is a projectively invariant
connection on a related rank $(n+1)$ bundle $\cT$. This is the
projective tractor connection that we now describe.

Consider the first jet prolongation
$J^1\ce(1)\to M$ of the density bundle $\ce(1)$. (See for example
\cite{Palais65} for a general development of jet bundles.)
There is a canonical bundle map called the {\em jet projection map}
$J^1\ce(1)\to\ce(1)$, which at each point is determined by the map
from 1-jets of densities to simply their evaluation at that point, and
this map has kernel $T^*M (1)$.  We write $\cT^*$, or an in an
abstract index notation $\ce_A$, for $J^1\ce(1)$ and $\cT$ or $\ce^A$
for the dual vector bundle. Then we can view the jet projection as a
canonical section $X^A$ of the bundle $\ce^A(1)$. Likewise,
the inclusion of the kernel of this projection can be viewed as a
canonical bundle map $\ce_a(1)\to\ce_A$, which we denote by
$Z_A{}^a$. Thus the jet exact sequence (at 1-jets) is written in this
notation as
\begin{equation}\label{euler}
0\longrightarrow \ce_a(1)\stackrel{Z_A{}^a}{\longrightarrow} \ce_A \stackrel{X^A}{\longrightarrow}\ce(1)\longrightarrow 0.
\end{equation}
We write $\ce_A=\ce(1)\lpl \ce_a(1)$ to summarise the composition
structure in \nn{euler} and $X^A\in \Gamma(\ce^{A}(1))$, as defined in
\nn{euler}, is called the {\em canonical tractor} or {\em position tractor}. Note the sequence \nn{ceuler} is simply the dual to \nn{euler}.

As mentioned above, 
 any connection $\nabla \in \bp$
determines a connection on $\ce(1)$. On the other hand,  by definition,
a connection on $\ce(1)$  is precisely a splitting
of the 1-jet sequence \nn{euler}. 
Thus given such a choice we have the direct sum
decomposition $\ce_A \stackrel{\nabla}{=} \ce(1)\oplus \ce_a(1) $
and we write
 \begin{equation}\label{split}
 Y_A:\ce(1) \to \ce_A \qquad \mbox{and} \qquad W^A{}_a: \ce_A\to \ce_a(1), 
 \end{equation}
for the bundle maps giving this splitting of \nn{euler}; so
$$
 X^A Y_A=1, \qquad  Z_A{}^b W^A{}_a=\delta^b_a, \qquad \mbox{and} \qquad Y_A W^A{}_a=0. 
 $$
 By definition $X$ and $Z$ are projectively invariant. The formulae
 for how $Y_A$ and $W^A_a$ transform when $\nabla$ is replaced by
 $\nabla'$, is in \eqref{trans}, is easily deduced and can be found in
 \cite{bailey-eastwood-gover94}.

\newcommand{\bX}{\mathbb{X}}

With
respect to a splitting \nn{split}
 we define a connection on $\cT^*$ by
\begin{equation}\label{pconn}
\nabla^{\mathcal{T}^*}_a \binom{\si}{\mu_b}
:= \binom{ \nabla_a \si -\mu_a}{\nabla_a \mu_b + \Rho_{ab} \si}.
\end{equation}
Here $\Rho_{ab}$ is the projective Schouten tensor of $\nabla\in \bp$, as introduced earlier.
It turns out that \nn{pconn} is
independent of the choice $\nabla \in \bp$, and so
$\nabla^{\mathcal{T}^*}$ is determined canonically by the projective
structure $\bp$. We have followed the construction of \cite{bailey-eastwood-gover94,CapGoverMacbeth14}, but as mentioned in those sources
this {\em cotractor connection} is due to  T.Y. Thomas. Thus we shall
also term $\cT^*=\ce_A$ the {\em cotractor bundle}, and we note the dual
{\em tractor bundle} $\cT=\ce^A$ 
has canonically the dual {\em tractor connection}: in terms of a
splitting dual to that above this is given by
\begin{equation}\label{tconn}
\nabla^\cT_a \left( \begin{array}{c} \nu^b\\
\rho
\end{array}\right) =
\left( \begin{array}{c} \nabla_a\nu^b + \rho \delta^b_a\\
\nabla_a \rho - \Rho_{ab}\nu^b
\end{array}\right).
\end{equation}
Note that given a choice of $\nabla\in \bp$, by coupling with the
tractor connection we can differentiate tensors taking values in
tractor bundles and also weighted tractors. In particular
we have 
\begin{equation}\label{trids}
\nabla_aX^B=W^B{}_a,  \quad \nabla_a W^B{}_b=-\Rho_{ab} X^A  , \quad  \nabla_a Y_B =\Rho_{ab}Z_B{}^b,  \quad\mbox{and}\quad  \nabla_a Z_B{}^b = -\delta^b_a Y_B.
  \end{equation}

The curvature of the tractor connection is given by  
\begin{equation}\label{tractor_curvature}
\kappa_{ab}{}{}^C{}_{D}=W_{ab}{}^c{}_d W^C{}_c Z_{D}{}^d-C_{abd}Z_{D}{}^d X^C,
\end{equation}
where $W_{ab}{}^c{}_d$ is the projective Weyl curvature, as above, and 
\begin{equation}\label{Cotton}
C_{abc}:= \nabla_a \Rho_{bc}-\nabla_b \Rho_{ac}
\end{equation} 
is called the {\em
  projective Cotton tensor}.

\newcommand{\cV}{\mathcal{V}}
\newcommand{\cU}{\mathcal{U}}
The projective Thomas-D operator is a first order projectively
invariant differential operator, or more  accurately family of such operators.
Given any tractor bundle $\cV$ (including the trivial bundle $\ce$) and any $w\in \mathbb{R}$ it provides an operator on the weighted tractor bundle $\cV(w)$
$$
\D: \cV(w) \to \cT^*\otimes \cV(w-1)
$$
given by
\begin{equation}\label{TD}
\D_A V= w Y_A V + Z_A{}^a\nabla_a V ,
\end{equation}
where $\nabla_a$ is the connection induced 
on the weighted bundle $\mathcal V$ from the tractor connection $\nabla_a^{\cT^*}$ and the connection on $\mathcal E(1)$ coming from a representative in $\bp$. 
Note that from this definition and \nn{trids}  follows
\begin{equation}\label{DX}
\D_A X^B= \delta^B_A, \qquad \mbox{and} \qquad X^A \D_A V =w V,  
\end{equation}
for $V\in \Gamma(\cV(w) )$.
Also from the definition it follows that $\D$ satisfies a Leibniz
rule, in that if $\cU(w)$ and $\cV(w')$ are tractor (or density)
bundles of weights $w$ and $w'$, respectively then for sections $U\in
\Gamma(\cU(w))$ and $V\in \cV(w')$ we have
$$
\D(U\otimes V)= (\D U) \otimes V+ U\otimes \D V.
$$
Thus from \nn{DX}, when commuting  $\D_A$ with the tensor product with $X^B$, we get the commutator identity
\begin{equation}\label{[DX]}
  [\D_A, X^B]=\delta_A{}^B .
  \end{equation}

In view of the last property, as an operator on weighted tractor fields,
the commutator $[\D_A,\D_B]$ is a ``curvature'' in that it acts algebraically. We will treat it this way by writing,
\begin{equation}\label{Wdef}
[\D_A,\D_B] V^C=W_{AB}{}^C{}_D V^D
\end{equation}
for its action on $V\in \gamma(\cT(w))$. For this reason and for convenience we will refer to $W_{AB}{}^C{}_D$ as the $W$-curvature.
Investigating this,   consider $\D$ on projective densities $\tau\in \Gamma(\ce(w))$
to form $\D_B\tau$. Using \nn{trids} we have
\begin{align*}
  \D_A\D_B\tau & =  (w-1)Y_A \D_B \tau + Z_A{}^a\nd_a \D_B\tau \\
  &= w(w-1)Y_AY_B\tau + 2(w-1)Y_{(A} Z^b_{B)}\nd_b\tau + Z^a_AZ^b_B \nd_a\nd_b\tau ,
\end{align*}
which we note is symmetric. Phrased alternatively, we have on sections
of density bundles
\begin{equation}\label{Dcomm}
[\D_A,\D_B]\tau=0 . 
\end{equation}
So $\D$ is ``torsion free'' in this sense, and from the Jacobi identity
we have at once the Bianchi identities
\begin{equation}\label{B12}
W_{[AB}{}^C{}_{D]}=0
\qquad \mbox{and}\qquad
\D_{[A}W_{BC]}{}^E{}_F=0 .
\end{equation}
To compute $W_{AB}{}^C{}_D$ it suffices to act on a section  $V\in \Gamma(\cT)$.
Note from \nn{trids}
$$
\D_A\D_B V^C = -Y_{A}\D_B V^C -Y_{B}\D_A V^C +  \Zp{A}{a}\Zp{B}{b} \nd_a\nd_b V^C .
$$
Thus
\begin{equation}\label{Wexp}
W_{AB}{}^C{}_D= \Zp{A}{a}\Zp{B}{b}\kappa_{ab}{}^C{}_D ,
\end{equation}
where $\kappa$ is the tractor curvature given above, and in particular
\begin{equation}\label{XW0}
X^AW_{AB}{}^C{}_D=X^BW_{AB}{}^C{}_D=X^DW_{AB}{}^C{}_D=0,
\end{equation}
as well as
\begin{equation}\label{ZW}
Z_C{}^c W_{AB}{}^C{}_D=Z_A{}^aZ_B{}^bZ_D{}^d W_{ab}{}^c{}_d,
\quad
Y_C W_{AB}{}^C{}_D=-Z_A{}^aZ_B{}^bZ_D{}^d C_{abd}
\end{equation}
The action of the W-tractor, as on the right hand side of \eqref{Wdef},
extends to tensor products of $\cT$ and $\cT^*$ by the Leibniz rule
and we use the shorthand $W_{AB}\sharp$ for this. For example, for any
(possibly weighted) 2-cotractor field $T_{CD}$ we have
$$
W_{AB}\sharp T_{CD}= -W_{AB}{}^E{}_C T_{ED} -W_{AB}{}^E{}_D T_{CE}.
$$

\begin{remark}
The $W$-curvature $W_{AB}{}^C{}_D$ satisfies, of course, stronger
properties if the projective structure includes the Levi-Civita
connection of a metric. An interesting case is when, in particular,
the metric is Einstein but not scalar flat, as in this case there
there is a parallel (non-degenerate) metric on the projective tractor
bundle. This can be used to raise and lower tractor indices \cite{CapGoverMacbeth14}
and it follows easily that that the $W$-curvature $W_{AB}{}^C{}_D$ has
the same algebraic symmetries as a conformal Weyl tensor. This is
potentially important for applications, but we will not exploit these
observations in the current work.
\end{remark}

\subsection{Young diagrams and some algebra}\label{alg}

\newcommand{\V}{\mathbb{V}} \newcommand{\R}{\mathbb{R}}
\newcommand{\N}{\mathbb{N}} For a real vector space $\V$ of dimension
$N$ we consider irreducible representations of $SL(\V)\cong
SL(N,\mathbb{R})$ within $\otimes^{m}\V^*$ for $m\in \mathbb{Z}_{\geq
  0}$. Up to isomorphism, these are classified by Young diagrams
\cite{Fulton97,fulton-harris} and we assume an elementary familiarity with this
notation. Each diagram is (equivalent to) a weight $(a_1,a_2,\cdots ,a_{N})$ where $m
\geq a_1\geq \ldots \geq a_{N} \geq 0$ with $\sum_{i=1}^{k}a_i=m$. We
usually omit terminal strings of 0, strictly after $a_1$, that is for
$s\geq 2$ we usually omit $a_s$ from the list if $a_s=0$.  In
particular the trivial representation of $SL(\V)$ on $\R$ (so $m=0$)
will be denoted $(0)$ rather than $(0,\cdots ,0)$ and the dual of the
defining (or fundamental) representation of $SL(\V)$ on $\V^*$ (so
$m=1$) will be denoted $(1)$ rather than $(1,0,\cdots ,0)$. Given this
notation for weights the representation space for the representation
$(a_1,\cdots,a_h)$ will usually be denoted $\V_{(a_1,\cdots ,a_h)}$,
or by the weight $(a_1,\cdots,a_h)$, simply, if $\V$ is
understood. We will term $h$ the {\em height} of the diagram.

In fact for our current purposes we shall only need the Young diagrams
of height at most 2, and $\V$ will be $\R^{n+1}$ with it standard
representation of $SL(n+1,\R)$.  The symmetric representations
$S^m\V^*$ have the diagram $(m)$, while $(k,\ell)$ with $k +\ell
=m\geq 1$, $k\geq \ell\geq 1$, can be realised by tensors $T_{B_1\ldots
  B_{k} C_1 \ldots C_\ell}$ on $\V$ which are symmetric in the
$B_i$'s, also symmetric in the $C_i$'s, and such that symmetrisation over
the first (equivalently any) $k+1$ indices
vanishes: 
\begin{equation}\label{Ychar}
T_{B_1\ldots
  B_{k} C_1 \ldots C_\ell}= T_{(B_1\ldots
  B_{k}) (C_1 \ldots C_\ell)} \quad \mbox{and} \quad T_{(B_1\ldots
  B_{k} C_1)C_2 \ldots C_\ell}=0 .
\end{equation}


In this article we will call these particular realisations {\em
  Young symmetries} and $\V_{(k,\ell)}$ will mean the $SL(\V)$-submodule
of $\otimes^m \V$ consisting of tensors on $\V$ with these Young
symmetries.

The key algebraic fact we need is then the following.
\begin{prop}\label{key-alg}
  The map of $SL(\V)$ representations
  \begin{equation}\label{main-map}
 \V_{(r+1)}\otimes \V_{(r)} \to \V_{(r)} \otimes  \V_{(r+1)}
\end{equation}
given by
$$
T_{B_1\ldots
  B_rB_{r+1} C_1 \ldots C_r}\mapsto T_{B_1\ldots
  B_r(B_{r+1} C_1 \ldots C_r)} 
$$
is an isomorphism.
  \end{prop}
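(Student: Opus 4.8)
The plan is to exhibit an explicit inverse to the symmetrisation map \eqref{main-map}, or at least to show the map is injective and that the two sides have equal dimension. Since both $\V_{(r+1)}\otimes\V_{(r)}$ and $\V_{(r)}\otimes\V_{(r+1)}$ are the same tensor product of two fixed irreducibles (just written in the opposite order), they are isomorphic as $SL(\V)$-modules a priori; so it suffices to prove that the particular map given by $T_{B_1\ldots B_rB_{r+1}C_1\ldots C_r}\mapsto T_{B_1\ldots B_r(B_{r+1}C_1\ldots C_r)}$ is injective (equivalently surjective). First I would set up notation: write $S$ for the source module, realised via Young symmetries as in \eqref{Ychar} — so $T$ is symmetric in $B_1,\ldots,B_{r+1}$, symmetric in $C_1,\ldots,C_r$, and $T_{(B_1\ldots B_{r+1}C_1)C_2\ldots C_r}=0$; and write the target the same way with the roles of the two blocks of sizes $r$ and $r+1$ interchanged.

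For injectivity I would argue as follows. Suppose $T$ lies in the kernel, i.e.\ $T_{B_1\ldots B_r(B_{r+1}C_1\ldots C_r)}=0$: the symmetrisation of $T$ over its last $r+1$ indices vanishes. Combined with the Young symmetry already built into the source, this says $T$ is killed by symmetrisation over \emph{both} the last $r+1$ indices and (by hypothesis on $S$) contains no component obtained by symmetrising over the first $r+1$. The cleanest way to finish is a representation-theoretic one: decompose $\V_{(r+1)}\otimes\V_{(r)}$ into irreducibles via the Littlewood--Richardson / Pieri rule. For height-$\le 2$ diagrams one gets $\V_{(r+1)}\otimes\V_{(r)}\cong\bigoplus_{j=0}^{r}\V_{(2r+1-j,\,j)}$ (with the convention that the $j=0$ term is $\V_{(2r+1)}=S^{2r+1}\V^*$), and each summand occurs with multiplicity one. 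The map \eqref{main-map} is $SL(\V)$-equivariant, so by Schur's lemma it acts as a scalar on each isotypic component; injectivity therefore reduces to checking that none of these scalars is zero. To see each scalar is nonzero it is enough to exhibit, for each $j$, a single tensor in the $\V_{(2r+1-j,j)}$-component of the source whose image is nonzero — and the obvious candidates are the highest-weight vectors, built from a partial symmetrisation/antisymmetrisation applied to products of basis covectors; evaluating the symmetrisation over the last $r+1$ slots on such a vector visibly does not annihilate it.

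The main obstacle, and the step I would spend the most care on, is the multiplicity-one decomposition and the non-vanishing check: one must be sure that the Young projector conventions in \eqref{Ychar} are consistent with the Pieri rule being applied, and that the ``partial symmetrisation'' map does not accidentally kill the lowest piece $\V_{(r,1)}$-type component (the boundary cases $j=r$ and $j=0$ are the ones most prone to a vanishing coincidence). An alternative, more hands-on route that avoids representation theory entirely: construct the inverse directly by a symmetrisation-correction formula. Given $U\in\V_{(r)}\otimes\V_{(r+1)}$ (symmetric in the first $r$ indices $B_1\ldots B_r$, symmetric in the last $r+1$ indices, with the appropriate Young relation), define $T_{B_1\ldots B_{r+1}C_1\ldots C_r}$ by symmetrising $U$ over $B_1\ldots B_{r+1}$ and then subtracting the unique multiple of the source-side Young projector needed to land back in $\V_{(r+1)}\otimes\V_{(r)}$; one then checks $T\mapsto U$ under \eqref{main-map}. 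This produces an explicit two-sided inverse and hence the isomorphism, at the cost of a short but fiddly combinatorial computation of the correction coefficient (which will be a ratio of binomial-type quantities in $r$ and $n$, and in particular one must verify the denominator never vanishes for $n\ge 2$).
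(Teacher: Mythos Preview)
Your approach is essentially the same as the paper's: decompose $\V_{(r+1)}\otimes\V_{(r)}$ via the Littlewood--Richardson/Pieri rule into multiplicity-one irreducibles $\bigoplus_{j=0}^{r}\V_{(2r+1-j,j)}$ and then use Schur's lemma to reduce to checking the map is nonzero on each summand --- the paper phrases this last step as ``each projection to such a component may be factored through the map'', invoking Young-symmetry properties from standard references rather than your highest-weight-vector check, but the content is the same. One minor slip in your setup: an element of $\V_{(r+1)}\otimes\V_{(r)}$ does \emph{not} satisfy $T_{(B_1\ldots B_{r+1}C_1)C_2\ldots C_r}=0$ --- that extra relation would cut down to the irreducible $\V_{(r+1,r)}$ rather than the full tensor product --- but since you abandon that direct route for the representation-theoretic one, it does not affect your main argument.
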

\begin{proof}
  This is an straightforward consequence of the well known Littlewood-Richardson rules
  for decomposing the tensor product $U_{C_1\cdots C_r}\otimes
  V_{B_1\cdots B_{r+1}}\in \V_{(r)}\otimes \V_{(r+1)}$ into its direct
  sum of irreducible parts, and then the properties of these irreducibles in terms of Young symmetries as explained in \cite{Fulton97,fulton-harris,penroserindler1}.  Each of the summands is a
  representation equivalent to either $\V_{(2k+1)}$ or
  $\V_{(k,\ell)}$, with $\ell\geq 1$, $k+\ell=2r+1$, and each
  projection to such a component may be factored through the map
  \nn{main-map}.
    \end{proof}

This yields the following consequence.
\begin{cor}\label{conseq}
For $r\in \mathbb{Z}_{\geq 1}$ and $k\geq \ell\geq 1$ with $k+\ell=r+1$, 
  $$
(\V_{(r+1)}\otimes \V_{(r)} )\cap (\V_{(r)}\otimes \V_{(k,\ell)})=\{0\}.
  $$
  \end{cor}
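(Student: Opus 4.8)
The plan is to deduce Corollary \ref{conseq} from Proposition \ref{key-alg} by contradiction, reading the intersection inside the tensor product $\V_{(r+1)}\otimes\V_{(r)}$ via the symmetrisation map. Suppose $T$ is a nonzero element of $(\V_{(r+1)}\otimes\V_{(r)})\cap(\V_{(r)}\otimes\V_{(k,\ell)})$ with $k+\ell=r+1$, $k\geq\ell\geq 1$. Then $T$ lies in $\V_{(r+1)}\otimes\V_{(r)}$, so I would feed it into the isomorphism \eqref{main-map}. The first step is to spell out what membership in $\V_{(r)}\otimes\V_{(k,\ell)}$ costs: writing the indices of the $\V_{(r)}$ factor as $A_1\ldots A_r$ and those of the $\V_{(k,\ell)}$ factor as $B_1\ldots B_kC_1\ldots C_\ell$, the component tensor satisfies the Young relations \eqref{Ychar} in the $B$'s and $C$'s (symmetry plus vanishing of the symmetrisation over any $k+1$ of them), in addition to being totally symmetric in the $A$'s.

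The crucial observation I would make is an estimate on the length of the first row of any Young diagram occurring in $\V_{(r)}\otimes\V_{(k,\ell)}$: by the Littlewood–Richardson rule, adding $\V_{(r)}$ (a single row of length $r$) to the diagram of $\V_{(k,\ell)}$ (two rows, of lengths $k$ and $\ell$) produces diagrams of height at most $3$, and the first row has length at most $k+r=r+1+r-\ell\le 2r$ — but more to the point, no summand of $\V_{(r)}\otimes\V_{(k,\ell)}$ is equivalent to $\V_{(2r+1)}$ (the first row can have length at most $k+r<2r+1$ since $\ell\geq 1$), and none of the height-$2$ summands $\V_{(k',\ell')}$ of $\V_{(r)}\otimes\V_{(k,\ell)}$ can appear in $\V_{(r+1)}\otimes\V_{(r)}$ either, because the proof of Proposition \ref{key-alg} identifies the summands of $\V_{(r)}\otimes\V_{(r+1)}$ precisely as $\V_{(2r+1)}$ together with certain $\V_{(k,\ell)}$ with $k+\ell=2r+1$; I would check that the multisets of constituents of $\V_{(r)}\otimes\V_{(r+1)}$ and of $\V_{(r)}\otimes\V_{(k,\ell)}$ (for the relevant $k,\ell$) are disjoint. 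Given that disjointness, the intersection of the two submodules of $\otimes^{2r+1}\V$ is forced to be zero, since a common element would decompose into isotypic pieces that must simultaneously lie in both, hence in the intersection of their constituent sets, which is empty.

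Concretely, the cleanest route is probably this: apply the isomorphism \eqref{main-map} to the $\V_{(r+1)}\otimes\V_{(r)}$ structure, i.e. consider the map $T\mapsto T_{B_1\ldots B_r(B_{r+1}C_1\ldots C_r)}$, where I relabel so that the $\V_{(r+1)}$-slots are $B_1\ldots B_{r+1}$ and the $\V_{(r)}$-slots are $C_1\ldots C_r$. Membership in $\V_{(r)}\otimes\V_{(k,\ell)}$ says that, under a different grouping of the $2r+1$ indices — namely $r$ symmetric ones from the $\V_{(r)}$ factor, then a $(k,\ell)$-Young block — $T$ satisfies the skew-type relation that symmetrising over $k+1$ of the $\V_{(k,\ell)}$-indices kills it. I would show this relation, combined with the $\V_{(r+1)}\otimes\V_{(r)}$ Young symmetry (symmetrising over any $r+2$ of the first group of indices vanishes, in the realisation before applying \eqref{main-map}), forces the image under \eqref{main-map} to vanish; since \eqref{main-map} is injective, $T=0$.

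The main obstacle I anticipate is bookkeeping the two incompatible index groupings cleanly — $T$ carries a ``$\V_{(r+1)}$ in front, $\V_{(r)}$ behind'' symmetry type in one presentation and a ``$\V_{(r)}$ in front, $\V_{(k,\ell)}$ behind'' type in another, and these refer to the same $2r+1$ tensor slots only after a chosen identification. The safe way to handle this is representation-theoretically rather than by direct index manipulation: compute the constituent lists of both $SL(\V)$-modules via Littlewood–Richardson (using that every constituent of $\V_{(r+1)}\otimes\V_{(r)}$ has first row of length exactly $\geq r+1$ with a very restricted shape, as in the proof of Proposition \ref{key-alg}, while every constituent of $\V_{(r)}\otimes\V_{(k,\ell)}$ has height $\geq 2$ coming from a diagram strictly narrower in its first row), observe the lists are disjoint, and conclude the intersection of the two submodules inside $\otimes^{2r+1}\V$ is $\{0\}$. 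This avoids all explicit symmetrisation computations and reduces the corollary to a short combinatorial comparison of Young diagrams.
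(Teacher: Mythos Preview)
Your ``safe'' route via disjoint constituent lists has a genuine error: the Littlewood--Richardson constituents of $\V_{(r+1)}\otimes\V_{(r)}$ and of $\V_{(r)}\otimes\V_{(k,\ell)}$ are \emph{not} disjoint. Already for $r=2$, $(k,\ell)=(2,1)$, Pieri gives
\[
\V_{(3)}\otimes\V_{(2)}\ \cong\ \V_{(5)}\oplus\V_{(4,1)}\oplus\V_{(3,2)},
\qquad
\V_{(2)}\otimes\V_{(2,1)}\ \cong\ \V_{(4,1)}\oplus\V_{(3,2)}\oplus\V_{(3,1,1)}\oplus\V_{(2,2,1)},
\]
so $\V_{(4,1)}$ and $\V_{(3,2)}$ appear in both. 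Disjointness of isotypic content would indeed force zero intersection, but the premise fails; the corollary is true precisely because the \emph{particular} copies of these irreducibles inside $\otimes^{2r+1}\V^*$ are different, something your abstract comparison cannot detect.

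The approach you sketch and then abandon---feeding $T$ into \eqref{main-map} and using injectivity---is exactly what the paper does, and it is much simpler than you anticipate. There is no bookkeeping of two incompatible index-groupings: the map \eqref{main-map} is just symmetrisation over the last $r+1$ slots of $\otimes^{2r+1}\V^*$. For $T\in\V_{(r)}\otimes\V_{(k,\ell)}$, those last $r+1$ slots carry an element of $\V_{(k,\ell)}$ with $\ell\geq 1$, which lies in the kernel of the full symmetrisation $\otimes^{r+1}\V^*\to\V_{(r+1)}$ (any non-symmetric irreducible does). Hence the image of $T$ under \eqref{main-map} is zero. Since \eqref{main-map} is injective on $\V_{(r+1)}\otimes\V_{(r)}$ by Proposition~\ref{key-alg}, $T=0$. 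You do not need to combine the $(k,\ell)$-relation with the $\V_{(r+1)}\otimes\V_{(r)}$ symmetry; the former alone kills the image, the latter alone gives injectivity.
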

\begin{proof}
   The irreducible components of $\otimes^{r+1} \V^*$ isomorphic to $\V_{(k,\ell)}$, with $k\geq \ell\geq 1$ and
   $k+\ell=r+1$ all lie in the kernel of the map
  \begin{equation}\label{prjn}
\otimes^{r+1} \V^* \to  \V_{(r+1)} 
  \end{equation}
  However from the Proposition \ref{key-alg} the kernel of the map \nn{main-map} is trivial.
\end{proof}

In fact the kernel of \eqref{key-alg} is spanned by the irreducible components of $\otimes^{r+1} \V^*$ isomorphic to $\V_{(k,\ell)}$, with $k\geq \ell\geq 1$ and
   $k+\ell=r+1$. Thus 
it is clear that in fact the Corollary~\ref{conseq} is equivalent to the
Proposition~\ref{key-alg}. Thus it is interesting to prove this
directly. We present this here, since for our later purposes this will
be useful.

Another fact that will be useful is the following.
\begin{lemma}\label{sym-lem}
  Suppose that $T_{B_1\cdots B_rC_1\cdots C_r}=T_{(B_1\cdots B_r)(C_1\cdots C_r)}\in \V_{(r,r)}$. Then
  \begin{equation}\label{pairwise}
  T_{B_1\cdots B_rC_1\cdots C_r}=(-1)^r T_{C_1\cdots C_rB_1\cdots B_r} .
  \end{equation}
\end{lemma}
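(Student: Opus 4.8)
The plan is to exploit the two defining Young symmetries in \eqref{Ychar}: symmetry within each block of indices, plus the vanishing of the symmetrisation over any $k+1 = r+1$ indices. The statement to prove is a ``swap the two blocks'' identity with a sign $(-1)^r$, so the natural strategy is an induction on $r$ that moves one index at a time from the front block to the back block, tracking signs. Concretely, I would set up an auxiliary induction on the number of indices transposed: starting from $T_{B_1\cdots B_r C_1\cdots C_r}$, I would use the trace-free (over-symmetrisation vanishing) condition in the form $T_{(B_1\cdots B_r C_1)C_2\cdots C_r}=0$ to re-express a symmetrised index as a sum of terms where that index has been pushed into the other block, and iterate.

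The key computational step is the following manipulation. Expanding $T_{(B_1\cdots B_r C_1)C_2\cdots C_r}=0$, the symmetrisation over the $r+1$ indices $B_1,\dots,B_r,C_1$ produces, up to the combinatorial factor $\tfrac{1}{r+1}$, the term $T_{B_1\cdots B_r C_1\cdots C_r}$ (all $B$'s in place) plus $r$ terms in which exactly one $B_i$ has been swapped with $C_1$. By the symmetry in the $B$-block all these $r$ swapped terms are equal, so one obtains
\begin{equation*}
T_{B_1\cdots B_r C_1 C_2\cdots C_r} = -\,r\,T_{B_1\cdots B_{r-1}C_1\,B_r\,C_2\cdots C_r},
\end{equation*}
i.e.\ a single $B$–$C_1$ transposition costs a factor $-r$ but also must be ``re-symmetrised'' in the $C$-block. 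Iterating this — at each stage moving the next $C$-index leftward past the remaining $B$-block using the corresponding instance of the over-symmetrisation identity applied to an appropriate index set — after $r$ applications every $B_i$ has been exchanged with a $C_j$, and the accumulated sign and combinatorial factors must be shown to collapse to exactly $(-1)^r$ after accounting for the re-symmetrisations forced by \eqref{Ychar}. An alternative, cleaner route is representation-theoretic: the operator ``swap the two $S^rV^*$ blocks'' is an $SL(\V)$-endomorphism of $\V_{(r,r)}\subset S^rV^*\otimes S^rV^*$, which is irreducible, so by Schur's lemma it acts as a scalar $\lambda$; squaring the swap gives $\lambda^2=1$, and one pins down $\lambda=(-1)^r$ by evaluating on a single explicit decomposable tensor (e.g.\ built from $r$ generic covectors), where the sign is visible directly from the antisymmetrisation implicit in the $(r,r)$-projection.

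The main obstacle is bookkeeping in the direct (index-pushing) approach: each transposition identity is only valid before one has disturbed the block symmetries, so one must carefully order the moves and keep the partially-moved indices symmetrised correctly, and then verify that the product of the per-step factors $(-r), (-(r-1)),\dots$ together with the normalising $1/(r+1)$-type factors telescopes to the clean answer $(-1)^r$ with no stray multiplicative constant. For this reason I expect the Schur's lemma argument to be the safer one to write down: the only nontrivial input is irreducibility of $\V_{(r,r)}$ (standard) and a single sign computation on a decomposable element, which sidesteps all the combinatorics. I would present the representation-theoretic proof as the main argument, perhaps remarking that the identity can also be checked by the elementary index manipulation above.
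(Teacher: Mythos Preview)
Your index-pushing approach contains a concrete error at the very first step. When you expand $T_{(B_1\cdots B_r C_1)C_2\cdots C_r}=0$ using the symmetry of $T$ in its first $r$ slots, the $r$ ``swapped'' terms are \emph{not} all equal: the $i$-th of them is $T_{B_1\cdots \widehat{B_i}\cdots B_r C_1\,|\,B_i C_2\cdots C_r}$, and these differ in \emph{which} label $B_i$ has been moved into the second block. The symmetry in the $B$-block only lets you put $C_1$ in a canonical position among the first $r$ slots; it does not identify the $r$ terms. So the displayed identity $T_{B_1\cdots B_r C_1 C_2\cdots C_r} = -r\,T_{B_1\cdots B_{r-1}C_1\,B_r\,C_2\cdots C_r}$ is false as an abstract-index equation. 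The correct expansion is a genuine sum of $r$ distinct terms, and iterating it does eventually yield the result, but the bookkeeping is considerably worse than you indicate (the intermediate tensors no longer have the full $(r,r)$ Young symmetry, so you cannot simply reapply the same identity).

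Your Schur's lemma route is sound in outline --- the swap is an $SL(\V)$-intertwiner on the (absolutely) irreducible $\V_{(r,r)}$, hence $\pm\mathrm{id}$ --- but you have not actually pinned down the sign, and your hint that it is ``visible directly from the antisymmetrisation implicit in the $(r,r)$-projection'' is exactly what needs to be made precise. The paper does this in one line, and in fact bypasses Schur entirely: it writes the projector explicitly as
\[
P_{(r,r)} = S_{(1,\ldots,r)}\circ S_{(r+1,\ldots,2r)}\circ S_{[1,r+1]}\circ\cdots\circ S_{[r,2r]},
\]
i.e.\ $r$ pairwise column antisymmetrisations followed by the two row symmetrisations. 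The block swap is the product of the $r$ commuting transpositions $(i,r{+}i)$; each transposition anticommutes with its own antisymmetriser $S_{[i,r+i]}$ and commutes with everything else in the formula, so the swap acts on $\operatorname{im}P_{(r,r)}$ as $(-1)^r$. This is both the missing sign computation for your Schur argument and a complete proof on its own, with no combinatorics or test elements required.
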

\begin{proof}
The projector  $P_{(r,r)}:\otimes^{2r}\V^*\to \V_{(r,r)}$ is given by  
\begin{equation}\label{DY}
P_{(r,r)} T= S_{(1,\ldots , r)}\circ  S_{(r+1,\ldots , 2r)}\circ S_{[1,r+1]}\circ \cdots \circ S_{[r,2r]}(T),
\end{equation}
where $S_{(1\ldots r)}$ denotes symmetrisation over  the first $r$ indices, $S_{(r+1,\ldots , 2r)}$ denotes symmetrisation over  the last $r$ indices,
 $S_{[i,j]}$ denotes  anti-symmetrisation over the two indices in, respectively, the $i^{\rm th}$ and $j^{\rm th}$ positions.

The claim in the Lemma is an immediate consequence.
  \end{proof}

In the following we extend these conventions, notations, and
definitions to vector bundles (with fibre $\V$) in the
obvious way.

\section{Killing equations: prolongation via the tractor connection} \label{main}

Here we treat the Killing type equations 
\begin{equation}\label{kill}
\nd_{(a_0}k_{a_1\cdots a_r)}=0 ,
\end{equation}
on an affine manifold with an affine connection $\nd$. For simplicity
we assume this is torsion free, but this plays almost no
role. 
There is such an equation for
each $r\in \mathbb{Z}_{>0}$ and as discussed above the equations are each projectively
invariant if we take the symmetric rank $r$ tensor to have projective
weight $2r$, i.e.  $k_{b\cdots c}\in \Gamma(\ce_{(b\cdots c)}(2r))$.
In the following,  we denote by $\cT_{(k,\ell)}$ the tractor bundle with fibre $\V_{(k,\ell)}$ where $\V=\R^{n+1}=\cT|_p$. Moreover we include the weight $w$ in the notation as $\cT_{(k,\ell)}(w)$.

Via the cotractor filtration sequence \nn{euler} we evidently have the following. 
\begin{lemma}\label{incl-lemma}
There is a projectively invariant bundle inclusion 
$$
S^r T^*M (2r)\to S^r\cT^* (r)=\cT_{(r)}(r)
$$
given by 
\begin{equation}\label{Kdef}
S^r T^*M (2r) \ni k_{b\cdots c}\mapsto K_{B\cdots C}:=\Zp{B}{b}\cdots \Zp{C}{c} k_{b\cdots c}\in \cT_{(r)}(r).
\end{equation}
\end{lemma}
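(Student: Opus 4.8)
The plan is to check the three assertions packed into the statement: that the formula \eqref{Kdef} produces a section of the claimed bundle and weight, that the resulting map is injective, and that it is projectively invariant. All three reduce to bookkeeping with the canonical maps $X$ and $Z$ of the jet sequence \eqref{euler}, so I expect no genuine difficulty.

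First I would track weights and symmetry. By definition $Z_A{}^a$ is the bundle map $\ce_a(1)\to\ce_A$ of \eqref{euler}, hence a section of $\ce_A{}^a(-1)$, so each of the $r$ factors in \eqref{Kdef} carries weight $-1$; contracting them against $k_{b\cdots c}\in\Gamma(\ce_{(b\cdots c)}(2r))$ therefore yields a section of weight $2r-r=r$. Since $k_{b\cdots c}$ is totally symmetric in its (Latin) indices and the $Z$ factors commute, $K_{B\cdots C}$ is totally symmetric in $B\cdots C$, i.e.\ lies in $\Gamma(S^r\cT^*(r))$; and in the Young-diagram conventions of Section~\ref{alg} the height-one diagram $(r)$ is precisely $S^r\cT^*$, so indeed $K\in\Gamma(\cT_{(r)}(r))$.

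Projective invariance is then automatic: as recalled after \eqref{split}, $X^A$ and $Z_A{}^a$ are canonically attached to the projective structure and involve no choice of $\nabla\in\bp$, so the $r$-fold product $Z_B{}^b\cdots Z_C{}^c$, and with it the map \eqref{Kdef}, is independent of that choice. For injectivity I would fix a representative $\nabla\in\bp$ and use the splitting map $W^A{}_a$ of \eqref{split}, which satisfies $Z_A{}^b W^A{}_a=\delta^b_a$: contracting \eqref{Kdef} with $W^B{}_b\cdots W^C{}_c$ returns $k_{b\cdots c}=W^B{}_b\cdots W^C{}_c K_{B\cdots C}$, an explicit left inverse (which may depend on $\nabla$, though the map \eqref{Kdef} does not). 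The only point demanding any care is the weight count together with the identification $\cT_{(r)}(r)=S^r\cT^*(r)$, and both are fixed by the conventions already in force in Sections~\ref{back} and~\ref{alg}.
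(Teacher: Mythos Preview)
Your proof is correct and spells out precisely the details that the paper leaves implicit: the paper offers no proof of this lemma, stating only that it follows ``evidently'' from the cotractor filtration sequence \eqref{euler}. Your weight count, symmetry check, projective invariance of $Z_A{}^a$, and the left inverse via $W^A{}_a$ are exactly the verifications that justify that claim.
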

\noindent Note that for $K$ as here we have
\begin{equation}\label{Xorth}
  X^BK_{B \cdots C}=0.
\end{equation}
Moreover if $K\in \cT_{(r)}(r)$ satisfies \nn{Xorth} then it is in
the image of \nn{Kdef}.

This enables a tractor interpretation of the Killing type equations,
as follows.
\begin{prop}\label{pp}
For each rank $r$ the equation (\ref{kill}) is equivalent to the
tractor equation
\begin{equation}\label{tkill}
\D_{(A}K_{B\cdots C)}=0,
\end{equation}
where $K_{B\cdots C}$ is given by \nn{Kdef}.
\end{prop}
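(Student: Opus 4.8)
The plan is to compute $\D_{(A}K_{B\cdots C)}$ directly in a chosen splitting $\nd\in\bp$ and show that all its components vanish precisely when \nn{kill} holds. First I would apply the definition \nn{TD} of the Thomas-D operator to $K_{B\cdots C}=\Zp{B}{b}\cdots\Zp{C}{c}k_{b\cdots c}$, which has weight $w=r$, so that
\begin{equation*}
\D_A K_{B\cdots C}= r\,Y_A K_{B\cdots C}+ Z_A{}^a\nd_a K_{B\cdots C}.
\end{equation*}
Then I would expand $\nd_a K_{B\cdots C}$ using the Leibniz rule together with \nn{trids}, specifically $\nd_a\Zp{B}{b}=-\delta_a^b Y_B$; this produces one term $\Zp{B}{b}\cdots\Zp{C}{c}\nd_a k_{b\cdots c}$ (the ``top'' slot) and $r$ terms in which one $Z$ has been replaced by a $Y$ contracted against $k$. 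After symmetrising over all of $A,B,\dots,C$, these latter terms combine with the $r\,Y_{(A}K_{B\cdots C)}$ term, and the count works out so that they cancel identically — this is the key bookkeeping step and essentially the reason the weight was chosen to be $2r$ (equivalently $r$ on the tractor side). What survives is
\begin{equation*}
\D_{(A}K_{B\cdots C)}= Z_{(A}{}^a Z_B{}^b\cdots Z_{C)}{}^c\,\nd_a k_{b\cdots c}= Z_{(A}{}^a Z_B{}^b\cdots Z_{C)}{}^c\,\nd_{(a}k_{b\cdots c)},
\end{equation*}
where in the last step the total symmetrisation over the tractor indices forces symmetrisation over the base indices carried by the $Z$'s.

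From this identity the proposition is immediate in one direction: if \nn{kill} holds then $\nd_{(a}k_{b\cdots c)}=0$ and hence $\D_{(A}K_{B\cdots C)}=0$. For the converse I would use that the maps $\Zp{A}{a}$ are the inclusion $\ce_a(1)\to\ce_A$ of \nn{euler}, so the composite $\Zp{(A}{a}\cdots\Zp{C)}{c}$ is a (fibrewise injective) bundle map $S^{r+1}T^*M(\,\cdot\,)\to S^{r+1}\cT^*(\,\cdot\,)$; injectivity of this map — which one sees either directly from $Z_A{}^a W^A{}_b=\delta^b_a$ or by noting it is a nonzero $SL$-equivariant map out of an irreducible — lets us conclude $\nd_{(a}k_{b\cdots c)}=0$. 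An alternative and perhaps cleaner packaging of the converse is to contract $\D_{(A}K_{B\cdots C)}=0$ with $W^{A}{}_{a_0}\cdots W^{C}{}_{a_r}$ and use $Z_A{}^b W^A{}_a=\delta^b_a$; I would present whichever is shorter.

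The main obstacle I anticipate is purely combinatorial: verifying that the $Y$-terms produced by differentiating the $r$ factors $\Zp{}{}$ exactly cancel the explicit $r\,Y_{(A}K_{B\cdots C)}$ term after symmetrisation, keeping track of the symmetrisation weights correctly. This is routine but is the step where an error would break the argument, so I would do it carefully — it is really a manifestation of the identity $\D_A X^B=\delta_A^B$ from \nn{DX} applied slot by slot, combined with $X^B K_{B\cdots C}=0$ from \nn{Xorth}. Everything else (injectivity of the $Z$-inclusion, the observation that total tractor symmetrisation induces base symmetrisation through the $Z$'s) is formal and follows from the structure already set up in Section \ref{ptrac} and Lemma \ref{incl-lemma}.
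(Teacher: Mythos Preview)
Your proposal is correct and follows essentially the same route as the paper: compute $\D_{A_0}K_{A_1\cdots A_r}$ using \nn{TD} and \nn{trids}, observe that the $Y$-terms cancel upon symmetrisation, and read off $\D_{(A_0}K_{A_1\cdots A_r)}=\Zp{(A_0}{a_0}\cdots\Zp{A_r)}{a_r}\nd_{a_0}k_{a_1\cdots a_r}$. The paper simply says the result ``follows immediately'' from this identity, whereas you spell out the converse via injectivity of the $Z$-inclusion (or equivalently contraction with $W$'s); that is a harmless elaboration of the same argument.
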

\begin{proof}
  From the tractor formulae \nn{trids} and
  \nn{TD} we have
\begin{align*}
  \D_{A_0}K_{A_1\cdots A_r} = &\  rY_{A_0}K_{A_1A_2\cdots A_r} - Y_{A_1}K_{A_0A_2\cdots A_r}- \cdots
  -  Y_{A_r}K_{A_1A_2\cdots A_{r-1}A_0}\\& + \Zp{A_0}{a_0}\Zp{A_1}{a_1}\cdots \Zp{A_r}{a_r}\nabla_{a_0}k_{a_1\cdots a_r} ,
\end{align*}
which implies 
\[
\D_{(A_0}K_{A_1\cdots A_r)} = \Zp{(A_0}{a_0}\Zp{A_1}{a_1}\cdots \Zp{A_r)}{a_r}\nabla_{a_0}k_{a_1\cdots a_r} ,\]
from which the result follows  immediately.
\end{proof}
In the following $K_{A_1\cdots A_r}$ will always refer to a weight $r$
symmetric  tractor as given by by \nn{Kdef}.
We now define a projectively invariant operator 
\begin{equation}\label{Ldef}
\mathcal L: S^r T^*M (2r)\to P_{(r,r)}( \otimes^{2r}\cT^*) =  \cT_{(r,r)},
\end{equation}
where $P_{(r,r)}$ is the $(r,r)$ Young symmetry as described in
expression \nn{DY}, by applying the Young projection $P_{(r,r)}$ to
$\D^{r}K$, as follows
$$
k_{c_1\cdots c_r}\mapsto P_{(r,r)}(\D_{B_1}\cdots \D_{B_r} K_{C_1\cdots C_r}),
$$
with $K_{C_1\cdots C_r}=\Zp{C_1}{c_1}\cdots \Zp{C_r}{c_r} k_{c_1\cdots c_r}$.

\begin{prop} \label{splitprop}
The operator $\mathcal L: S^r T^*M (2r)\to \cT_{(r,r)}$ of (\ref{Ldef}) is a
differential splitting operator. 
\end{prop}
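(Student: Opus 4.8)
The plan is to show two things: first, that $\mathcal{L}$ is a differential operator from $S^rT^*M(2r)$ to $\mathcal{T}_{(r,r)}$ whose top-order part is nonzero in a controlled way, and second, that composing $\mathcal{L}$ with the natural projection $\mathcal{T}_{(r,r)} \to S^rT^*M(2r)$ (obtained by applying $Z_{C_1}{}^{c_1}\cdots Z_{C_r}{}^{c_r}$-type contractions, or equivalently the bottom slot of the tractor filtration) recovers a nonzero multiple of the identity on $S^rT^*M(2r)$. Recall that a \emph{differential splitting operator} for a subbundle $\mathcal{V} \hookrightarrow \mathcal{W}$ realised as (part of) a composition series is a differential operator $\mathcal{W} \to (\text{something})$ — here from the quotient/sub $S^rT^*M(2r)$ into the tractor bundle $\mathcal{T}_{(r,r)}$ — that splits the natural bundle surjection onto the relevant composition factor. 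So the real content is: $\mathcal{L}$ is well-defined (lands in $\mathcal{T}_{(r,r)}$, which is automatic since we applied $P_{(r,r)}$), it is projectively invariant (automatic from invariance of $\D$, $P_{(r,r)}$, and the inclusion \eqref{Kdef}), and it splits the projection.

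First I would make the projection onto the top factor explicit. The tractor bundle $\mathcal{T}_{(r,r)}$ has a composition series whose ``most projecting'' factor (with respect to the filtration induced by $X^A$) is $S^rT^*M(\cdot)$ with an appropriate weight; concretely, contracting $K_{B_1\cdots B_r C_1\cdots C_r} := \mathcal{L}(k)$ with $X^{B_1}\cdots X^{B_r}$ (or $X^{C_1}\cdots X^{C_r}$) should, up to sign and combinatorial constant, return $k_{c_1\cdots c_r}$ back. To see this I would use \eqref{DX}, namely $X^A\D_A V = wV$, together with the commutator $[\D_A, X^B] = \delta_A{}^B$ from \eqref{[DX]}, to push the $X$'s through the iterated $\D$'s in $\D_{B_1}\cdots\D_{B_r}K_{C_1\cdots C_r}$. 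Each time an $X$ passes a $\D$ it either hits $\D$ directly (giving a Kronecker delta, which then gets killed or symmetrised appropriately inside the Young projector) or eventually lands on $K_{C_1\cdots C_r}$, where $X^B K_{B\cdots C}=0$ by \eqref{Xorth} unless it pairs with a $C$-index — but the $C$-indices of $K$ are also $Z$-projected, so $X^{C_i}K_{C_1\cdots C_r}=0$ too. The net effect is that only the ``all weight-slots consumed'' term survives, producing $k$ times a nonzero rational constant depending on $r$ and the weights; one must check this constant is nonzero, which amounts to checking that successive factors $w, w-1, \ldots$ appearing via \eqref{DX} are all nonzero — here the operative weight is $r$ descending to $1$, so the product is $r!$, nonzero.

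The step I expect to be the main obstacle is verifying that the Young projector $P_{(r,r)}$ does not annihilate the relevant leading term — in other words, that $\mathcal{L}$ is not identically zero and in particular that the $X$-contraction computation above does not collapse to zero because the symmetrisations built into $P_{(r,r)}$ clash with the symmetrisations forced by the $\delta$'s from $[\D,X]$. This is where Corollary~\ref{conseq} and Proposition~\ref{key-alg} enter: the obstruction terms — the ``lower'' pieces coming from commutators and from $Y$-terms in $\D^r K$ — live in components isomorphic to $\V_{(k,\ell)}$ with $k+\ell < 2r$ or in $\V_{(r+1)}$-type pieces that, after the Young projection to $\V_{(r,r)}$, either vanish or are forced to be proportional to a shift of $k$. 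More precisely, one organises $\D_{B_1}\cdots\D_{B_r}K_{C_1\cdots C_r}$ by how many $\D$'s have ``landed'' as $\nabla$ versus produced a $Y$ or a curvature; the $Y$-heavy terms are not in the image of the $Z$-only inclusion but get removed by $P_{(r,r)}$ together with the $X$-orthogonality \eqref{Xorth}, and the genuinely differential top term survives because, by the representation theory, the projection $S^r \otimes S^r \to (r,r)$ composed with the relevant symmetrisations is nonzero on the ``diagonal'' copy of $S^r T^*M$. I would verify injectivity (equivalently, the splitting property) purely at the top symbol level, reducing it to the algebraic statement that $P_{(r,r)}$ restricted to $S^rT^*M \otimes (\text{weight slots})$ is injective, which follows since $S^rT^*M$ (tensored with the density line) is precisely the bottom composition factor of $\mathcal{T}_{(r,r)}$ appearing with multiplicity one. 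Once injectivity of the symbol and projective invariance are in hand, the ``differential splitting operator'' conclusion is immediate by the standard characterisation, with the only remaining bookkeeping being the explicit nonzero constant, which I would not grind through here.
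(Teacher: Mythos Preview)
Your overall strategy matches the paper's: contract $\mathcal{L}(k)$ with $X^{B_1}\cdots X^{B_r}$ and use the identities $[\D_A,X^B]=\delta_A{}^B$, $X^A\D_A V = wV$, and $X^AK_{A\cdots}=0$ to strip away all the $X$'s and conclude that the result is $cK_{C_1\cdots C_r}$ for some universal constant $c$. That part is fine and is exactly what the paper does.

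The gap is in your argument that $c\neq 0$. Your first pass, claiming the weight factors give $r!$, ignores the Young projector: $P_{(r,r)}$ scrambles the $B$- and $C$-slots before you contract, so many terms feed $X$ directly into $K$ (giving zero), and the surviving combination is a genuinely nontrivial signed sum, not just the single ``all $X$'s hit $\D$'s in order'' term with coefficient $r!$. You recognise this and retreat to a multiplicity-one argument, but that argument only shows the composition $S^rT^*M(2r)\to\cT_{(r,r)}\to S^rT^*M(2r)$ is a scalar multiple of the identity; it does not exclude the scalar being zero. Your appeal to Proposition~\ref{key-alg} and Corollary~\ref{conseq} is misplaced here: those results concern how $\V_{(r+1)}\otimes\V_{(r)}$ meets $\V_{(r)}\otimes\V_{(k,\ell)}$ and are used later (Theorem~\ref{key3}) to show $\D\mathcal{L}(k)=0$ for Killing $k$ in the flat case, not to establish the splitting property.

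The paper handles $c\neq 0$ differently and more cleanly. It first observes that the computation of $X^{B_1}\cdots X^{B_r}P_{(r,r)}(\D^r K)$ never requires commuting two $\D$'s past each other (only $[\D,X]$), so $c$ is curvature-independent. It then \emph{defers} the verification of $c\neq 0$ to the projectively flat setting (Proposition~\ref{splitprop1}): there, for $k$ a Killing tensor, Proposition~\ref{key1} gives $\D^r K\in\Gamma(\cT_{(r,r)})$ already, so $P_{(r,r)}$ acts on it by a nonzero scalar, and then $X^{B_1}\cdots X^{B_r}\D_{B_1}\cdots\D_{B_r}K = r!\,K$ directly. Since flat projective structures admit plenty of Killing tensors with $K\neq 0$, this forces $c\neq 0$. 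The advantage of this route is that it sidesteps any direct combinatorial analysis of how $P_{(r,r)}$ interacts with the $X$-contraction.
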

\begin{proof}
We claim that 
\begin{equation}\label{recover}
 X^{B_1}\cdots X^{B_r} \Wp{C_1}{c_1}\cdots
\Wp{C_1}{c_1}P_{(r,r)}(\D_{B_1}\cdots \D_{B_r} K_{C_1\cdots C_r}) = c
k_{c_1\cdots c_r} ,
\end{equation}
where $c$ is a non-zero constant. It clearly suffices to show that 
\begin{equation}\label{recover1} X^{B_1}\cdots X^{B_r} P_{(r,r)}(\D_{B_1}\cdots \D_{B_r} K_{C_1\cdots C_r}) = c
K_{C_1\cdots C_r} .  
\end{equation}
 Contract $X^{B_1}\cdots X^{B_r} $ into the explicit
expansion of $P_{(r,r)}(\D_{B_1}\cdots \D_{B_r} K_{C_1\cdots C_r})$. 
Use (i) $[\D_A,X^B]=\delta^B_A$, (ii) $X^A \D_A f=w f$, for any tractor
field $V$ of weight $w$ (see \nn{DX}), and that (iii) $X^AK_{A\cdots C}=0$, to
eliminate all occurrences of $X$. It follows easily that the result is
$c K_{C_1\cdots C_r}$ for some constant $c$, since there is no way to
include a term involving $\D$s that has the correct valence (i.e.\ the
tractor rank $r$). That $c\neq 0$ is found by explicit computation or
more simply the fact that it is not zero in the case that the affine
connection $\nabla$ is projectively flat, as we shall see below.
\end{proof}

The above definition is motivated by the projectively flat case where
the situation is particularly elegant. (It is easily verified that the
operator $\mathcal L$ above is a co-called first BGG splitting operator, as
discussed in e.g. \cite{cgh11}, and see references therein. We will
not use this fact however.)

We conclude this section with an observation. It shows, in particular, that sections of $\cT_{(r,r)}$ that are parallel for the usual tractor connection determine solutions of \nn{kill}. These are the so-called {\em normal solutions} (see e.g. \cite{cgh11}):
\begin{prop}\label{newprop}
Let $(M,\bp)$ be a projective manifold (not necessarily flat) and 
let $L\in \Gamma(\cT_{(r,r)})$ such that
\begin{equation}\label{newprophyp}
0=
X^{B_1}\cdots X^{B_r}\D_A L_{B_1\cdots B_rC_1\cdots C_r}.
\end{equation}
Then $K_{C_1\cdots C_r}\in \Gamma(\cT_{(r)})$ defined by $K_{C_1\cdots C_r}=X^{B_1}\cdots X^{B_r}L_{B_1\cdots B_r C_1\cdots C_r}$ satisfies equation (\ref{tkill}). If we assume in addition that 
\begin{equation}\label{newprophyp1}
0=
\D_A L_{B_1B_2\cdots B_{r}C_1\cdots C_r},
\end{equation}
then 
 $L$ defines a rank $r$ Killing tensor  via (\ref{recover}) such that $L$ is a constant multiple of ${\mathcal L}(k)$.
\end{prop}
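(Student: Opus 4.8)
The plan is to extract the two conclusions from the hypotheses by repeatedly using the three elementary identities $[\D_A,X^B]=\delta_A{}^B$, $X^A\D_A V=wV$ on a weight-$w$ tractor field, and $X^BW_{AB}{}^C{}_D=0$ (together with its consequences for the $\sharp$-action). First I would address the first conclusion. Set $K_{C_1\cdots C_r}:=X^{B_1}\cdots X^{B_r}L_{B_1\cdots B_rC_1\cdots C_r}$; since $L\in\Gamma(\cT_{(r,r)})$ has weight $0$ and each $X$ carries weight $1$, this $K$ has weight $r$ and by the Young symmetry of $L$ it is totally symmetric in $C_1,\dots,C_r$ and satisfies $X^{C_1}K_{C_1\cdots C_r}=0$ (the $(r,r)$-symmetry forces any further contraction with $X$ to vanish), so by Lemma~\ref{incl-lemma} it is of the form \nn{Kdef}. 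Now apply $\D_{(A}$ and symmetrise over $A,C_1,\dots,C_r$. Moving $\D_A$ past the $r$ copies of $X^{B_i}$ using $[\D_A,X^{B_i}]=\delta_A{}^{B_i}$ produces the ``transported'' term $X^{B_1}\cdots X^{B_r}\D_A L_{B_1\cdots B_rC_1\cdots C_r}$, which vanishes by hypothesis \nn{newprophyp}, plus $r$ ``commutator'' terms in each of which one $X^{B_i}$ has been replaced by $\delta_A{}^{B_i}$. Each such commutator term, after the substitution, is $X^{B_1}\cdots\widehat{X^{B_i}}\cdots X^{B_r}L_{B_1\cdots A\cdots B_rC_1\cdots C_r}$ with a free index $A$ sitting in a position symmetric to the $C_j$ (up to the Young relations); upon symmetrising over $A,C_1,\dots,C_r$ these are symmetrisations of a tensor that, by the second Young relation in \nn{Ychar} for $\V_{(r,r)}$, vanish. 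Hence $\D_{(A}K_{B\cdots C)}=0$, i.e. \nn{tkill} holds.

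For the second, stronger conclusion, assume now the full $\D_A L=0$ of \nn{newprophyp1}. I would first show $L$ is recovered (up to a nonzero constant) from $k:=X^{B_1}\cdots X^{B_r}W^{C_1}{}_{c_1}\cdots W^{C_r}{}_{c_r}K_{C_1\cdots C_r}$ — equivalently that $L$ is a constant multiple of $\mathcal{L}(k)$ — by comparing $L$ with $P_{(r,r)}(\D_{B_1}\cdots\D_{B_r}K_{C_1\cdots C_r})$. Since $\D_A L=0$, iterating gives $\D_{B_1}\cdots\D_{B_r}$ acting only through the definition $K=X^B\cdots X^B L$; commuting each $\D$ inward past the $X$'s (again via $[\D,X]=\delta$, and noting that the leftover pure-$\D$-on-$L$ terms all vanish) expresses $\D_{B_1}\cdots\D_{B_r}K_{C_1\cdots C_r}$ as $L_{B_1\cdots B_rC_1\cdots C_r}$ plus terms in which some $X^{B_i}$ survives or some index has been contracted; applying the Young projector $P_{(r,r)}$ kills the latter (by \nn{XW0}-type orthogonality $X^{B_i}$ cannot survive inside $\cT_{(r,r)}$, and the contracted/trace terms land outside $\V_{(r,r)}$), leaving $P_{(r,r)}(\D^rK)=cL$ for a constant $c$. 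That $c\neq 0$ follows, as in the proof of Proposition~\ref{splitprop}, by contracting back with $X^{B_1}\cdots X^{B_r}$ and using \nn{recover1}; this simultaneously identifies $P_{(r,r)}(\D^rK)=\mathcal{L}(k)$ with $k$ as above, and since by the first part $k$ is a genuine rank-$r$ Killing tensor (its tractor lift $K$ satisfies \nn{tkill}, which by Proposition~\ref{pp} is \nn{kill}), we conclude $L=c^{-1}\mathcal{L}(k)$ with $k$ Killing.

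The routine but slightly delicate combinatorial bookkeeping — keeping track of exactly which index occupies which slot after each commutation, and verifying that every unwanted term is annihilated either by the symmetrisation $(A C_1\cdots C_r)$ or by the projector $P_{(r,r)}$ — is where the real work lies; the main conceptual obstacle is confirming that, under $P_{(r,r)}$, no term carrying a surviving $X$ or an internal trace can contribute, which is precisely the content of the orthogonality relations \nn{XW0} and of Corollary~\ref{conseq}. I expect that, once the weight count and the two Young relations \nn{Ychar} are in hand, each vanishing is immediate, and the only genuinely arithmetic point — the nonvanishing of the constant $c$ — is already disposed of by Proposition~\ref{splitprop} and the projectively flat model of Section~\ref{pflat}.
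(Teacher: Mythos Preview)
Your first part is correct and essentially identical to the paper's argument: both compute $\D_A K$ by moving $\D_A$ past the $X$'s via $[\D_A,X^B]=\delta_A{}^B$, use hypothesis \nn{newprophyp} to kill the transported term, and then invoke the Young relation $L_{B_2\cdots B_{r-1}(A C_1\cdots C_r)}=0$ to see that symmetrising the remaining term over $(A,C_1,\dots,C_r)$ gives zero. The verification that $X^{C_i}K=0$ is also the same.

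For the second part your strategy is right but the execution is muddled. If you actually carry out the iteration you propose --- apply $\D_{A_1}$, then $\D_{A_2}$, etc., each time using $\D L=0$ and $[\D,X]=\delta$ --- you find cleanly that
\[
\D_{A_s}\cdots \D_{A_1}K_{C_1\cdots C_r}=\frac{r!}{(r-s)!}\,X^{B_{s+1}}\cdots X^{B_r}L_{A_1\cdots A_s B_{s+1}\cdots B_r C_1\cdots C_r},
\]
so after $r$ steps $\D^rK=r!\,L$ \emph{on the nose}. There are no ``surviving $X$'' terms and no ``internal traces''; consequently no appeal to $P_{(r,r)}$, to \nn{XW0}, or to Corollary~\ref{conseq} is needed (and indeed neither \nn{XW0}, which concerns the $W$-curvature, nor Corollary~\ref{conseq}, which concerns $\V_{(r+1)}\otimes\V_{(r)}$, is relevant here). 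Since $L$ already lies in $\cT_{(r,r)}$, applying $P_{(r,r)}$ returns a nonzero multiple of $L$, and hence $\mathcal L(k)$ is a nonzero multiple of $L$. This is exactly the paper's route: it applies $\D_{A_2},\dots,\D_{A_r}$ successively to the identity $\D_{A_1}K=r\,X^{B_2}\cdots X^{B_r}L_{A_1B_2\cdots B_rC_1\cdots C_r}$ already obtained in part one.
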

\begin{proof} The proof is a direct rewriting of (\ref{newprophyp}),
\begin{equation}
\label{newcomp}
\begin{array}{rcl}
0&=&
X^{B_1}\cdots X^{B_r}\D_{A_1} L_{B_1\cdots B_rC_1\cdots C_r}
\\
&=&
X^{B_2}\cdots X^{B_r}\left(\D_{A_1} (X^{B_1} L_{B_1,\cdots B_rC_1\cdots C_r})- L_{{A_1} B_2\cdots B_rC_1\cdots C_r} \right)
\\
&=&
-r X^{B_2}\cdots X^{B_r} L_{{A_1} B_2\cdots B_rC_1\cdots C_r} + \D_{A_1} K_{C_1\cdots
  C_r},
\end{array}
\end{equation}
where we successively apply  $[\D_A,X^B]=\delta^B_A$ to commute and  eliminate $X$'s and $\D$'s and  use  the symmetries of $L$. Note that this computation does not require any mutual commutations of $\D_A$'s. 
Now since $L_{B_2\cdots B_r(AC_1\cdots C_r)}=0$ this equation implies equation (\ref{tkill}). 
Moreover, because of the symmetries of $L$, we also have that
\[X^{C_i}K_{C_1\cdots C_r}=X^{C_i}X^{B_1}\cdots X^{B_r}L_{B_1\cdots B_rC_1\cdots C_r}=0,\]
for each $i=1,\ldots , r$. This implies that $K$ is given by a $k$ as in relation (\ref{Kdef}). 

Applying $\D_{A_r}, \ldots , \D_{A_2} $ successively to equation (\ref{newcomp}), commuting  with the $X$'s successively by  $[\D_A,X^B]=\delta^B_A$ and finally using the additional hypothesis (\ref{newprophyp1}), shows that
$\D_{A_r}\cdots \D_{A_1}K_{C_1\cdots C_r}$ is a nonzero constant multiple of $L_{{A_r}\cdots {A_1}C_1\cdots C_r}$. 
Hence, $L$ is a constant multiple of $\mathcal {\mathcal L}(k)$.
\end{proof}

\subsection{Projectively flat structures}\label{pflat}

In this subsection we restrict to affine (or projective) manifolds
that are projectively flat, i.e. where the projective tractor curvature
vanishes. According to equation~\nn{Wexp} this also means that the Thomas-$\D$ operators mutually
commute when acting on weighted tractor sections.

In the projectively flat setting we obtain a nice characterisation of Killing
tensors. 
\begin{prop}\label{key1}
Let $(M,\bp)$ be a projectively flat manifold. 
 Let $k_{c_1\cdots c_r}\in \Gamma(S^r T^*M(2r))$ and define
  $K_{C_1\cdots C_r}:= \Zp{C_1}{c_1} \cdots \Zp{C_r}{c_r}k_{c_1\cdots c_r}$, as in \nn{incl-lemma}. Then   $k$ satisfies the Killing equation (\ref{kill}) if and only if
\begin{equation}\label{result}
\D_{B_1}\cdots \D_{B_r} K_{C_1\cdots C_r}\in \Gamma(\cT_{(r,r)} ).
\end{equation}

In particular, 
on a projectively flat manifold there is a non-zero constant $c$ so that
$$
{\mathcal L}(k)= c\  \D_{B_1}\cdots \D_{B_r} K_{C_1\cdots C_r},
$$
if and only if $k$ solves (\ref{kill}).
\end{prop}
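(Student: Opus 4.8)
The plan is to prove the equivalence by induction on $r$, using Proposition~\ref{pp} as the base mechanism and exploiting the fact that on a projectively flat structure the operators $\D_A$ mutually commute. First I would establish the forward direction. Suppose $k$ solves the Killing equation~\nn{kill}. By Proposition~\ref{pp} this is equivalent to $\D_{(A}K_{B\cdots C)}=0$, i.e.\ $\D_A K_{C_1\cdots C_r}$ has vanishing symmetrisation over all $r+1$ indices; equivalently, by the characterisation of Young symmetries in \nn{Ychar}, $\D_A K_{C_1\cdots C_r}$ already lies in $\cT_{(1,1,\ldots)}$-type components only — more precisely in the kernel of the symmetrisation map $\otimes^{r+1}\cT^*\to \cT_{(r+1)}$. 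The key point is then to iterate: I would show by induction that $\D_{B_j}\cdots\D_{B_r}K_{C_1\cdots C_r}$, viewed as a tensor that is already symmetric in $C_1\cdots C_r$ and (by commutativity of the $\D$'s in the flat case) symmetric in $B_j\cdots B_r$, has vanishing symmetrisation over $B_{j}C_1\cdots C_r$. At the top of the induction ($j=1$) this is exactly the statement that $\D_{B_1}\cdots\D_{B_r}K_{C_1\cdots C_r}$ satisfies the second Young relation of \nn{Ychar} for the partition $(r,r)$, and the first (the two blocks of symmetries) holds by construction and flatness; hence it lands in $\cT_{(r,r)}$, which is \nn{result}.

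The inductive step is where the algebra of Section~\ref{alg} enters, and I expect this to be the main obstacle. Going from the statement at level $j+1$ to level $j$, one applies $\D_{B_j}$; the new object is symmetric in $B_j\cdots B_r$ (flatness) and in $C_1\cdots C_r$, but one must check that symmetrising over $B_jC_1\cdots C_r$ still vanishes. Here I would use Corollary~\ref{conseq}, or rather its underlying Proposition~\ref{key-alg}: the tensor $\D_{B_j}(\text{level }j{+}1\text{ object})$ lives in $\cT_{(1)}\otimes\bigl(\cT_{(r-j+1)}\otimes_{\text{Young}}\cT_{(r)}\bigr)$-type space, and the vanishing of the relevant symmetrisation is forced because any $\V_{(k,\ell)}$-component with $k+\ell=$ (the relevant size) and $\ell\ge 1$ that could obstruct it is excluded — its appearance would contradict the level-$(j{+}1)$ hypothesis together with the isomorphism \nn{main-map}. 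Concretely: the obstruction to the level-$j$ symmetry lying in the right module is measured by a map that factors through \nn{main-map}, which is injective, so the obstruction vanishes iff it vanished one level up. One must be careful that the density weight is exactly right (each $\D$ lowers weight by one, $K$ has weight $r$, so after $r-j+1$ applications the weight is $j-1$), and that the commutator $[\D_A,X^B]=\delta_A{}^B$ combined with $X^BK_{B\cdots C}=0$ is used to identify the $X$-contraction correctly — but in the flat case no curvature terms intervene.

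For the reverse direction, suppose $\D_{B_1}\cdots\D_{B_r}K_{C_1\cdots C_r}\in\Gamma(\cT_{(r,r)})$. Then in particular its symmetrisation over $B_1C_1\cdots C_r$ vanishes. I would peel off $r-1$ of the Thomas-$\D$ operators: contracting with $X$'s and using $[\D_A,X^B]=\delta_A{}^B$, $X^A\D_AV=wV$, and $X^BK_{B\cdots C}=0$ exactly as in the proof of Proposition~\ref{splitprop} (equation~\nn{recover1}), one recovers that $X^{B_2}\cdots X^{B_r}\D_{B_1}\cdots\D_{B_r}K_{C_1\cdots C_r}$ is a nonzero constant multiple of $\D_{B_1}K_{C_1\cdots C_r}$, whose total symmetrisation therefore also vanishes — and that is $\D_{(B_1}K_{C_1\cdots C_r)}=0$, i.e.\ the Killing equation by Proposition~\ref{pp}. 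The final sentence of the Proposition is then immediate: by definition $\mathcal L(k)=P_{(r,r)}(\D_{B_1}\cdots\D_{B_r}K_{C_1\cdots C_r})$, and when $k$ solves \nn{kill} we have just shown (flat case) that $\D_{B_1}\cdots\D_{B_r}K_{C_1\cdots C_r}$ already lies in $\cT_{(r,r)}$, so the projector $P_{(r,r)}$ acts on it as a nonzero scalar $c$ (it is a nonzero idempotent-up-to-normalisation on that irreducible piece), giving $\mathcal L(k)=c\,\D_{B_1}\cdots\D_{B_r}K_{C_1\cdots C_r}$; conversely if that identity holds then the right-hand side is a section of $\cT_{(r,r)}$ and we are back in case \nn{result}, whence $k$ solves \nn{kill}. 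The one point deserving care is the nonvanishing of $c$, which we already know from Proposition~\ref{splitprop}.
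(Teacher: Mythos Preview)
Your reverse direction and your handling of the final $\mathcal L(k)$ statement match the paper's argument essentially verbatim: from \nn{result} one has $\D_{B_1}\cdots\D_{B_{r-1}}\D_{(B_r}K_{C_1\cdots C_r)}=0$, and contracting $X^{B_1}\cdots X^{B_{r-1}}$ using \nn{DX} gives $(r-1)!\,\D_{(B_r}K_{C_1\cdots C_r)}=0$, whence Proposition~\ref{pp} finishes.

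Your forward direction, however, is substantially overcomplicated, and the proposed use of Corollary~\ref{conseq} is misplaced. The paper's argument is a single observation: flatness makes the $\D$'s commute, so $\D_{B_1}\cdots\D_{B_r}K_{C_1\cdots C_r}$ already lies in $\cT_{(r)}\otimes\cT_{(r)}$; and since \nn{tkill} holds, symmetrising over the $r+1$ indices $B_r,C_1,\ldots,C_r$ yields $\D_{B_1}\cdots\D_{B_{r-1}}\D_{(B_r}K_{C_1\cdots C_r)}=0$, which is precisely the Young condition \nn{Ychar} for $(r,r)$. That is the whole proof. The ``inductive step'' you flag as the main obstacle is in fact immediate from commutativity: commute $\D_{B_j}$ past the others and apply \nn{tkill} directly --- no appeal to Proposition~\ref{key-alg} or Corollary~\ref{conseq} is needed, and your sketch of how those would enter (matching the level-$j$ object against $\V_{(k,\ell)}$-components) does not obviously fit the index pattern at intermediate $j$. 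In the paper, Corollary~\ref{conseq} is reserved for a genuinely different purpose, namely Theorem~\ref{key3}, where one must show $\D_A\mathcal L(k)=0$; there the algebraic input is essential because one is arguing that a section simultaneously in $\cT_{(r+1)}\otimes\cT_{(r)}$ and in $\cT_{(r)}\otimes\cT_{(r,1)}$ must vanish.
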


\begin{proof}
($\Rightarrow$) Since we work in the projectively flat setting the
  Thomas-$\D$ operators commute. So 
$$
\D_{B_1}\cdots \D_{B_r} K_{C_1\cdots C_r}\in \Gamma(\cT_{(r)}\otimes \cT_{(r)})
$$
Suppose that (\ref{kill}) holds. Then
(\ref{tkill}) holds, so symmetrising the left hand side of the display
over any $r+1$ indices that include $C_1\cdots C_r$ results in annihilation and so we conclude
(\ref{result}) from the definition of  $\V_{(r,r)}$ and hence of $\cT_{(r,r)} $ in \nn{Ychar}.

\medskip 
($\Leftarrow$) If (\ref{result}) holds then 
$$
\D_{B_1}\cdots \D_{B_{r-1}}\D_{(B_r} K_{C_1\cdots C_r)}=0
$$ 
so  
$$
X^{B_1}\cdots X^{B_{r-1}}\D_{B_1}\cdots \D_{B_{r-1}}\D_{(B_r} K_{C_1\cdots C_r)}
= (r-1)!\  \D_{(B_r} K_{C_1\cdots C_r)}  =0,
$$
from \nn{DX}, thus we obtain the result from Proposition \ref{pp}.
\end{proof}

Here and throughout, as above, $K\in \Gamma(\cT_{(r)}(r))$ is
the image of some $k\in \Gamma(S^r T^*M(2r)) $ as in formula~\nn{Kdef}.

\begin{prop}\label{splitprop1}
The constant $c$ in equation~(\ref{recover}) is not 0. 
\end{prop}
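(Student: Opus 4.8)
The plan is to prove Proposition \ref{splitprop1} by reducing to the projectively flat model, where the constant $c$ in \eqref{recover} can be computed (or at least shown nonzero) by a direct and transparent calculation. The key observation is that the constant $c$ in the identity \eqref{recover1}, namely
\[
X^{B_1}\cdots X^{B_r}\,P_{(r,r)}\bigl(\D_{B_1}\cdots \D_{B_r} K_{C_1\cdots C_r}\bigr) = c\,K_{C_1\cdots C_r},
\]
was already shown in the proof of Proposition \ref{splitprop} to arise purely formally: after using $[\D_A,X^B]=\delta^B_A$, $X^A\D_A V = wV$, and $X^A K_{A\cdots C}=0$ to eliminate every $X$, there is no room for residual $\D$'s (valence/rank reasons), so the result is necessarily a universal constant times $K$. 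That argument used no curvature assumption, so the value of $c$ is independent of the projective structure; hence it suffices to evaluate $c$ in any single convenient setting, and the projectively flat one is the natural choice.

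First I would invoke Proposition \ref{key1}: on a projectively flat manifold, if $k$ solves \eqref{kill} then $\D_{B_1}\cdots\D_{B_r}K_{C_1\cdots C_r}$ already lies in $\cT_{(r,r)}$, so the Young projector $P_{(r,r)}$ acts as the identity on it and ${\mathcal L}(k)$ equals $\D_{B_1}\cdots\D_{B_r}K_{C_1\cdots C_r}$ up to the (same) constant. Thus on a flat structure \eqref{recover1} becomes simply
\[
X^{B_1}\cdots X^{B_r}\,\D_{B_1}\cdots \D_{B_r} K_{C_1\cdots C_r} = c\,K_{C_1\cdots C_r},
\]
and now I would evaluate the left side by peeling off one $\D$ at a time from the outside. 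Using the commutator identity \eqref{[DX]} to move each $X^{B_i}$ inward past the $\D$'s, at each stage one either hits the matching $\D_{B_i}$ (producing a $\delta_{B_i}^{B_i}$-type contraction, which on a tractor index of the relevant type contributes a definite nonzero combinatorial factor coming from $X^{A}\D_A$ acting on a weight-$r$ object together with the $[\D,X]$ terms), or one eventually meets $X^{B_i}K_{\cdots}=0$. Tracking the weights carefully — $K$ has weight $r$, and each successive $\D$ drops the weight by one, so the innermost application sees weight $r-(r-1)=1$, and the $X^{A}\D_A V = wV$ identity is applied with $w$ running through $r, r-1, \ldots, 1$ as one unwinds — gives $c$ as an explicit nonzero product of positive integers (morally of the shape $\prod_{j=1}^{r}$ of strictly positive contributions, e.g. something like $r!$ times a further combinatorial factor from the symmetrisations hidden in $P_{(r,r)}$; the precise value is not needed, only its nonvanishing).

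The main obstacle I anticipate is bookkeeping rather than conceptual: making sure that when one commutes the block $X^{B_1}\cdots X^{B_r}$ through $\D_{B_1}\cdots\D_{B_r}$ the cross-terms (an $X^{B_i}$ meeting a $\D_{B_j}$ with $i\neq j$) are correctly accounted for, and that one checks they cannot conspire to cancel the diagonal contributions. The cleanest way to control this is induction on $r$: strip off $X^{B_1}$ and $\D_{B_1}$ first, using \eqref{[DX]} to write $X^{B_1}\D_{B_1}(\cdots) = \D_{B_1}(X^{B_1}\cdots) + (\text{lower-order in the number of }X\text{'s})$, observe that $X^{B_1}$ hitting the inner $K$ gives zero by \eqref{Xorth}, and feed the remaining expression — which is of the same shape with $r$ replaced by $r-1$ and the weight shifted — into the inductive hypothesis, collecting at each step a strictly positive integer factor. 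Since a finite product of strictly positive rationals is nonzero, this yields $c\neq 0$, and because $c$ is structure-independent as noted above, the conclusion holds for an arbitrary (possibly curved) projective manifold, completing the proof.
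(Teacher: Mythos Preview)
Your proposal is correct and follows essentially the same route as the paper: both argue that $c$ is curvature-independent (no commutation of $\D$'s is needed in the reduction of Proposition~\ref{splitprop}), pass to the projectively flat model, invoke Proposition~\ref{key1} to replace $P_{(r,r)}(\D^r K)$ by $\D^r K$, and then evaluate the contraction directly. The paper simply records the answer as $r!\,K_{C_1\cdots C_r}$; your inductive peeling via $X^{A}\D_A V=wV$ with $w$ running through $1,2,\ldots,r$ is exactly the computation that produces this factor, so your worry about cross-terms is unnecessary once you use $X^{A}\D_A$ rather than commuting $X$ past $\D$.
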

\begin{proof}
  In the case that the structure is projectively flat this is
  immediate from the Proposition  \ref{key1}, 
    since $X^{B_1}\cdots X^{B_r}$
  contracted into $\D_{B_1}\cdots \D_{B_r} K_{C_1\cdots C_r}$ gives
  $r!\  K_{C_1\cdots C_r} $. But it is clear from the argument in
  the proof of Proposition \ref{splitprop} that $c$ does not depend on
  curvature, as no commutation of $\D$s is involved.
\end{proof}

\begin{thm}\label{key3} Let $(M,\bp)$ be  projectively flat manifold. Then the splitting operator $\mathcal L$ gives an isomorphism between  Killing tensors of rank $r$ and  sections of 
$\cT_{(r,r)}$ that are parallel for the 
  projective tractor connection. 
\end{thm}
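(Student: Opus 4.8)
The plan is to exhibit the inverse of $\mathcal L$ explicitly and check both composites are the identity on the relevant spaces. First I would establish that, on a projectively flat manifold, $\mathcal L$ actually lands in the parallel sections of $\cT_{(r,r)}$. Since the projective structure is flat, the tractor connection $\nabla^\cT$ is flat, and the Thomas-$\D$ operator is (a projectively invariant avatar of) the tractor-coupled exterior derivative; in a parallel tractor frame $\D_A$ is simply $Z_A{}^a$ contracted into ordinary differentiation together with the $wY_A$ term. Given a Killing tensor $k$, Proposition \ref{key1} tells us $\mathcal L(k)=c\,\D_{B_1}\cdots\D_{B_r}K_{C_1\cdots C_r}\in\Gamma(\cT_{(r,r)})$; I would then argue that one more application of $\D$ to this lands, before Young projection, in $\cT_{(r+1)}\otimes\cT_{(r)}$ by flatness and symmetry of iterated $\D$'s, so by Corollary \ref{conseq} (the intersection $(\V_{(r+1)}\otimes\V_{(r)})\cap(\V_{(r)}\otimes\V_{(k,\ell)})=\{0\}$ with $k+\ell=r+1$) the $\cT_{(r,r)}$-valued object $\mathcal L(k)$ is $\D$-closed in the sense that $\D_A\mathcal L(k)_{B_1\cdots B_rC_1\cdots C_r}$ has no component in $\cT_{(r,r)}$-type summands; in the flat setting $\D$-closure of a section already in the right irreducible bundle forces it to be parallel. (Alternatively, one invokes the standard BGG fact, noted parenthetically after Proposition \ref{splitprop}, that the first BGG splitting operator produces parallel sections precisely over a flat structure.)

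Next I would show $\mathcal L$ is injective: this is immediate from Proposition \ref{splitprop} together with Proposition \ref{splitprop1}, since $X^{B_1}\cdots X^{B_r}$ contracted into $\Wp{C_1}{c_1}\cdots\Wp{C_r}{c_r}\mathcal L(k)$ recovers $c\,k$ with $c\neq 0$; hence $\mathcal L(k)=0$ implies $k=0$. For surjectivity onto parallel sections, let $L\in\Gamma(\cT_{(r,r)})$ be parallel for $\nabla^\cT$. Then certainly $\D_A L=0$ (the $Y_A$-term vanishes because $L$ has weight $0$, and the $Z_A{}^a\nabla_a$-term vanishes by parallelism), so hypotheses \eqref{newprophyp} and \eqref{newprophyp1} of Proposition \ref{newprop} are satisfied. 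That proposition then hands us a rank-$r$ Killing tensor $k$ — defined by $k_{c_1\cdots c_r}$ proportional to $\Wp{C_1}{c_1}\cdots\Wp{C_r}{c_r}X^{B_1}\cdots X^{B_r}L_{B_1\cdots B_rC_1\cdots C_r}$ — with the property that $L$ is a constant multiple of $\mathcal L(k)$. Rescaling $k$ absorbs the constant, so $L$ is in the image of $\mathcal L$. Combining the two directions, $\mathcal L$ restricted to Killing tensors is a bijection onto $\{L\in\Gamma(\cT_{(r,r)}):\nabla^\cT L=0\}$, and it is visibly $\mathbb R$-linear, hence an isomorphism.

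The main obstacle I expect is the ``$\mathcal L(k)$ is parallel'' direction — i.e. going from $\D$-closure to genuine parallelism. On a flat projective structure the bundle $\cT_{(r,r)}$ is associated to an irreducible $SL(n+1,\mathbb R)$-module, and one must know that for this particular module the first BGG operator (the projection of $\D$ to the complementary Young components) has the property that its kernel on $\Gamma(\cT_{(r,r)})$ consists exactly of the parallel sections. This is a representation-theoretic/homological statement about the BGG complex in the flat case; here it can be made concrete using Corollary \ref{conseq} to control which irreducible pieces appear in $\D$ applied to a $\cT_{(r,r)}$-section, reducing ``$\D$-closed'' to ``$Z_A{}^a\nabla_a L$ has vanishing projection to every target summand'', which in the flat frame forces $\nabla_a L=0$. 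I would present this carefully but without re-deriving the LR decomposition, citing Proposition \ref{key-alg} and Corollary \ref{conseq} for the algebra and the flatness hypothesis for the passage from the symmetrised identity to the full parallelism.
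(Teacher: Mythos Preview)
Your injectivity argument (via Propositions \ref{splitprop} and \ref{splitprop1}) and your surjectivity argument (Proposition \ref{newprop} applied to a parallel $L$, noting that weight zero plus $\nabla^\cT L=0$ gives $\D_A L=0$) match the paper exactly. The gap is in the step showing $\mathcal L(k)$ is parallel, and it is simpler to close than you think.

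You correctly observe that in the flat case $\D_A\mathcal L(k)=c\,\D_A\D_{B_1}\cdots\D_{B_r}K_{C_1\cdots C_r}$ lies in $\cT_{(r+1)}\otimes\cT_{(r)}$, by commutativity of the $\D$'s. But you then try to draw from Corollary \ref{conseq} only a partial vanishing (``no component in $\cT_{(r,r)}$-type summands''), and from there invoke a general BGG closure-implies-parallel principle. This detour is both unnecessary and not justified by the Corollary as stated: to conclude anything from Corollary \ref{conseq} you must place the object in \emph{both} $\V_{(r+1)}\otimes\V_{(r)}$ and some $\V_{(r)}\otimes\V_{(k,\ell)}$. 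You never supply the second containment; your discussion instead drifts toward decomposing $\cT^*\otimes\cT_{(r,r)}$, which is a different (and harder) problem.

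The missing observation is this: since $k$ is Killing, Proposition \ref{pp} gives $\D_{(B_r}K_{C_1\cdots C_r)}=0$, i.e.\ $\D K\in\Gamma(\cT_{(r,1)})$. Hence $\D_A\D_{B_1}\cdots\D_{B_{r-1}}\big(\D_{B_r}K_{C_1\cdots C_r}\big)$ also lies in $\cT_{(r)}\otimes\cT_{(r,1)}$ (first $r$ indices symmetric, last $r+1$ carrying the $(r,1)$ symmetry). Now Corollary \ref{conseq} with $(k,\ell)=(r,1)$ gives $\D_A\mathcal L(k)=0$ \emph{outright}. Since $\mathcal L(k)$ has weight $0$, one has $\D_A\mathcal L(k)=Z_A{}^a\nabla_a\mathcal L(k)$, and so $\nabla_a\mathcal L(k)=0$ immediately. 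This is precisely the paper's argument: there is no separate ``$\D$-closed implies parallel'' step to worry about.
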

\begin{proof}
Since $\mathcal L$ is a splitting operator, it does not have a kernel. Moreover, using  that $\nabla_a L=0$ is equivalent  to
$\D_A  L=0$, Proposition \ref{newprop} shows that every parallel section of $\cT_{(r,r)}$ arises as $\mathcal L(k)$ for a Killing tensor $k$.
So it remains to show that 
 $\mathcal L(k)$ is a parallel section of the projective tractor connection whenever $k$ is a Killing tensor:
Suppose that (\ref{kill}) holds. Then by Proposition~\ref{key1}, 
$$
  \D_{B_1}\cdots \D_{B_r}
  K_{C_1\cdots C_r} ={\mathcal L}(k),
$$ 
and ${\mathcal L}(k)$ has weight 0 so
$$
\D_{A}{\mathcal L}(k)= Z_A{}^a\nd_a {\mathcal L}(k).
$$
Thus it suffices to show that $\D_{A}{\mathcal L}(k)=0$. But 
$$
\D_{A}{\mathcal L}(k)= \D_{A}\D_{B_1}\cdots \D_{B_r}
  K_{C_1\cdots C_r} =0,
$$ 
because of the identity $[\V_{(r+1)}\otimes \V_{(r)}]\cap [\V_{(r)}\otimes
  \V_{(r,1)}]=\{0\} $ from Corollary \ref{conseq} (where we have used \eqref{tkill} which implies that $\D K$ is a section of $\cT_{(r,1)}(2r-1)$).
\end{proof}

\medskip

As a final note in this section we observe that it is easy to
``discover'' the projectively invariant Killing equation using the
tractor machinery, as follows. Consider a symmetric rank $r$ covariant tensor 
field $k_{c_1\cdots c_r}$ of projective weight $2r$. Form 
$$ 
K_{C_1\cdots C_r}\in S^r\cT^*(r)
$$ 
by Lemma \ref{incl-lemma}. We wish to prolong this to a parallel
tractor. This requires a tractor field of weight $0$. Thus we apply
the $r$-fold composition of $\D$. Altogether we have the projectively
invariant operator
$$
k\mapsto \D_{B_1}\cdots \D_{B_r} \Zp{C_1}{c_1}\cdots \Zp{C_r}{c_r} k_{c_1\cdots c_r}= \D_{B_1}\cdots \D_{B_r} K_{C_1\cdots C_r}, 
$$
and the image has weight zero. Thus we can form 
$$
\nabla_a \D_{B_1}\cdots \D_{B_r} \Zp{C_1}{c_1}\cdots \Zp{C_r}{c_r} k_{c_1\cdots c_r},
$$
by construction it is projectively invariant and we
can ask what it means for this to be zero. 
Equivalently we seek the condition on $k$ determined by 
$$
\D_A \D_{B_1}\cdots \D_{B_r} K_{C_1\cdots C_r}=0 .
$$
But this implies $X^{C_1}\cdots X^{C_r} \D_A \D_{B_1}\cdots \D_{B_r} K_{C_1\cdots C_r}=0$ and from equation (\ref{newprophyp})
in the proof of Theorem \ref{key3} it follows that 
$$
\D_{(A} K_{B_1\cdots 
  B_r)}= 0, \qquad \text{implies} \qquad \nd_{(a}k_{b_1\cdots b_r)}=0
$$
where we again used Proposition \ref{pp}.

\subsection{Restoring curvature} \label{c-section}

We return now to the general curved case and seek the
generalisations of the results in the previous subsection. First we
observe the following first generalisation of Proposition \ref{key1}:
\begin{prop}\label{key2}
Let $k\in \Gamma(S^rT^*M(2r))$ on a general affine manifold
$(M,\nabla)$ (or projective manifold $(M,\bp)$) and $K=
K(k)\in\Gamma(\cT_{(r)} (r)) $, as in \nn{Kdef}. Then $k$ is a Killing tensor, i.e., a solution
of (\ref{kill}), if and only if we have
\begin{equation}\label{want} 
  {\mathcal L}(k)=\D_{B_1}\cdots \D_{B_r} K_{C_1\cdots C_r} +
  \operatorname{{\bf Kurv}}(K),
\end{equation}
where $\operatorname{{\bf Kurv}}$ is a specific projectively
invariant linear differential operator on $\Gamma( \cT_{(r)} (r))$, of order at most $(r-2)$, constructed with the
$W$-curvature  and the Thomas-$\D$ operators
and such that the $W$-curvature and its $\D$-derivatives appear in the
coefficients of every term.
\end{prop}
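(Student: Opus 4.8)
The strategy is to reduce the curved statement to the flat one of Proposition~\ref{key1} by carefully tracking the commutator terms that arise when the Thomas-$\D$ operators are applied in succession and are \emph{not} assumed to commute. Recall that on a projectively flat manifold Proposition~\ref{key1} gives, for a Killing tensor $k$, the identity ${\mathcal L}(k)=c\,\D_{B_1}\cdots\D_{B_r}K_{C_1\cdots C_r}$, because in that setting $\D_{B_1}\cdots\D_{B_r}K_{C_1\cdots C_r}$ already lies in $\cT_{(r,r)}$ and so the Young projector $P_{(r,r)}$ acts trivially (up to a nonzero constant). In the curved case this fails precisely because the $\D$'s do not commute; by \eqref{Wdef} the failure of each commutator $[\D_A,\D_B]$ is measured algebraically by the $W$-curvature $W_{AB}{}^C{}_D$, which, importantly, is \emph{projectively invariant}. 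The plan is therefore: (i)~show that, for a Killing tensor $k$, $\D_{B_1}\cdots\D_{B_r}K_{C_1\cdots C_r}$ differs from its $P_{(r,r)}$-projection only by terms built from the $W$-curvature, its $\D$-derivatives, and further $\D$'s applied to $K$; (ii)~organise this correction into a single operator $\operatorname{{\bf Kurv}}$ and verify it has the claimed order and the claimed structure (every term containing $W$ or a $\D$-derivative of $W$); (iii)~run the argument in reverse to get the ``if'' direction.

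For step~(i), I would argue by successive re-symmetrisation. Start from $\D_{B_1}\cdots\D_{B_r}K_{C_1\cdots C_r}$ and apply the Young projector $P_{(r,r)}$. By its explicit form \eqref{DY}, $P_{(r,r)}$ first symmetrises over the $B$'s, then over the $C$'s, then antisymmetrises over the $r$ pairs $[B_i,C_i]$. Using the Killing hypothesis in the form \eqref{tkill}, the object $\D_{(A}K_{B\cdots C)}$ vanishes, and more generally from Proposition~\ref{pp} one knows that $\D K$ is a section of $\cT_{(r,1)}(2r-1)$, exactly as used in the proof of Theorem~\ref{key3}. So each time we ``straighten'' a product $\D_{B_i}\cdots$ into symmetrised form, the obstruction to the object already being in the required Young type is a sum of commutator terms $[\D_{B_i},\D_{B_j}]$ acting on a lower string of $\D$'s applied to $K$; each such commutator is, by \eqref{Wdef} and the Leibniz-extended $W_{AB}\sharp$, replaced by a $W$-curvature contracted into $\D^{s}K$ for some $s\le r-2$ (two $\D$'s are consumed by the commutator). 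Iterating and collecting, the net correction is a finite sum of terms, each of the schematic form $(\D^{j}W)\,\sharp\,(\D^{s}K)$ with $j+s\le r-2$; hence $\operatorname{{\bf Kurv}}$ is a differential operator of order at most $r-2$ in which every term carries a factor of $W$ or a $\D$-derivative of it, and it is projectively invariant because $\D$, $W$, and $X$ all are. That is precisely \eqref{want}, once one checks (as in Proposition~\ref{splitprop1}) that the leading constant $c$ survives and is nonzero — but that already follows since $c$ does not depend on curvature.

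For the ``if'' direction I would reverse the bookkeeping: assuming \eqref{want} holds, contract $X^{B_1}\cdots X^{B_r}$ into both sides. On the left, by \eqref{recover1}, this recovers $c\,K_{C_1\cdots C_r}$. On the right, $X^{B_1}\cdots X^{B_r}\D_{B_1}\cdots\D_{B_r}K_{C_1\cdots C_r}$ reduces, via $[\D_A,X^B]=\delta_A{}^B$, $X^A\D_A V=wV$, and $X^BK_{B\cdots C}=0$, to $(r!)\,K_{C_1\cdots C_r}$ plus lower-order terms, while the contraction of $\operatorname{{\bf Kurv}}(K)$ with $X$'s can be analysed term-by-term using $X^AW_{AB}{}^C{}_D=0$ from \eqref{XW0}; this shows the right-hand side carries the same leading $K$ and the remaining terms must then combine, via \eqref{tkill} read backwards and Proposition~\ref{pp}, to force $\nd_{(a}k_{b_1\cdots b_r)}=0$. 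The main obstacle I anticipate is step~(i): keeping precise track of the combinatorics of which commutators arise and in which order, so that one can genuinely certify both the order bound $r-2$ and the claim that \emph{every} term of $\operatorname{{\bf Kurv}}$ is proportional to $W$ or a $\D$-derivative of $W$ — this requires a disciplined induction on $r$, peeling off one outer $\D$ at a time and invoking the inductive form of the identity together with the Bianchi identities \eqref{B12} to absorb $\D W$ terms, rather than a direct term-count. The rest is largely bookkeeping once that induction is set up.
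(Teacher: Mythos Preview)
Your forward direction $(\Rightarrow)$ is essentially the paper's argument: expand $P_{(r,r)}(\D^rK)$ via \eqref{DY}, carry out the same rewriting that in the flat case collapses everything to $\D^rK$, and keep track of the commutators $[\D_A,\D_B]=W_{AB}\sharp$ that appear in the curved case. The form $(\D^jW)\sharp(\D^sK)$ with $j+s\le r-2$ is exactly the structure the paper uses in the converse.

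The converse $(\Leftarrow)$, however, has a genuine gap. Contracting \eqref{want} with \emph{all} $r$ factors $X^{B_1}\cdots X^{B_r}$ is vacuous: by \eqref{recover1} the left-hand side is $cK_{C_1\cdots C_r}$, while on the right $X^{B_1}\cdots X^{B_r}\D_{B_1}\cdots\D_{B_r}K_{C_1\cdots C_r}=r!\,K_{C_1\cdots C_r}$ exactly (iterate $X^A\D_AV=wV$; there are no ``lower-order terms''), and $X^{B_1}\cdots X^{B_r}\operatorname{{\bf Kurv}}(K)=0$ by the order count together with $X^AW_{A\cdots}=0$ and $X^BK_{B\cdots}=0$. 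You are left with a numerical identity between constants, not a differential equation on $k$; nothing here forces $\D_{(A}K_{B\cdots C)}=0$. Your phrase ``via \eqref{tkill} read backwards'' does not name a mechanism.

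What you are missing is the use of the Young symmetry of the left-hand side. Since $\mathcal{L}(k)\in\Gamma(\cT_{(r,r)})$ by construction, \eqref{want} forces $\D_{B_1}\cdots\D_{B_r}K_{C_1\cdots C_r}+\operatorname{{\bf Kurv}}(K)\in\Gamma(\cT_{(r,r)})$, and hence symmetrisation over the $r+1$ indices $B_r,C_1,\ldots,C_r$ annihilates it. Now contract with only $X^{B_1}\cdots X^{B_{r-1}}$: because each term of $\operatorname{{\bf Kurv}}(K)$ carries at most $r-2$ Thomas-$\D$'s in total, at least one of the $r-1$ factors of $X$ must hit an undifferentiated $W$ or $K$ and so kills that term; the remaining piece gives $(r-1)!\,\D_{(B_r}K_{C_1\cdots C_r)}=0$, which is the Killing equation via Proposition~\ref{pp}. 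This is precisely how the paper runs the converse, and it is the step your proposal needs.
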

\begin{proof}
($\Rightarrow$) Suppose that $k$ solves \nn{kill}. We have 
$$
{\mathcal L}(k)= P_{(r,r)} (\D_{B_1}\cdots \D_{B_r} K_{C_1\cdots C_r} ).
$$
We expand out this expression on the right hand side using the
definition of the operator $P_{(r,r)}$ in \nn{DY}. 
We would like to
show that the resulting terms can be combined and rearranged to yield~(\ref{want}).  We have the identity (\ref{tkill}) available.  In the projectively
flat case we also have the identity $[\D_A,\D_B]=0$ as an operator on
(weighted) tractors. In the flat case the two identities are enough to
conclude (\ref{want}) (with $
\operatorname{{\bf Kurv}}(K)=0$), according to the proof of
Proposition \ref{key1}. In the curved case we perform the same formal
computation but keep track of the curvature, i.e., replace each
$[\D_A,\D_B]$ with $W_{AB}\sharp$ (instead of 0). The order statement follows by construction (or elementary weight arguments), so this proves the result in this direction and generates a specific formula for $\operatorname{{\bf Kurv}}(K)$.

($\Leftarrow$) Now  we suppose that $k \in \Gamma(S^rT^*M(2r))$ is any section such that
$$
\D_{B_1}\cdots \D_{B_r} K_{C_1\cdots C_r} +
\operatorname{{\bf Kurv}}(K)_{B_1\cdots B_rC_1\cdots C_r}
$$
 is a section of $\cT_{(r,r)}(r)$. Then
in particular 
$$
\D_{B_1}\cdots \D_{B_{r-1}}\D_{(B_r} K_{C_1\cdots C_r)} +
\operatorname{{\bf Kurv}}(K)_{B_1\cdots B_{r-1}(B_rC_1\cdots C_r)}
=0,
$$ according to \nn{Ychar}. As in the proof of Proposition \ref{key1}, we
contract now with $X^{B_1}\cdots X^{B_{r-1}}$. This contraction
annihilates the second term in the display as follows. Each of the
$X^{B_i}$'s is contracted into either a $\D_{B_i}$, into $K$, or into the curvature  $W$.
Thus every $X^{B_i}$ can be eliminated using the identities \nn{DX}, that $X^BK_{B\cdots
  C}=0$, and that similarly 
 $X^B
 $ contracted into any of the lower indices of the curvature $W$ is zero.  But, by the construction of the operator
$\operatorname{{\bf Kurv}}$, in any term there are at most
$(r-2)$ $\D$ operators (either  applied to the curvature or directly to the argument) and so the identities \nn{DX} remove only $(r-2)$ of
the $(r-1)$ $X$'s. This means that in every term produced we have a
contraction of the form $X^BK_{B\cdots C}=0$, so that term vanishes,
or $X$ into $W$ so also that term vanishes.
Thus we are left with
$$
0=X^{B_1}\cdots X^{B_{r-1}}\D_{B_1}\cdots \D_{B_{r-1}}\D_{(B_r} K_{C_1\cdots C_r)} =(r-1)! \ \D_{(B_r} K_{C_1\cdots C_r)},
$$
as in the proof of Proposition \ref{key1}.
\end{proof}

\begin{prop} \label{preresult}
Let $k\in \Gamma(S^rT^*M(2r))$ on a general affine manifold
$(M,\nabla)$ (or projective manifold $(M,\bp)$) and $K=
K(k)\in\Gamma(\cT_{(r)} (r)) $, as in \nn{Kdef}. Then $k$ is a solution
of (\ref{kill}) if and only if we have
\begin{equation}\label{want2} 
 \D {\mathcal L}(k)=  \operatorname{{\bf Curv}}(K),
\end{equation}
where $\operatorname{{\bf Curv}}$ is a projectively invariant linear
differential operator, of order at most $(r-1)$, on
$\Gamma(\cT_{(r,r)}(r))$ given by a specific formula
constructed with the $W$-curvature, and the Thomas-$\D$ operator such
that the $W$-curvature and its derivatives appear in the coefficients
of every term. Moreover, if $\mathcal L(k)$ satisfies equation (\ref{want2}), then 
\begin{equation}\label{t}
X^{B_1}\cdots X^{B_r}\D_A\mathcal L(k)_{{B_1}\cdots _{B_r} C_1\cdots
  C_r}=0.
\end{equation}
\end{prop}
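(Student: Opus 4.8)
The plan is to build $\operatorname{{\bf Curv}}$ by applying one more Thomas-$\D$ to the identity~\eqref{want} of Proposition~\ref{key2}, and then to extract the normalisation statement~\eqref{t} as an immediate corollary. Concretely, suppose first that $k$ solves~\eqref{kill}. By Proposition~\ref{key2} we have $\mathcal L(k)=\D_{B_1}\cdots\D_{B_r}K_{C_1\cdots C_r}+\operatorname{{\bf Kurv}}(K)$, an identity of sections of $\cT_{(r,r)}(r)$. Apply $\D_A$ to both sides. On the left this gives $\D_A\mathcal L(k)$. On the right, the first term becomes $\D_A\D_{B_1}\cdots\D_{B_r}K_{C_1\cdots C_r}$; the second term becomes $\D_A\operatorname{{\bf Kurv}}(K)$, which is manifestly of the required type (projectively invariant, built from the $W$-curvature, its $\D$-derivatives, and $\D$'s, with the $W$-curvature appearing in every term, and of order at most $(r-1)$). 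So the whole task reduces to showing that, \emph{modulo such curvature terms}, $\D_A\D_{B_1}\cdots\D_{B_r}K_{C_1\cdots C_r}$ vanishes when $k$ is Killing.

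First I would record that, by Proposition~\ref{pp}, equation~\eqref{tkill} holds, so $\D_B K$ lies in $\cT_{(r,1)}(2r-1)$ (its symmetrisation over $B$ and all the $C$'s vanishes); iterating and symmetrising, $\D_{B_1}\cdots\D_{B_r}K_{C_1\cdots C_r}$ would, in the flat case, lie in $\cT_{(r,r)}(r)$, so applying $\D_A$ and symmetrising over $A,B_1,\dots,B_r$ (the first $r+1$ tractor slots of a $(r,r)$ tensor, against which symmetrisation already kills it since the $(r+1,r-1)$ component is absent) one gets $\D_{(A}\D_{B_1}\cdots\D_{B_r)}\mathcal L(k)=0$ exactly as in the proof of Theorem~\ref{key3} via Corollary~\ref{conseq}. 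In the curved case every commutation of $\D$'s that was used to reach this conclusion costs a $W_{AB}\sharp$ term rather than $0$, exactly as in the $(\Leftarrow$ part of$)$ Proposition~\ref{key2}; collecting all of these into a single operator defines $\operatorname{{\bf Curv}}(K)$. The order bound is the elementary weight/valence count: $\D_A\mathcal L(k)$ has tractor rank $2r+1$ and weight $r-1$, each $W$ contributes rank $2$ and weight $0$, each surviving $\D$ contributes rank $1$ and weight $-1$; with at least one $W$ present this forces at most $r-1$ further $\D$'s, and in fact the term with no $\D$'s at all on the curvature is excluded because $\mathcal L(k)$ already has weight $r$ not $r-1$ — giving order at most $r-1$. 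The converse direction is formal: if $\D\mathcal L(k)=\operatorname{{\bf Curv}}(K)$ then running the same computation backwards, together with Proposition~\ref{key2}, recovers that $k$ is Killing; here one uses that $\operatorname{{\bf Curv}}$, being built entirely from curvature, contributes nothing to the top-order part, so the argument of Proposition~\ref{key2}$(\Leftarrow)$ applies verbatim.

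For the final claim~\eqref{t}, contract $X^{B_1}\cdots X^{B_r}$ into the identity $\D_A\mathcal L(k)=\operatorname{{\bf Curv}}(K)$. On the right, $\operatorname{{\bf Curv}}(K)$ has every term containing a $W$-curvature factor (possibly $\D$-differentiated) with at most $(r-1)$ $\D$'s in total; the $r$ $X$'s are contracted using \eqref{DX}, $X^BK_{B\cdots C}=0$, $[\D_A,X^B]=\delta_A{}^B$, and the fact that $X$ into any lower slot of $W$ is zero \eqref{XW0} — but $(r-1)$ of the $X$'s can be absorbed against $\D$'s, leaving at least one $X$ that must hit either $K$ or $W$ directly, killing every term. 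Hence $X^{B_1}\cdots X^{B_r}\operatorname{{\bf Curv}}(K)=0$, so $X^{B_1}\cdots X^{B_r}\D_A\mathcal L(k)_{B_1\cdots B_rC_1\cdots C_r}=0$, which is~\eqref{t}.

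The main obstacle is the bookkeeping in the forward direction: establishing rigorously that the ``same formal computation with $[\D,\D]$ replaced by $W\sharp$'' produces an operator in which the $W$-curvature genuinely appears in the coefficient of \emph{every} term — i.e.\ that no curvature-free residue survives. This is exactly the content of the remark in Proposition~\ref{key2} that the flat-case identities ($[\D,\D]=0$ and \eqref{tkill}) are the \emph{only} ingredients needed to reach the flat conclusion, so every discrepancy from the flat case is traceable to a single $W\sharp$ insertion; I would make this precise by the same term-by-term tracking used there, and the order/weight count above is what guarantees the process terminates with the stated bound.
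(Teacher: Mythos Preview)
Your approach is essentially the paper's: apply $\D_A$ to \eqref{want}, show the leading $\D_A\D_{B_1}\cdots\D_{B_r}K$ vanishes modulo curvature by redoing the flat computation of Theorem~\ref{key3} with $[\D,\D]$ replaced by $W\sharp$, and handle \eqref{t} by the $X$-counting argument. The forward direction and your proof of \eqref{t} are correct and match the paper (the paper contracts the $C$-block rather than the $B$-block, but by the $(r,r)$ symmetry of $\mathcal L(k)$ this is immaterial). One minor slip: in the flat reduction you write ``$\D_{(A}\D_{B_1}\cdots\D_{B_r)}\mathcal L(k)=0$'', but the object being shown to vanish is $\D_A\D_{B_1}\cdots\D_{B_r}K_{C_1\cdots C_r}$ (acting on $K$, and the full tensor, not a symmetrisation); the mechanism is that it lies in $\bigl(\cT_{(r+1)}\otimes\cT_{(r)}\bigr)\cap\bigl(\cT_{(r)}\otimes\cT_{(r,1)}\bigr)=\{0\}$ by Corollary~\ref{conseq}.

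There is, however, a genuine gap in your converse. You write that ``running the same computation backwards'' together with Proposition~\ref{key2}$(\Leftarrow)$ gives $k$ Killing because $\operatorname{{\bf Curv}}$ ``contributes nothing to the top-order part''. This does not work: Proposition~\ref{key2}$(\Leftarrow)$ takes as hypothesis that $\D^rK+\operatorname{{\bf Kurv}}(K)\in\cT_{(r,r)}$, and there is no direct route from \eqref{want2} back to that statement. The paper instead derives the converse \emph{from} \eqref{t}: once you have shown (as you correctly do) that \eqref{want2} forces $X^{B_1}\cdots X^{B_r}\D_A\mathcal L(k)_{B_1\cdots B_rC_1\cdots C_r}=0$, you apply the first part of Proposition~\ref{newprop} with $L=\mathcal L(k)$ to conclude that $K$ satisfies \eqref{tkill}, i.e.\ $k$ is Killing. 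So the logical order is \eqref{want2} $\Rightarrow$ \eqref{t} $\Rightarrow$ $k$ Killing, and your separate handwave for the converse should be replaced by this one-line appeal to Proposition~\ref{newprop}.
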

\begin{proof}

($\Rightarrow$) 
Suppose that $k$ solves (\ref{kill}).  
We apply $\D_A$ to both sides of (\ref{want}). This yields 
\begin{equation}\label{saviour}
\D_A{\mathcal L}(k)= \D_A \D_{B_1}\cdots \D_{B_r} K_{C_1\cdots C_r} + \D_A \operatorname{{\bf Kurv}}(K)_{B_1\cdots B_rC_1\cdots B_r} .
\end{equation}
In the case when $\nabla$ is projectively flat the first term on the
right can be shown to be zero by a formal calculation using just the
identities $[\D_A,\D_B]=0$ and $\D_{(A_0}K_{A_1\cdots A_r)}=0$. This
follows from the proof of Theorem \ref{key3}. Performing the same
formal calculation, but now instead replacing the commutator of $\D$'s
with $[\D_A,\D_B]=W_{AB}\sharp$ and combining the result with the
second term on the right hand side yields the result: $\D {\mathcal L}(k)$ is equal
to a specific formula for a linear differential operator
$\operatorname{\bf Curv}$ on $K$ that is constructed polynomially, and
with usual tensor operations, involving just the $W$-curvature, and
the Thomas-$\D$ operator. Thus by construction it is projectively
invariant, and also by construction (or weight arguments) the order claim follows.

($\Leftarrow$) We suppose now that \nn{want2} holds with $k\in
\Gamma(S^rT^*M(2r))$,  $K$ as in \nn{Kdef} and with the operator $\operatorname{\bf Curv}$ given by the formula found the first part of the proof. So we have
$$
\D_A{\mathcal L}(k)_{B_1\cdots B_rC_1\cdots C_r}= \operatorname{\bf Curv}(K)_{AB_1\cdots B_rC_1\cdots C_r} .
$$ Note that contraction of $X^{C_1}\cdots X^{C_r}$ annihilates the
right hand side by an easy analogue of the argument used in the second
part of the proof of the Proposition \ref{key2} above: in this case
there are at most $(r-1)$ many $\D$ operators in any term but we are
contracting in $\otimes^r X$, so in each term an $X$ is contracted
directly into and undifferentiated $K$ or $W$. The result now follows
by the argument used in second part of the proof of Theorem \ref{key3}
for the projectively flat case. Thus we have just shown that we  have
the equation
(\ref{t}).
Then the result follows from the first part of Proposition \ref{newprop}.
\end{proof}

For the proof of the main theorem we recall the following fact, which follows from the theory of overdetermined systems of PDE.
\begin{lemma}\label{fibrelemma}
For every $T\in \cT_{(r,r)}|_x$, where $x\in M$, there is a local section 
$k\in \Gamma ( \mathcal S^r T^*M|_U)$, such that $T={\mathcal L}(k)|_x$.
\end{lemma}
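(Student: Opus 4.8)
The plan is to recognise $\mathcal L$ as a first BGG splitting operator and to invoke the standard fact that such operators are surjective at each point, but since the paper deliberately avoids the BGG machinery I would instead argue directly by exhibiting $k$ as a jet. First I would observe that $\mathcal L$ is a differential operator of order $r$ whose leading symbol, composed with $X^{B_1}\cdots X^{B_r}W^{C_1}{}_{c_1}\cdots W^{C_r}{}_{c_r}$, is (a nonzero multiple of) the identity on $S^rT^*M(2r)$, by Proposition~\ref{splitprop} and Proposition~\ref{splitprop1}. Concretely, $\mathcal L(k)$ at a point $x$ depends only on the $r$-jet $j^r_x k$, and the composite
\[
j^r_x k \;\longmapsto\; \mathcal L(k)|_x \;\longmapsto\; c\,k(x)
\]
is the identity up to the nonzero constant $c$; so the pointwise values $\mathcal L(k)|_x$, as $k$ ranges over germs at $x$, already fill out a subspace that surjects onto $S^rT^*M(2r)|_x$ under a fixed linear map. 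The content of the lemma is the stronger claim that this subspace is \emph{all} of $\cT_{(r,r)}|_x$.

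The key step is therefore a dimension/symbol count. I would set up the prolongation connection on $\cT_{(r,r)}$ coming from Proposition~\ref{preresult} (or, equivalently, work with the finite-type overdetermined system \nn{kill} itself) and argue as follows. The Killing equation \nn{kill} is of finite type; its prolonged system is a connection on a bundle whose fibre, by the flat model, is exactly $\V_{(r,r)}=\cT_{(r,r)}|_x$. Concretely, in the projectively flat case Theorem~\ref{key3} identifies parallel sections of $\cT_{(r,r)}$ with Killing tensors, so the solution space of \nn{kill} on flat space has dimension $\dim\V_{(r,r)}=\operatorname{rank}\cT_{(r,r)}$; and the flat model is homogeneous, so the evaluation map $k\mapsto\mathcal L(k)|_x$ from the (full-dimensional) solution space to the fibre $\cT_{(r,r)}|_x$ is injective (as $\mathcal L$ is a splitting operator, hence has no kernel) between spaces of equal dimension, hence bijective. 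This proves the lemma in the flat case. For the general curved case I would pass to local sections that solve the Killing equation only \emph{to high order} at $x$: since \nn{kill} is finite type, for each prescribed value in the fibre of the prolonged bundle there is a local section $k$ whose prolonged jet realises that value at $x$ (Cauchy--Kovalevskaya / the standard existence statement for the prolonged connection with prescribed initial data at a point, exactly the "theory of overdetermined systems" cited), and then $\mathcal L(k)|_x$ equals that prescribed fibre element up to the identification of the prolonged bundle with $\cT_{(r,r)}$.

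The cleanest route, and the one I would actually write, is the second: treat \nn{kill} as a linear overdetermined PDE of finite type, let $E\to M$ be the (vector) bundle with connection whose parallel sections are solutions (this exists by general prolongation theory, or one can use the explicit $\D\mathcal L$-connection of the sequel), note $\operatorname{rank}E=\dim\V_{(r,r)}$ from the flat model, and use that a linear connection admits, for any point $x$ and any $v\in E_x$, a local section with that value at $x$ solving the ODE-in-all-directions (parallel transport along radial curves, or power-series in normal coordinates); call this section $\hat k$, whose "$k$-component" is a local $k\in\Gamma(S^rT^*M(2r))$ solving \nn{kill}. Finally, identify $E_x$ with $\cT_{(r,r)}|_x$ via $\mathcal L$ using Proposition~\ref{preresult}: on solutions, $\mathcal L(k)$ is parallel for the $\D\mathcal L$-connection, and the map $k\mapsto\mathcal L(k)$ from solutions to parallel sections of $\cT_{(r,r)}$ is the required isomorphism of bundles-with-connection, so prescribing $\mathcal L(k)|_x=T$ is possible for every $T\in\cT_{(r,r)}|_x$.

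The main obstacle is making precise that the prolonged bundle for \nn{kill} really has fibre $\cT_{(r,r)}$ in the curved case — i.e.\ that the symbol count is the same as in the flat model — without circularity: one cannot yet invoke the main theorem of the paper (which constructs this connection) since the present lemma is used in its proof. I would resolve this by citing the general finite-type prolongation theorem (the rank of the prolonged system is determined by the symbol sequence of \nn{kill}, which is pointwise the same as in the flat case, hence equals $\dim\V_{(r,r)}$), and only \emph{then} use Proposition~\ref{preresult} to identify that abstract prolonged bundle with $\cT_{(r,r)}$ via $\mathcal L$. With the rank matched, surjectivity of $k\mapsto\mathcal L(k)|_x$ is immediate from the existence of solutions with arbitrary prescribed prolonged initial data at $x$.
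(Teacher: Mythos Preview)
Your flat-case argument matches the paper's: every $T\in\cT_{(r,r)}|_x$ extends to a parallel section, which by Theorem~\ref{key3} equals $\mathcal L(k)$ for some Killing $k$.

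For the curved case your route has genuine gaps. The claim that the $k$-component of the radially-parallel-transported section $\hat k\in\Gamma(E)$ solves \nn{kill} is false: only \emph{globally} parallel sections of the prolonged bundle project to solutions, and in the curved setting such sections need not exist for prescribed initial value. More seriously, even granting a section $k$ whose prolonged $r$-jet at $x$ realises a prescribed $v\in E_x$, you still need the induced linear map $E_x\to\cT_{(r,r)}|_x$, $v\mapsto\mathcal L(k)|_x$, to be an isomorphism. Proposition~\ref{preresult} does not supply this --- it is a differential characterisation of solutions, not a pointwise bundle identification --- and rank-matching only reduces the question to injectivity or surjectivity, neither of which you establish without implicitly invoking Theorem~\ref{preresultthma}, which would be circular.

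The paper's argument is far more direct and bypasses the prolonged bundle entirely. Note first that the lemma does not require $k$ to be Killing; one only needs the linear map $J^r(S^rT^*M(2r))|_x\to\cT_{(r,r)}|_x$ given by $j^r_xk\mapsto\mathcal L(k)|_x$ to be surjective. In any scale $\nabla\in\bp$ the formula for $\mathcal L$ in the curved case is the flat-case formula plus terms that involve curvature and are of strictly lower differential order in $k$. Both $J^r|_x$ and $\cT_{(r,r)}|_x$ carry natural filtrations (by jet order, respectively by the tractor composition series), $\mathcal L$ is filtered, and the curvature corrections strictly lower filtration degree; hence the associated graded map coincides with that of the flat case, and the surjectivity already established there is inherited.
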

\begin{proof}
  In the case of (projectively) flat $(M,\bp)$ this follows at once
  from the fact that in the flat case for $L\in \Gamma(T_{(r,r)})$ we have shown that
  $\nabla L=0$ implies $L={\mathcal L}(k)$. 

  For the general case the result then follows as the formula for the
  operator ${\mathcal L}(k)$ generalises that from the flat case by the simply
  the addition (at each order) of lower order curvature terms.
  \end{proof}

Now we state and prove the main results of the paper.
%
\begin{thm} \label{preresultthma}
Let $(M,\bp)$ be a projective manifold. Then there is a specific section $\mathcal R_A\sharp \in \cT^*M\otimes  \operatorname{End}(\cT_{(r,r)})$ 
(where we suppress the endomorphism indices) such that $X^A\mathcal R_A\sharp=0$ and such that 
the differential splitting operator $\mathcal L:\Gamma(S^rT^*M(2r)) \to \Gamma( \cT_{(r,r)}) $ gives an isomorphism between Killing tensors of rank $r$ and sections $L$ of the bundle $\cT_{(r,r)}$ that satisfy the 
 the equation 
\begin{equation}\label{want3a} 
 \D_A L =  \mathcal R_A  \sharp L.
\end{equation}
\end{thm}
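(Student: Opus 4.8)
The strategy is to take the invariant connection to be the normal tractor connection on $\cT_{(r,r)}$ corrected by an algebraic term, and to identify the correction term so that, when acting on $\mathcal L(k)$, the defining equation \eqref{want3a} reduces exactly to the condition in Proposition \ref{preresult}. More precisely, recall that for a weight-$0$ section $L$ of $\cT_{(r,r)}$ the Thomas-$\D$ operator satisfies $\D_A L = Z_A{}^a \nabla_a L$, so the condition $\D_A L = 0$ is equivalent to $\nabla_a L=0$; thus, writing $\mathcal R_A\sharp$ for the desired correction, equation \eqref{want3a} is the parallelism condition for the modified connection $\nabla_a - (\text{the }Z_A{}^a\text{-component of }\mathcal R_A\sharp)$. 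The point is that $\D {\mathcal L}(k) = \operatorname{{\bf Curv}}(K)$ by Proposition \ref{preresult}, and $\operatorname{{\bf Curv}}$ has the $W$-curvature in every coefficient. So the first step is to rewrite $\operatorname{{\bf Curv}}(K)$, for $k$ a Killing tensor, as an algebraic operator applied to $\mathcal L(k)$ itself, rather than to $K$.

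To do this I would use that $\mathcal L$ is a differential splitting operator (Proposition \ref{splitprop}, \ref{splitprop1}): there is a nonzero constant $c$ with $X^{B_1}\cdots X^{B_r}\Wp{C_1}{c_1}\cdots\Wp{C_r}{c_r} \mathcal L(k)_{B_1\cdots B_r C_1\cdots C_r} = c\, k_{c_1\cdots c_r}$, hence $K_{C_1\cdots C_r} = c^{-1} X^{B_1}\cdots X^{B_r} \mathcal L(k)_{B_1\cdots B_r C_1\cdots C_r}$. Substituting this into the formula for $\operatorname{{\bf Curv}}(K)$ expresses $\operatorname{{\bf Curv}}(K)$ as a differential operator applied to $L := \mathcal L(k)$; but since $L$ has weight $0$, every $\D$ hitting $L$ (after the $X$'s are absorbed or commuted past using $[\D_A,X^B]=\delta_A^B$, $X^A\D_A V = wV$, and $X^A L_{A\cdots}=0$) is really a $Z\cdot\nabla$, and the point is that it has weight and tractor-valence reasons forbidding surviving $\nabla$'s — one needs a weight/valence count exactly as in the proof of Proposition \ref{splitprop}. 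The upshot: $\operatorname{{\bf Curv}}(K) = \mathcal R_A\sharp L$ for a canonically determined $\mathcal R_A\sharp \in \Gamma(\cT^*M\otimes\operatorname{End}(\cT_{(r,r)}))$ built polynomially from the $W$-curvature, its $\D$-derivatives, and $X$, $Z$, $W$. That $X^A\mathcal R_A\sharp = 0$ follows since $X^A\D_A \mathcal L(k)$ has weight $r\cdot 0=0$ but also equals $r!$ times something of the form $X^B K_{B\cdots}$-type contractions — more directly, $X^A\operatorname{{\bf Curv}}(K) = 0$ because every term of $\operatorname{{\bf Curv}}$ carries a $W$-curvature index that kills $X^A$ by \eqref{XW0}, together with $X^AK_{A\cdots}=0$; since at most $r-1$ of the $\D$'s can be converted to $X$-eliminators, the valence argument forces an $X$ into a $W$ or an undifferentiated $K$. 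So the corrected connection $\D_A - \mathcal R_A\sharp$ is projectively invariant (as both pieces are) and is a genuine prolongation connection.

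For the isomorphism: if $k$ solves \eqref{kill} then $L=\mathcal L(k)$ solves \eqref{want3a} by the computation above combined with Proposition \ref{preresult}. Conversely, suppose $L\in\Gamma(\cT_{(r,r)})$ solves \eqref{want3a}. First contract \eqref{want3a} with $X^{B_1}\cdots X^{B_r}$: the right side vanishes (same $X^A\mathcal R_A\sharp=0$-style argument, applied to all the $B$-slots using the valence count), so one obtains $X^{B_1}\cdots X^{B_r}\D_A L_{B_1\cdots B_r C_1\cdots C_r}=0$, which is exactly hypothesis \eqref{newprophyp}; hence by the first part of Proposition \ref{newprop}, $K_{C_1\cdots C_r} := X^{B_1}\cdots X^{B_r}L_{B_1\cdots B_r C_1\cdots C_r}$ defines a $k\in\Gamma(S^rT^*M(2r))$ with $\D_{(A}K_{B\cdots C)}=0$, i.e.\ $k$ is a Killing tensor by Proposition \ref{pp}. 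It remains to see that the $L$ we started from is recovered as $\mathcal L(k)$, i.e.\ that $\mathcal L$ maps onto the solution space of \eqref{want3a} bijectively. Injectivity of $\mathcal L$ is immediate since it is a splitting operator. For surjectivity, note that by Lemma \ref{fibrelemma} and the fact that both $L$ and $\mathcal L(k)$ (with $k$ as above from $L$) solve \eqref{want3a}, the difference $L - \mathcal L(k)$ solves \eqref{want3a} and has $X^{B_1}\cdots X^{B_r}(L-\mathcal L(k))_{B_1\cdots} = K - K = 0$; but \eqref{want3a} is a connection, so $L-\mathcal L(k)$ is parallel for $\D_A-\mathcal R_A\sharp$, and a parallel section all of whose top slots are $X$-orthogonal and whose $X$-contraction vanishes must vanish identically on a connected manifold once it vanishes to the appropriate order — here one runs the argument of Proposition \ref{newprop}'s second half, applying further $\D$'s and commuting past $X$'s, to show that $L-\mathcal L(k)$, having vanishing $K$-part, is forced to be zero.

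**Main obstacle.** The delicate point is the passage, in the forward direction, from $\operatorname{{\bf Curv}}(K)$ to $\mathcal R_A\sharp\mathcal L(k)$: one must check that after substituting $K$ in terms of $X$-contractions of $L=\mathcal L(k)$ and commuting all $\D$'s past the $X$'s, no genuine differential operator survives on $L$ — that is, every term truly acts algebraically. This is plausible by the same weight-and-valence bookkeeping that underlies Propositions \ref{splitprop} and \ref{key2} (each $\D$ applied to $L$ must be "used up" against an $X$ or else the tractor valence is wrong), but writing it cleanly — tracking how many $\D$'s sit on the $W$-curvature factors versus on $L$, and confirming the $X$-count always suffices — is where the real work lies. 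Relatedly, one must verify that the resulting $\mathcal R_A\sharp$ genuinely lands in $\operatorname{End}(\cT_{(r,r)})$, i.e.\ preserves the $(r,r)$-Young symmetry type, which follows because $\D\mathcal L(k)$ and $\operatorname{{\bf Curv}}(K)$ are computed as $\D$ applied to a $\cT_{(r,r)}$-section and the output, after imposing \eqref{tkill}, lies in $\cT_{(r,1)}(2r-1)\otimes(\text{stuff})$ projected appropriately — but since we only ever evaluate $\mathcal R_A\sharp$ on sections of the form $\mathcal L(k)$ and, by Lemma \ref{fibrelemma}, these fill each fibre of $\cT_{(r,r)}$, the endomorphism is well-defined on all of $\cT_{(r,r)}$.
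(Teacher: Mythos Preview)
Your overall architecture matches the paper's: part (A) establishes that $\mathcal L(k)$ satisfies \eqref{want3a} for a specific $\mathcal R_A\sharp$, and part (B) shows that any solution of \eqref{want3a} arises this way. You also correctly identify Lemma \ref{fibrelemma} as the reason $\mathcal R_A\sharp$ is well-defined on all of $\cT_{(r,r)}$ and satisfies $X^A\mathcal R_A\sharp=0$. However, there are two genuine gaps.

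\textbf{The main gap (forward direction).} Your proposed mechanism for turning $\operatorname{{\bf Curv}}(K)$ into an \emph{algebraic} operator on $L=\mathcal L(k)$ --- substitute $K=c^{-1}X^{B_1}\cdots X^{B_r}L$ and commute $\D$'s past $X$'s --- does not work by weight/valence alone. A term in $\operatorname{{\bf Curv}}(K)$ of the form $(\text{curvature})\cdot\D^sK$ becomes, after substitution, a sum containing $X^{r-j}\D^{s-j}L$ for all $0\le j\le s$; in particular the $j=0$ piece $X^{B_1}\cdots X^{B_r}\D^{s}L$ carries genuine derivatives of $L$. All of these terms have identical weight and valence, so no bookkeeping of that sort can rule them out. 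What the paper does instead is use Proposition \ref{key2} (equation \eqref{want}) in the form
\[
\D_{B_1}\cdots\D_{B_s}K_{C_1\cdots C_r} \;=\; c\, X^{B_{s+1}}\cdots X^{B_r}\mathcal L(k)_{B_1\cdots B_rC_1\cdots C_r} \;+\; \operatorname{{\bf Curv}}^{(s)}(K),
\]
where $\operatorname{{\bf Curv}}^{(s)}$ has order at most $s-2$ in $K$. This relation is obtained by contracting \eqref{want} with $X^{r-s}$ and uses the Killing condition; it is \emph{not} a consequence of the naive substitution of $K$. Substituting it into $\operatorname{{\bf Curv}}(K)$ and iterating strictly lowers the $\D$-order on $K$ at each step, so the process terminates in a purely algebraic expression in $L$. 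You flagged this passage as the ``main obstacle,'' and it is; the resolution is this iterative use of \eqref{want}, not a valence count.

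\textbf{A secondary gap (converse direction).} To conclude that a solution $L$ of \eqref{want3a} with $X^{B_1}\cdots X^{B_r}L=0$ must vanish, you invoke ``the argument of Proposition \ref{newprop}'s second half.'' But that argument assumes $\D_AL=0$, whereas here $\D_AL=\mathcal R_A\sharp L$. The paper proves a dedicated descent lemma: if $X^{B_1}\cdots X^{B_k}L=0$ then $X^{B_1}\cdots X^{B_{k-1}}L=0$, by applying $\D$ and showing that $X^{B_1}\cdots X^{B_k}\mathcal R_A\sharp L$ vanishes under the hypothesis. The latter requires knowing the specific structure of $\mathcal R_A\sharp$ --- namely that each term is of the form $\mathcal A^{(s-1)}\otimes(X^{E_1}\cdots X^{E_s}L)$ with $\mathcal A^{(s-1)}$ annihilated by $s$ contractions with $X$ --- which is exactly what the iterative construction in (A) provides. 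Your gloss does not account for the $\mathcal R\sharp L$ term.

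A minor point: your argument that $X^A\operatorname{{\bf Curv}}(K)_A=0$ via ``$X^A$ hits a $W$ or an undifferentiated $K$'' is not right (with only one $X$ there may well be a $\D_A$ available to absorb it). The correct reason is simply $X^A\D_AL=0$ for weight-zero $L$, combined with Lemma \ref{fibrelemma} to promote this from $\mathcal L(k)$ to all of $\cT_{(r,r)}$ --- which you do eventually say.
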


\begin{proof}
Again, the splitting operator $\mathcal L$ is injective. Hence, we have to show the following:
\begin{enumerate}
\item[(A)] For every  Killing tensor $k$ the image $\mathcal L(k)$ satisfies  equation (\ref{want3a}) with a specific  $\mathcal R_A\sharp \in \cT^*M\otimes  \operatorname{End}(\cT_{(r,r)})$ that will be determined;
  \item[(B)] $\mathcal L$ restricted to Killing tensors (i.e.\ the
    solutions of \eqref{kill}) is surjective onto the sections $L$
    that satisfy equation (\ref{want3a}), where the right hand side is
    as determined in (A).
\end{enumerate}

 We prove (A): Assume that $k$ solves \nn{kill}. Then we have equation \nn{want2},   
  $$
  \D {\mathcal L}(k)=  \operatorname{{\bf Curv}}(K).
  $$ from Proposition \ref{preresult}.  
  The operator $\operatorname{{\bf Curv}}$ is given by a formula
  polynomial  in the $W$-curvature, its $\D$ derivatives, and the
  Thomas-$\D$ operators up to order $(r-1)$. Now observe that  each term of the form
  $\D_{B_1}  \cdots \D_{B_s}K_{C_1\cdots C_r}$, for $0\le s <r$ can be replaced  
using \eqref{want} from Proposition~\ref{key2}, 
  \[
  \D_{B_1}  \cdots \D_{B_s}K_{C_1\cdots C_r}
  =
  c\  X^{s+1}\cdots X^r \mathcal L(k)_{B_1\cdots B_r C_1\cdots C_r}
  +
   \operatorname{{\bf Curv}}^{(s)}(K),
   \]
  where $ \operatorname{{\bf Curv}}^{(s)}$ is a differential operator 
  is given by a formula
  polynomial  in the $W$-curvature, its $\D$ derivatives, and the
  Thomas-$\D$ operators up to order $(s-2)$. In this way we can successively eliminate all applications of $\D$ to $K$ by terms algebraic in $\mathcal L(k)$
arriving at an equation of the form
  \begin{equation}\label{want3c}
  \D_A {\mathcal L}(k)={\mathcal R}_A\sharp {\mathcal L}(k),\quad\text{with ${\mathcal R}_A\in \Gamma(\cT^* \otimes
  \operatorname{End}(\otimes^{2r}\cT^*))$,}\end{equation}
 given by a
  polynomial in the $W$-curvature and its $\D$-derivatives. 
  Now we have to verify: 
  \begin{enumerate}
  \item[(i)] that  ${\mathcal R}_A\sharp$ is indeed a section of $\cT^* \otimes
  \operatorname{End}(\cT_{(r,r)})$, and 
  \item[(ii)] that for every $L\in \cT_{(r,r)}$, the contraction of ${\mathcal R}_A\sharp L$ with $X^A$ is equal to zero.
  \end{enumerate}
In order to verify (i) and (ii) we have to make a key
  observation:
  Although we phrased the discussion above in a naive way that
  supposes there is a solution to \eqref{kill}, in fact to derive
  \eqref{want3c} we do not actually require that there exist
  solutions, even locally, to the equation~\eqref{kill}.  Equation
  \eqref{want3c} simply expresses relations on the jets, of a section
  $k\in \Gamma(S^rT^*M(2r))$ that are formally determined by a finite
  jet prolongation of the Killing equation \eqref{kill}. It is clear that we can  derive \eqref{want3c} at any point $x\in M$ by working with
  just the $r+1$-jet, $j^{r+1}_xk$, of $k$ at $x$. Following the argument as above, but working
  formally with such jets and assuming \eqref{kill} holds to  order $r$ at $x$, we come to
\begin{equation}\label{want3ax}
\D_A L|_x=  \mathcal R_A  \sharp {\mathcal L}(k)|_x
\end{equation}
where all curvatures and their derivatives are evaluated at $x$.  From
the results in the projectively flat case we know that this is exactly
the point where the prolongation of the finite type PDE \eqref{kill}
has closed: The prolongation up to order $r$ may be viewed as simply
the introduction of new variables labelling the part of the jet that
is not constrained by the equation, and these are exactly parametrised
by the elements in the fibre $\cT_{(r,r)}(x)$. At the next order
the derivative of these variables is expressed algebraically in terms
of the variables from $\cT_{(r,r)}(x)$. That is (a key part of) the content of \eqref{want3ax}. Viewing this as a computation
in slots (via a choice of $\nabla\in \bp)$ the computation is the same
in the curved case as in the projectively flat case except that
additional curvature terms may enter when derivatives are commuted. It
follows that ${\mathcal L}(k)(x)$ may be an arbitrary element $L$ of
$\cT_{(r,r)}(x)$. Using this, and since contraction with $X^A$ annihilates the left
hand side of \eqref{want3ax} it follows that it annihilates the right hand side for any $L\in T_{r,r}(x)$. Similarly since the left hand side of \eqref{want3ax} is a section of
$(\cT^*\otimes T_{r,r})(x)$ so is the right hand side, for arbitrary $L={\mathcal L}(k)(x)$ and thus (ii) also follows.

\medskip

 Now we prove  (B): Suppose that $L\in
  \Gamma(\cT_{(r,r)})$ satisfies  \nn{want3a}  for  the specific $\mathcal R_A\in\Gamma( \cT^*\otimes \cT_{(r,r)}) $ obtained from the argument above.  
We now claim that 
\begin{equation}\label{want3b} 
X^{B_1}\cdots X^{B_r} ( \mathcal R_C  \sharp L)_{B_1 \cdots B_r C_1\cdots C_r}=0.
\end{equation}
Indeed, 
in the case that $L=\mathcal L(k)$ for a tensor
$k$ that solves \nn{kill}, we know from  Proposition \ref{preresult}  that $X^{C_1}\cdots X^{C_r}$ annihilates
the right hand side of equation (\ref{want3a}) for $\mathcal L(k)$, because then it is simply a rewriting of the
right hand side of \nn{want2}.
However, as mentioned above, at a point $x\in M$  and
  for $k$ satisfying \eqref{kill} to order $r$ at $x$, any element  of
  $\cT_{(r,r)}|_x$ can arise as ${\mathcal L}(k)|_x$ because this is
  the full prolonged system for the overdetermined PDE \eqref{kill}.
Thus it follows that
$X^{C_1}\cdots X^{C_r}$ must annihilate the right hand side of
\nn{want3a} for $L$ even if $L$ is not $\mathcal L(k)$ for a  $k\in \Gamma(S^rT^*M(2r))$ 
satisfying \nn{kill}.

Having established equation~\nn{want3b}, we can apply the first part of Proposition \ref{newprop} to ensure that $L$ determines a Killing tensor $k$. 
Then we have that  $L=\mathcal L(k)$ unless the map 
\[
L_{B_1\cdots B_rC_1\cdots C_r}\mapsto K_{B_1\cdots B_r}=X^{C_1}\cdots X^{C_r}L_{B_1\cdots B_rC_1\cdots C_r}\in \mathcal T_{(r)}(r).\] has a kernel.
To exclude this possibility, assume there is a section $L $ of $\cT_{(r,r)}$ that satisfies 
(\ref{want3a})
and such that \begin{equation}\label{xL0}
X^{C_1}\cdots X^{C_r}L_{B_1\cdots B_rC_1\cdots C_r}=0.\end{equation}
The following lemma shows that this implies the vanishing of $L$.

\begin{lemma} Let $L_{B_1\cdots B_rC_1\cdots C_r}$ be a section of $\cT_{(r,r)}$ that satisfies equation (\ref{want3a}) for the specific $\mathcal R_A\sharp\in \Gamma(\cT^*\otimes \cT_{(r,r)})$. Then we have the following implication:
if 
 \begin{equation}\label{assume}
   X^{B_1}\cdots X^{B_k}L_{B_1\cdots B_k\cdots B_rC_1\cdots C_r}=0\quad\text{ for a $k\in \{1,\ldots , r\}$,}\end{equation}
then 
\[ X^{B_1}\cdots X^{B_{k-1}}L_{B_1\cdots B_{k-1} \cdots B_rC_1\cdots C_r}=0,\] and hence $L_{B_1 \cdots B_rC_1\cdots C_r}=0$.
\end{lemma}
\begin{proof}
Assume that
equation (\ref{assume}) holds.
 Applying $\D_A$, the Leibniz rule for $\D_A$  gives
\begin{equation}
\label{ktok-1}
0=c \ X^{B_1}\cdots X^{B_{k-1}}L_{B_1\cdots B_{k-1}A B_{k+1}\cdots  B_rC_1\cdots C_r}+
X^{B_1}\cdots X^{B_k}\D_A
L_{B_1\cdots B_rC_1\cdots C_r},\end{equation}
with a nonzero constant $c$. Hence,  we have to show that equation (\ref{assume})
 implies 
\begin{equation}\label{hts}
X^{B_1}\cdots X^{B_k}\D_A
L_{B_1\cdots B_k\cdots B_rC_1\cdots C_r}=0,
\end{equation} by using equation (\ref{want3a}) and the specific form of $\mathcal R_A\sharp$. The proofs of the previous propositions and of~(A) provide us with the following information about $\mathcal R_A\sharp$:  In Proposition \ref{preresult} we have seen that
the expression $\operatorname{{\bf Curv}}(K)$ was of order at most $(r-1)$ in $\D$ and  is a linear combination of in terms of the form 
$\mathcal A^{(s-1)}\otimes \D^{r-s} K$ for $1\le s\le r$ and where $\mathcal A^{(s-1)}$ is a tractor of valence $s$ containing at most $s-1$ applications of $\D$ to the tractor curvature $W$. 
Then in (A) of the present proof we have expressed the terms $\D^{r-s} K$ by an $s$-fold contraction  of $\mathcal L(k)$ with  $X$. Hence $\mathcal R_A\sharp L$ is a linear combination of terms of the form 
\begin{equation}\label{form}
\mathcal A^{(s-1)}\otimes \mathcal B^{(s)},\end{equation} where
$\mathcal B^{(s)}$ is of the form $X^{E_1}\cdots X^{E_s}L_{E_1\cdots
  E_s E_{s+1}\cdots E_rC_1\ \cdots C_r}$. Because of (\ref{assume}),
the only terms that are nonzero in $\mathcal R_A\sharp L$ are those of
the form (\ref{form}) with $s<k$. Hence the terms $A^{(s-1)}$ contain
at most $(k-2)$ $\D$-derivatives of the tractor curvature. Now since
$X^AW_{AB}=0$ and therefore $X^A \D_{B}W_{AC}= - W_{BC}$,
each of the
$\mathcal A^{(s-1)}$ is annihilated by $s$ contractions with
$X$. Hence the only terms of the form (\ref{form}) that are non zero
when contracted with $k$ many $X$'s must have at least $(k+1-s)$
contractions with $X$ at $B^{(s)}$, which already is obtained by $s$
contractions with $X$. Hence the only terms $\mathcal B^{(s)} $ that
may remain nonzero when contracted with $(k+1-s)$ many $X$'s are of
the form
\begin{equation}\label{zero}
X^{B_1}\cdots X^{B_s}X^{C_1}\cdots X^{C_{k+1-s}}
L_{B_1\cdots B_s \cdots B_rC_1 \cdots C_{k+1-s}\cdots C_r}.
\end{equation}
Now an induction over $s$ shows that these terms are actually zero. 
In fact, for $s=1$ this follows from the assumpion (\ref{assume}). If $s>1$ we use that $L\in \cT_{(r,r)}$ to get 
\begin{eqnarray*}
\lefteqn{
X^{B_1}\cdots X^{B_s}X^{C_1}\cdots X^{C_{k+1-s}}
L_{B_1\cdots B_rC_1 \cdots  C_r}=}\\
&=&-\sum_{i=1}^r
X^{B_1}\cdots X^{B_s}X^{C_1}\cdots X^{C_{k+1-s}}
L_{B_1\cdots B_{s-1}C_iB_{s+1}  \cdots B_rC_1 \cdots C_{i-1}B_s  C_{i-1}\cdots C_r}\\
&=&
-(k+1-s)\ X^{B_1}\cdots X^{B_s}X^{C_1}\cdots X^{C_{k+1-s}}
L_{B_1\cdots B_rC_1 \cdots  C_r}
\end{eqnarray*}
by the induction hypothesis. This shows that the terms in (\ref{zero}) are indeed zero and finishes the proof of the lemma.
  \end{proof} 
  This shows that every $L\in \Gamma(\cT_{(r,r)})$ that satisfies equation (\ref{want3a}) is the image of a Killing tensor under the splitting operator $\mathcal L$. This finishes the proof of (B) and hence of the theorem.
   \end{proof}
 
Rewriting the result of this theorem in terms of the tractor connection gives:
\begin{cor}\label{maincor}
Let $(M,\bp)$ be a projective manifold. Then 
there is a projectively invariant section $\mathcal Q_a\sharp \in\Gamma( T^*M\otimes  \operatorname{End}(\cT_{(r,r)})$ 
such that
the splitting operator $\mathcal L$ gives an isomorphism
between weighted Killing tensors of rank $r$ and sections
$L\in \Gamma( \cT_{(r,r)})$  that satisfy satisfies the equation 
\begin{equation}\label{want3t} 
 \nabla^\cT_a L=  \mathcal Q_a\sharp L,
\end{equation}
or equivalently, sections $L$ that are  parallel for connection 
\begin{equation}\label{wantconnection}
 \nabla^\cT_a-   \mathcal Q_a\sharp.\end{equation}
\end{cor}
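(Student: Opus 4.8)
The plan is to read off Corollary~\ref{maincor} from Theorem~\ref{preresultthma} by re-expressing the tractor equation \eqref{want3a} in terms of $\nabla^\cT$ rather than $\D$. The key point is that $\mathcal L(k)$ has projective weight $0$, so by the defining formula \eqref{TD} of the Thomas-$\D$ operator, for every $L\in\Gamma(\cT_{(r,r)})$ one has the identity
\[
\D_A L = \Zp{A}{a}\,\nabla^\cT_a L ,
\]
where $\nabla^\cT_a$ denotes the connection induced on $\cT_{(r,r)}\subset\otimes^{2r}\cT^*$ by the (co)tractor connection (this is legitimate, the Young projector $P_{(r,r)}$ being parallel). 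Both sides here are projectively invariant, the $Y_A$-term in \eqref{TD} having dropped out because $L$ has weight $0$.

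First I would set $\mathcal Q_a\sharp := \Wp{A}{a}\,\mathcal R_A\sharp$. A weight count shows this is a section of $T^*M\otimes\End(\cT_{(r,r)})$, and it maps $\cT_{(r,r)}$ to itself because $\mathcal R_A\sharp$ does (Theorem~\ref{preresultthma}). I then claim that, for $L$ of weight $0$, equation \eqref{want3a} is equivalent to \eqref{want3t}. Contracting \eqref{want3a} with $\Wp{A}{a}$ and using $\Zp{A}{b}\Wp{A}{a}=\delta^b_a$ from \eqref{split} gives \eqref{want3t} at once. Conversely, since $X^A\mathcal R_A\sharp=0$ by the Theorem, the section $\mathcal R_A\sharp L$ lies in the image of $\Zp{A}{a}$; the direct sum decomposition of $\ce_A$ associated with the splitting \eqref{split} then yields $\mathcal R_A\sharp L = \Zp{A}{a}\,\Wp{B}{a}\mathcal R_B\sharp L = \Zp{A}{a}\,\mathcal Q_a\sharp L$, and comparing with $\D_A L=\Zp{A}{a}\nabla^\cT_a L$ and using the injectivity of $\Zp{A}{a}$ (contract with $\Wp{A}{b}$) shows that \eqref{want3t} implies \eqref{want3a}. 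The reformulation of \eqref{want3t} as parallelism for the connection \eqref{wantconnection} is tautological.

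The remaining issue — and the only place where anything beyond the bare statement of Theorem~\ref{preresultthma} is needed — is that $\mathcal Q_a\sharp$ does not depend on the choice of $\nabla\in\bp$ used to form $\Wp{A}{a}$, and so is a projectively invariant section. Although $\mathcal R_A\sharp$ is projectively invariant, $\Wp{A}{a}$ is not; but under a projective change of scale \eqref{trans} it changes only by a term proportional to $X^A$ (see \cite{bailey-eastwood-gover94}), which is killed on composition with $\mathcal R_A\sharp$ precisely because $X^A\mathcal R_A\sharp=0$. Hence $\mathcal Q_a\sharp\in\Gamma(T^*M\otimes\End(\cT_{(r,r)}))$ is well defined on $(M,\bp)$, and the isomorphism of Theorem~\ref{preresultthma} between rank-$r$ Killing tensors and solutions $L$ of \eqref{want3a} becomes, through the equivalence above, the asserted isomorphism with the solutions of \eqref{want3t}, equivalently with the sections parallel for \eqref{wantconnection}. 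I do not expect a genuine obstacle: the only real care is in the bookkeeping of weights and of the endomorphism action $\sharp$, together with the (already noted) fact that $\mathcal Q_a\sharp$ preserves $\cT_{(r,r)}$.
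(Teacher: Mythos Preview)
Your argument is correct and is essentially the paper's own proof carried out in greater detail: the paper simply contracts \eqref{want3a} with $\Wp{A}{a}$ and invokes $X^A\mathcal R_A\sharp=0$ for projective invariance, whereas you additionally spell out the converse implication (via $\D_A L=\Zp{A}{a}\nabla^\cT_a L$ and the splitting of $\ce_A$) and the mechanism by which the $X^A$-proportional ambiguity in $\Wp{A}{a}$ is killed. Nothing substantive differs.
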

\begin{proof}
This follows by contracting equation (\ref{want3a}) with $\Wp{A}{a}$ yielding equation (\ref{want3t}) with some $\mathcal Q_a\sharp \in\Gamma( T^*M\otimes  \operatorname{End}(\cT_{(r,r)})$.  Moreover, since 
$X^A\mathcal R_A\sharp=0$, the resulting $\mathcal Q_a$ is projectively invariant.
\end{proof}

\begin{remark}\label{int-app}
  As a final remark we note that there is a considerable gain in
understanding the prolongation of \nn{kill} in the form \eqref{want3t}
(or equivalently \eqref{wantconnection}), rather than simply as some
(possible invariant) connection $\tilde\nabla$ on $\cT_{r,r}$ without the
structure \eqref{wantconnection} (or some equivalent) made explicit. An obvious example of
such a gain is for the explicit computation of integrability
conditions. Given such a connection the standard way to compute
integrability conditions is via the curvature of $\tilde{\nabla}$,
since this must annihilate any section of $\cT_{(r,r)}$ that
corresponds to a solution of \eqref{kill}. However, because the bundle
$\cT_{(r,r)}$ has very high rank (e.g. for $r=2$ it has rank
$n^2(n^2-1)/12$) and the prolongation connection is necessarily very
complicated, computing such curvature is typically  out of reach
without the development of specialised software. However given
\eqref{want3t} we obtain integrability conditions immediately from the
curvature $\kappa$ (see \eqref{tractor_curvature}) of the normal tractor connection: Differentiating \eqref{want3t} with the latter and skewing in the obvious way we obtain 
\begin{equation}\label{int-cond}
2\nabla^{\cT}_{[b}\nabla^\cT_{a]} L= \kappa_{ba}\sharp L= \nabla_{[b} (\mathcal Q_{a]}\sharp L).
\end{equation}
Then using similar ideas to the treatments above, we can expand the
(far) right hand side by replacing any instance of $\nabla^\cT_b L$
with $Q_{b}\sharp L$ and thus, by subtracting $\kappa_{ba}\sharp L$,
obtain at once a projectively invariant 2-form with values in $\End (T_{r,r})$,
that must annihilate any $L(k)$ for $k$ solving \eqref{kill}. Thus the
existence of solutions \ref{kill} constrains the rank of this natural
projective invariant constructed from the tractor curvature and its derivatives. From there one can compute invariants that must vanish following standard ideas, as in e.g. \cite[Section 3]{gover-nurowski04} (applied there to a different problem). 
\end{remark}

\section{Explicit results for low rank} \label{examples}

\subsection{The curved  rank $r=1$ case} The rank one case is well known and here we compare it to our approach. We construct the connection corresponding to the equation
\begin{equation}\label{kv}
\nabla_{(a}k_{b)}=0 \qquad \nabla\in \bp
  \end{equation}
on $k_b\in \Gamma(T^*M(2))$ on a projective manifold $(M,\bp)$.  Following Lemma \ref{incl-lemma} we
form $K_C=\Zp{C}{c} k_c\in \cT^*(1)$, where $k_c$ is a solution of \eqref{kill}, and then according to the definition \nn{Ldef}, set
$$
{\mathcal L}(k)_{BC}: =\D_{[B}K_{C]}.
$$
Consider the case that $k$ is a
solution of \nn{kv}. Then from Proposition \ref{pp}, 
$$
\D_BK_C\in \Gamma (\Lambda^2\cT^*) ,
$$
and because the $W$-tractor satisfies the algebraic Bianchi identity
$W^{\phantom{A}}_{AB}{}^{E}{}_C+W^{\phantom{A}}_{BC}{}^{E}{}_A+W^{\phantom{A}}_{CA}{}^{E}{}_B=0$ we have
$\D_{[A}\D_{B}K_{C]}=0$, that is
$$
\D_A\D_BK_C=[\D_C,\D_B]K_A = -W^{\phantom{A}}_{CB}{}^{E}{}_AK_E .
$$
So for solutions $k$ we have
$$
\D_{A}\D_{[B}K_{C]} - W^{\phantom{A}}_{BC}{}^{E}{}_A X^F\D_{[F}K_{E]}=0.
$$
So $\nabla_a {\mathcal L}(k)_{BC}+ W_{BC}{}^{E}{}_A W^A_aX^F {\mathcal L}(k)_{EF}=0$. 
But for any $k\in \Gamma(T^*M(2))$
$$
X^F\D_{[F}K_{E]} =X^F {\mathcal L}(k)_{FE}= K_E .
$$
Thus the projectively invariant connection on $\Lambda^2\cT^*$ is given by
$$
\nabla_a V_{BC}+ W^{\phantom{A}}_{BC}{}^{E}{}_A \Wp{A}{a}X^F V_{EF} .
$$
It is easily checked that this agrees with the formula \nn{eg} from
the introduction (and so that connection $\overline{\nabla}$ is
projectively invariant).

\subsection{The curved   rank $r=2$ case}\label{curvedrk1sec}
Here we consider the case $r=2$. We will make the computations in Section \ref{c-section} explicit and in particular provide explicit formulae for the curvature tractor fields fields $\mathcal R_A\sharp$ and $\mathcal Q_a\sharp$.

The first observation was established as part of a more involved argument in the second part of the proof of Proposition \ref{preresult}:
\begin{lemma}\label{obs1}
If $K_{DE}\in \Gamma(\cT_{(2)} (2)) $, then 
\[X^DX^E\D_A\D_B\D_CK_{DE}=6\D_{(A}K_{BC)}.\]
In particular, $ X^EX^D\D_A\D_B\D_CK_{DE}$ is totally symmetric.
\end{lemma}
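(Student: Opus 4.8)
The plan is to compute $X^DX^E\D_A\D_B\D_C K_{DE}$ directly by repeatedly using the commutator identity $[\D_F, X^G] = \delta_F{}^G$ from \eqref{[DX]} to pull the two $X$ factors past the three $\D$'s, together with $X^DK_{DE}=0$ from \eqref{Xorth} and the weight identity $X^F\D_F V = wV$ from \eqref{DX}. Since $K_{DE}\in\Gamma(\cT_{(2)}(2))$ has weight $2$, and each application of $\D$ lowers the weight by $1$, the relevant weights along the way are $2,1,0$; I would keep careful track of these because they produce the numerical coefficients.

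First I would move $X^E$ inward: $X^E\D_A\D_B\D_C K_{DE} = \D_A(X^E\D_B\D_C K_{DE}) - \D_A X^E\cdot(\cdots)$, and so on, each commutation either producing a Kronecker delta (which contracts an index into $X^D$ or into a $\D$, eventually hitting $X^DK_{DE}=0$) or landing $X^E$ on $K_{DE}$ to give zero, or landing it as $X^F\D_F$ on something of known weight. Doing this for $X^E$ first and then $X^D$ (or both together), the only surviving contributions are those in which both $X$'s are ``used up'' as weight operators $X^F\D_F$ acting successively, which forces all three $\D$'s to act on the $C_1\cdots C_r$-type slots, i.e. leaves $\D_{(A}\D_B\D_{C)}$-type terms — but more precisely, since $K_{DE}$ is annihilated by either $X$, the leftover is a totally symmetrized object $\D_{(A}K_{BC)}$ (note $\D$ on densities/tractors with the $Z$-projector is effectively torsion-free in the sense of \eqref{Dcomm}, and here no mutual commutation of $\D$'s is needed). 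Collecting the weight factors $2$ and $1$ from the two times an $X^F\D_F$ is applied, together with the count of ways the symmetrization distributes, yields the constant $6 = 3!\,$ (matching $r!$ with $r=2$, consistent with \eqref{recover1} and the computation in \eqref{newcomp}).

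Concretely, the cleanest route is to invoke the mechanism already spelled out in the proof of Proposition~\ref{preresult} (equation \eqref{newcomp} and the paragraph around \eqref{t}): contracting $\otimes^r X$ into $\D^{r}$ applied to $K$ and commuting produces $r!\,\D_{(B_r}K_{C_1\cdots C_r)}$ plus terms killed by $X^{B}K_{B\cdots C}=0$; specializing $r=2$ and then applying one further $\D_A$ and one further contraction with $X$ gives exactly $6\,\D_{(A}K_{BC)}$. I would just rerun that argument keeping the constants.

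The final ``in particular'' is then immediate: $\D_{(A}K_{BC)}$ is manifestly totally symmetric in $A,B,C$, so $X^EX^D\D_A\D_B\D_C K_{DE}$, being $6$ times it, is totally symmetric. The main obstacle is purely bookkeeping — making sure every commutator term that is not obviously zero is tracked and that the numerical coefficient comes out to $6$ rather than, say, $2$ or $12$; the structural reason for symmetry (all the asymmetry is carried by the $X$-contractible ``junk'' terms, which vanish) is clear and requires no new idea beyond \eqref{[DX]}, \eqref{DX}, and \eqref{Xorth}.
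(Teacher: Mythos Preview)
Your overall plan---commute the two $X$'s through the three $\D$'s using $[\D_A,X^B]=\delta_A{}^B$ and then kill terms with $X^EK_{DE}=0$---is exactly the paper's method (the paper records the key identity as $X^C\D_A V_{CB\cdots}=-V_{AB\cdots}+\D_A(X^CV_{CB\cdots})$ and iterates). So the approach is right.

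However, your account of \emph{which} terms survive is backwards, and this matters because it is where the constant $6$ comes from. Here $X^D$ and $X^E$ are contracted into the indices of $K_{DE}$, not into any $\D$-index, so there is never an occurrence of $X^F\D_F$ acting as a weight operator; the weights $2,1,0$ play no role. Each commutation $X^E\D_A=\D_A X^E-\delta_A{}^E$ produces a delta term in which the $E$-slot of $K$ is replaced by $A$, while the $\D_A X^E$ branch just pushes $X^E$ further in until it hits $K$ and dies. Thus the surviving contributions are precisely the delta terms, not weight-operator terms. Concretely,
\[
X^E\D_A\D_B\D_C K_{DE}=-\D_B\D_C K_{DA}-\D_A\D_C K_{DB}-\D_A\D_B K_{DC},
\]
and then contracting $X^D$ through the remaining two $\D$'s in each summand gives two terms apiece, for a total of six terms $\D_{?}K_{??}$ with indices drawn from $\{A,B,C\}$; by symmetry of $K$ these assemble to $6\,\D_{(A}K_{BC)}$. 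So $6=3\times 2$, not $2\times 1\times(\text{combinatorics})$.

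Your appeal to \eqref{newcomp} and the ``$r!$'' argument of Proposition~\ref{splitprop1} is also misplaced: there the $X$'s are contracted into the $\D$-indices (or the $B$-slots of $L$), which is a genuinely different pattern and does use the weight identity. That mechanism does not transplant to the present computation. If you simply carry out the commutations as above, the result and the constant fall out with no further input.
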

\begin{proof} A direct computation using the relation (\ref{[DX]})  implies
\begin{equation}\label{crucial}
X^C\D_A V_{CB\cdots} =\left[X^C,\D_A\right]V_{CB\cdots}+ \D_A(X^CV_{CB\cdots})
=
-
V_{AB\cdots}+ \D_A(X^CV_{CB\cdots}).\end{equation}
This can be used to commute  $X^E$ and $X^D$ past the $\D$'s until $X^EK_{EA}=0$ can be applied.
\end{proof}
Now we study the projection $P:=P_{(2,2)} $ from $\otimes^4\cT ^*$ to $\cT_{(2,2)}$ defined in (\ref{DY}).
If $S_{BCDE}$ is an element in $\otimes^4\cT^*$ that is symmetric in $D$ and $E$, i.e., $S_{BCDE}=S_{BC(DE)}$, then a straightforward computation shows that for $S_{BCDE}\in \otimes^2\cT ^* \otimes \cT_{(2)}$ we have
\begin{equation}\label{proj}
(PS)_{BCDE}
= \tfrac{1}{4}\left(S_{(BC)DE}+ S_{(DE)BC}\right)
-\tfrac{1}{8}\left( S_{(DC)BE} + S_{(EB)CD} + S_{(DB)CE}+ S_{(EC)BD}\right).
\end{equation}
This implies ideed that
\[(S_{(ijk)}PS)_{BCDE}=0,
\]
i.e., the symmetrisation of $PS$ over any three indices $1\le i<j<k \le 4$ vanishes. 

Next, for a section $K_{DE}\in \Gamma (\cT_{(2)} (2) )$ we set
 $S_{BCDE}:=\D_B\D_CK_{DE}$. Note that the differential splittig operator $\mathcal L$ is given by
 $\mathcal L(k)_{bd}=(P\D^2K)_{BCDE}$.
We obtain the following statement, which was already observed in the proof Theorem \ref{key3} and Proposition \ref{preresultthma} for general rank:
\begin{lemma}\label{obs2}
If $K_{DE}\in  \Gamma (\cT_{(2)} (2) )$, then 
\[
X^EX^D\D_A(P\D^2K)_{BCDE}=
\tfrac{1}{4}
X^EX^D\D_A\D_B\D_CK_{DE}
\]
\end{lemma}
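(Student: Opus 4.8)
The plan is to exploit the fact that the Young projector $P=P_{(2,2)}$ from~\eqref{proj} writes $(P\D^2K)_{BCDE}$ as an explicit linear combination of terms of the form $\D_?\D_?K_{??}$ in which the index pair $D,E$ may or may not be among the differentiating indices, and then to contract with $X^EX^D$ and use Lemma~\ref{obs1} together with the identity~\eqref{crucial}. First I would substitute $S_{BCDE}=\D_B\D_CK_{DE}$ into~\eqref{proj}, apply $\D_A$, and then contract the whole thing with $X^DX^E$. The first term, $\tfrac14 S_{(BC)DE}=\tfrac14 \D_{(B}\D_{C)}K_{DE}$, contracted with $X^DX^E$ and differentiated by $\D_A$ gives exactly $\tfrac14 X^DX^E\D_A\D_B\D_CK_{DE}$ after using $X^DX^E\D_{(B}\D_{C)}K_{DE}=X^DX^EK_{DE}$-type reductions — actually more carefully, the key point is that every other term in~\eqref{proj}, once $\D_A$ is applied and $X^DX^E$ is contracted in, vanishes.

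The main computational content is therefore: show that for each of the remaining five terms of~\eqref{proj}, namely $\tfrac14 S_{(DE)BC}$ and the four terms $-\tfrac18 S_{(DC)BE}$ etc., we have $X^DX^E\D_A(\text{term})=0$. I would handle these by repeatedly applying~\eqref{crucial} to push each $X$ past the $\D$'s. Whenever an $X$ reaches a slot of $K$ it is killed by $X^DK_{DE}=0$; the only obstruction to this is the ``leftover'' terms $\D_A(X^C V_{C\cdots})$ generated by the commutator in~\eqref{crucial}, but these reintroduce an $X$ contracted into a slot that is not differentiated, and iterating drives everything to a contraction of $X$ with an undifferentiated $K$-slot, hence to zero. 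Concretely: in $S_{(DE)BC}=\D_{(D}\D_{E)}K_{BC}$ both $X$'s are contracted into derivative slots, so two applications of~\eqref{crucial} (noting $X^C\D_C V=wV$ from~\eqref{DX} when the inner contraction lands on a weighted tractor) collapse it to a multiple of $K_{BC}$, and then $\D_A$ of that, re-contracted, still produces only $X$-into-$K$ or lower-weight terms that vanish; the four mixed terms are similar but have one $X$ going into a slot of $K$ already after the symmetrization is expanded, so they vanish even faster.

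The one subtlety to watch — and the step I expect to be the main obstacle — is bookkeeping the commutators of $\D_A$ with $\D_B,\D_C$, since $K$ is a tractor of nonzero weight $2$ and in the curved setting $[\D_A,\D_B]=W_{AB}\sharp$ rather than $0$. However, as in the proof of Proposition~\ref{preresult} and Lemma~\ref{obs1}, the whole reduction is designed to use \emph{only} the identities $[\D_A,X^B]=\delta_A^B$, $X^A\D_A V=wV$, and $X^DK_{DE}=0$ — no mutual commutation of $\D$'s is required, because we never need to reorder the three $\D$'s, only to move $X$'s through them. This is exactly the ``no commutation of $\D$'s is involved'' observation already flagged in the proofs of Proposition~\ref{splitprop} and Proposition~\ref{newprop}. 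So the curvature terms simply do not enter, and the identity is an exact algebraic consequence of~\eqref{proj}, \eqref{crucial}, and Lemma~\ref{obs1}. I would then conclude by collecting the surviving term and noting its coefficient is $\tfrac14$, and remark that total symmetry of the right-hand side in $A,B,C$ is immediate from Lemma~\ref{obs1}.
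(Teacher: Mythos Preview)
Your proposal is correct and follows essentially the same route as the paper's proof: substitute $S_{BCDE}=\D_B\D_CK_{DE}$ into \eqref{proj}, apply $\D_A$, contract with $X^DX^E$, and show that every term except the one arising from $\tfrac14 S_{(BC)DE}$ dies using only \eqref{crucial}, \eqref{DX}, and $X^DK_{DE}=0$, with Lemma~\ref{obs1} supplying the final equality $X^DX^E\D_A\D_{(B}\D_{C)}K_{DE}=X^DX^E\D_A\D_B\D_CK_{DE}$. Your emphasis that no mutual commutation of $\D$'s is required is exactly the point; the paper likewise avoids $W_{AB}\sharp$ entirely in this computation.
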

\begin{proof}
We use the formula (\ref{proj}) for $S_{BCDE}:=\D_B\D_CK_{DE}$ and apply $\D_A$ to it. Using relation~(\ref{crucial}) as well as $X^DK_{DB}=0$ and equations (\ref{DX}), a direct computation shows that each of the last eight terms in the right hand side of (\ref{proj}) vanishes when contracted with $X^D$ and $X^E$.  For example,
\begin{eqnarray*}
X^EX^D\D_A\D_D\D_CK_{BE}
&=& -\D_CK_{BA}+X^D\D_A\D_CK_{BD}-X^D\D_A\D_CK_{BE}
\\
&=&
-\D_AX^D\D_CK_{BD}-X^D\D_A\D_DK_{BC}
\\
&=&
2 \D_AK_{BC}-\D_AX^D\D_DK_{BC}
\\
&=&
0.
\end{eqnarray*}
A similar computation shows that
\[
X^EX^D\D_A\D_D\D_EK_{BC}
=
\D_AK_{BC} +[X^E,\D_A]\D_EK_{BC} =0.\]
Hence, equation (\ref{proj}) implies that 
\[
X^EX^D\D_A(P\D^2K)_{BCDE}=
\tfrac{1}{4}
X^EX^D\D_A\D_{(B}\D_C)K_{DE}=
\tfrac{1}{4}
X^EX^D\D_A\D_B\D_CK_{DE},
\]
where the second equality follows from Lemma \ref{obs1}.
\end{proof}

The following lemma will give a formula for the projection $P$, when restricted to $\cT\otimes \cT_{(2,1)}$,
i.e., applied to
 $S_{BCDC}\in \cT^*\otimes \cT_{(2,1)}$.
 
 \begin{lemma}\label{projDDKlem}
Let  $P:=P_{(2,2)} $ be the projection of $\otimes^4\cT^*$ onto $\cT_{(2,2)}$ defined above
and $S_{BCDC}\in \cT^*\otimes \cT_{(2,1)}$. Then
\begin{equation}\label{projDDK}
(PS)_{BCDE}
=\tfrac{3}{4}\left( S_{BCDE}
- S_{[BC]DE}\right)
-\tfrac{3}{8}\left( S_{[DC]BE}+ S_{[EB]CD}+S_{[DB]CE}+ S_{[EC]BD}\right).
\end{equation}
\end{lemma}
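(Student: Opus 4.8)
The plan is to derive the formula by specialising the one already established for $\otimes^2\cT^*\otimes\cT_{(2)}$, namely~\eqref{proj}, using the extra symmetry that cuts out $\cT^*\otimes\cT_{(2,1)}$ inside $\otimes^2\cT^*\otimes\cT_{(2)}$. Indeed, $S_{BCDE}$ lies in $\cT^*\otimes\cT_{(2,1)}$ precisely when it is symmetric in its last two slots (so that~\eqref{proj} applies to it verbatim) and, in addition, its symmetrisation over the last three slots vanishes, $S_{B(CDE)}=0$: by~\eqref{Ychar} this pair of conditions is exactly the $(2,1)$ Young symmetry on the triple $C,D,E$. So the first step is to translate the second condition, together with the symmetry in the $D,E$ slots, into the cyclic rewriting rule
\begin{equation*}
S_{WXYZ}+S_{WYXZ}+S_{WZXY}=0 ,
\end{equation*}
valid for all names $W,X,Y,Z$ (the $3,4$-slot symmetry being used freely to reorder the last two entries), and to record the handful of its instances that will be used.

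The core of the argument is then a finite bookkeeping computation. Starting from~\eqref{proj}, the only term whose index pattern does not already appear in~\eqref{projDDK} is $S_{(DE)BC}$; applying the cyclic rule once with $D$ and once with $E$ in the first slot gives
\begin{equation*}
S_{(DE)BC}=-\tfrac12\bigl(S_{DBCE}+S_{DCBE}+S_{EBCD}+S_{ECBD}\bigr) ,
\end{equation*}
and each monomial on the right splits into its slot-$1,2$ symmetric and antisymmetric parts, e.g.\ $S_{DBCE}=S_{(DB)CE}+S_{[DB]CE}$. Feeding this back into~\eqref{proj}, the four antisymmetric pieces assemble into $S_{[DC]BE},S_{[EB]CD},S_{[DB]CE},S_{[EC]BD}$ with coefficient $-\tfrac38$, as claimed; the leftover symmetric pieces, combined with the terms $S_{(DC)BE},\dots$ already present in~\eqref{proj}, collapse by two more applications of the cyclic rule to $\tfrac12 S_{(BC)DE}$, which together with the $\tfrac14 S_{(BC)DE}$ from~\eqref{proj} yields $\tfrac34 S_{(BC)DE}=\tfrac34\bigl(S_{BCDE}-S_{[BC]DE}\bigr)$, i.e.\ exactly~\eqref{projDDK}. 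As an independent check—or, equally, as a self-contained alternative to running the bookkeeping in full—one may verify directly that the right-hand side of~\eqref{projDDK} is symmetric in slots $1,2$ and in slots $3,4$ and has vanishing symmetrisation over any three slots (hence is $\cT_{(2,2)}$-valued, by~\eqref{Ychar}) and that it restricts to the identity on $\cT_{(2,2)}\subset\cT^*\otimes\cT_{(2,1)}$; since $\cT_{(2,2)}$ occurs in $\cT^*\otimes\cT_{(2,1)}$ with multiplicity one, these two properties determine the projection $P$ uniquely and the formula follows.

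The only real difficulty here is the combinatorics: roughly a dozen distinct index placements are in play, and the cyclic relation and the $D,E$-slot symmetry must be applied consistently, so it is easy to misplace a sign or a factor of two. The cleanest way to control this is to treat the cyclic relation strictly as a rewriting rule that normalises every monomial to one of the six patterns occurring in~\eqref{projDDK}, and only re-(anti)symmetrise at the very end; alternatively, one can bypass~\eqref{proj} altogether, compute $PS$ directly from the definition $P=S_{(12)}\circ S_{(34)}\circ S_{[1,3]}\circ S_{[2,4]}$ of~\eqref{DY}, and simplify using $S_{BCDE}=S_{BCED}$ and $S_{B(CDE)}=0$ at each stage.
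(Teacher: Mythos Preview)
Your proof is correct and follows essentially the same route as the paper: both start from the formula~\eqref{proj} for $P$ on $\otimes^2\cT^*\otimes\cT_{(2)}$ and then simplify using the extra relation $S_{B(CDE)}=0$ (your cyclic rewriting rule), splitting terms into their slot-$1,2$ symmetric and antisymmetric parts. The only difference is the order of the bookkeeping: the paper first rewrites the four terms $S_{(DC)BE}+\cdots$ to obtain an intermediate identity (their equation~\eqref{terms3456}) and then separately derives a swap formula $S_{(BC)DE}=S_{(DE)BC}+\text{(antisymmetric terms)}$ (their equation~\eqref{pairwise1}), whereas you begin by expanding $S_{(DE)BC}$ via the cyclic rule and then absorb the symmetric remainder into the four-term block. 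The algebra is the same either way. Your closing remark---checking directly that the right-hand side of~\eqref{projDDK} lands in $\cT_{(2,2)}$, is the identity there, and invoking multiplicity one of $\cT_{(2,2)}$ in $\cT^*\otimes\cT_{(2,1)}$---is a genuine alternative that the paper does not give; it is a clean way to certify the formula without tracking every coefficient, at the cost of appealing to a representation-theoretic fact (multiplicity one) rather than a purely elementary computation.
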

\begin{proof}
We use  equation (\ref{proj}) under the additional assumption that $S_{BCDC}\in \V^*\otimes \V_{(2,1)}$, i.e.,
\begin{equation}\label{killing}
 S_{B(CDE)}=0.\end{equation}
 For the  the third term on the right-hand-side in  (\ref{proj})  we compute
\[
S_{(DC)BE}
\ =\ 
 S_{CDBE}+ S_{[DC]BE}
\ =\ -  S_{CBDE}- S_{CEBD} +S_{[DC]BE},
\]
where the last equation uses equation (\ref{killing}). 
This allows to compute the sum of the last four terms in (\ref{proj}) as 
\begin{equation}\label{terms3456}
\begin{array}{rcl}
\lefteqn{S_{(DC)BE} + S_{(EB)CD} + S_{(DB)CE}+ S_{(EC)BD}=}\\
&=&
-4 S_{(CB)DE} -S_{CEDB}- S_{CDBE}  -S_{BEDC}-S_{BDEC}
\\
&&{} + S_{[DC]BE}+S_{[EB]CD}+S_{[DB]CE}+S_{[EC]BD}
\\
&=&
-2 S_{(CB)DE} + S_{[DC]BE}+S_{[EB]CD}+S_{[DB]CE}+S_{[EC]BD},
\end{array}
\end{equation}
where the last equation again follows from (\ref{killing}).

Now we look at the second term on the right-hand-side of (\ref{proj}): using (\ref{killing}) we get that
\[
\begin{array}{rcl}
S_{(DE)BC}
&=&
-\tfrac{1}{2}
\left(
 S_{DBCE}+
 S_{DCEB}+
 S_{EBCD}+
 S_{ECDB}\right)
\\[2mm]
&=&
-\tfrac{1}{2}
\left(
 S_{BDCE}+
 S_{CDEB}+
 S_{BECD}+
 S_{CEDB}\right)
\\[1mm]
&&{}-
\left(
 S_{[DB]CE}+
 S_{[DC]EB}+
 S_{[EB]CD}+
 S_{[EC]DB}\right)
\\[2mm]
&=& S_{(BC)DE}-
\big(
 S_{[DB]CE}+
 S_{[DC]EB}+
 S_{[EB]CD}+
 S_{[EC]DB}\big).
\end{array}
\]
Hence, equation (\ref{pairwise}) from the flat case generalises to 
\begin{equation}\label{pairwise1}
 S_{(BC)DE}=
 S_{(DE)BC}
+
\big(
 S_{[DB]CE}+
 S_{[DC]EB}+
 S_{[EB]CD}+
 S_{[EC]DB}\big).
\end{equation}
Then putting (\ref{terms3456}) and (\ref{pairwise1}) together, for $S_{BCDE}\in \V^*\otimes \V_{(2,1)}$, finishes the proof.
\end{proof}

Now assume that 
$\D_C$ is the Thomas $\D$-operator and $K_{DE}$ is symmetric such that 
\begin{equation}\label{tkilling}
\D_{(C}K_{DE)}=0.\end{equation}
Then set $S_{BCDE}:=\D_B \D_CK_{DE}$ in the above equations. Observe that 
\begin{eqnarray*}
 S_{[BC]DE}&=&\D_{[B}\D_{C]}K_{DE}\ =\ 
\tfrac{1}{2}\left(
\D_{B}\D_{C}K_{DE}-\D_{C}\D_{B}K_{DE}\right)
\\
&=&
\tfrac{1}{2} W_{BC}\ \sharp K_{DE}
\ =\
-W_{BC}{}^F{}_{(D}K_{E)F}.
\end{eqnarray*}
Then, from Lemma \ref{projDDKlem} we get an explicit version of the curvature terms in Proposition \ref{key2}:
\begin{prop}\label{LKprop}
Let $\D$ be the Thomas $\D$-operator for a projective structure with curvature $W_{AB}{}^C{}_D$ and let $P$ be the projection from $\cT^*\otimes \cT_{(2,1)}$ to $\cT_{(2,2)}$. 
Then $K\in \Gamma(\cT^*_{(2)})$ 
satisfies $\D_{(A}K_{BC)}=0$, i.e.,  $\D_{A}K_{BC}\in \cT_{(2,1)}$, if and only if  
\begin{equation}\label{LK}
(P\D^2K)_{BCDE}
=\tfrac{3}{4} \D_B\D_CK_{DE}
-\tfrac{3}{8}\left(
W_{BC}\sharp K_{DE}
+
W_{D(B}\sharp K_{C)E}
+
W_{E(B} \sharp K_{C)D}\right)
\end{equation}
that is
\[
\D_B\D_CK_{DE}
+\tfrac{1}{2}\left(
W_{BC}\sharp K_{DE}
+
W_{D(B}\sharp K_{C)E}
+
W_{E(B} \sharp K_{C)D}\right)\in V_{(2,2)}.
\]
\end{prop}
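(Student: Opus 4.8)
The plan is to derive the explicit formula \eqref{LK} by substituting $S_{BCDE}:=\D_B\D_C K_{DE}$ into the projection formula \eqref{projDDK} of Lemma~\ref{projDDKlem}, using the hypothesis $\D_{(C}K_{DE)}=0$ to guarantee that $S_{BCDE}\in \cT^*\otimes\cT_{(2,1)}$ so that the lemma applies. The one genuinely computational ingredient is the identification of the antisymmetrised pieces $S_{[BC]DE}=\D_{[B}\D_{C]}K_{DE}$ with curvature terms; this is exactly the displayed computation immediately preceding Proposition~\ref{LKprop}: $\D_{[B}\D_{C]}K_{DE}=\tfrac12 W_{BC}\sharp K_{DE}=-W_{BC}{}^F{}_{(D}K_{E)F}$, where the Leibniz rule \eqref{Wdef} for $W_{AB}\sharp$ on a $2$-cotractor and the symmetry $K_{DE}=K_{(DE)}$ are used.

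First I would write out the four antisymmetrised terms appearing on the right of \eqref{projDDK}, namely $S_{[DC]BE}$, $S_{[EB]CD}$, $S_{[DB]CE}$, $S_{[EC]BD}$, and convert each to a $W$-term. Being careful with which pair of indices is antisymmetrised, $S_{[DC]BE}=\D_{[D}\D_{C]}K_{BE}=-W_{DC}{}^F{}_{(B}K_{E)F}$, and similarly for the others; after collecting, the four terms combine (using the symmetry of $K$ and relabelling) into the symmetrised combination $W_{D(B}\sharp K_{C)E}+W_{E(B}\sharp K_{C)D}$ up to the overall constant. The term $S_{[BC]DE}$ feeds directly into the $S_{BCDE}-S_{[BC]DE}$ factor, contributing the $\tfrac34(\D_B\D_C K_{DE})$ piece together with a $+\tfrac34 W_{BC}\sharp K_{DE}$ correction of the right sign. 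Substituting everything back into \eqref{projDDK} and simplifying the numerical coefficients then yields \eqref{LK}.

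For the converse direction and the equivalence with $\D_{(A}K_{BC)}=0$, I would invoke Proposition~\ref{pp} together with Corollary~\ref{conseq} in the rank-two case: $\D_A K_{BC}$ lies in $\cT_{(2,1)}$ precisely when its symmetrisation over all three indices vanishes, which is $\D_{(A}K_{BC)}=0$; conversely, if the displayed membership \eqref{LK} fails to hold then $\D_{(A}K_{BC)}$ is a nonzero section of a $\cT_{(2,1)}$-complement inside $\otimes^3\cT^*$, and $P\D^2 K$ cannot equal the stated right-hand side since the latter is manifestly valued in $\cT_{(2,2)}$ while the former would not be. The last displayed rearrangement, moving $P\D^2K\in\cT_{(2,2)}$ to one side, follows by subtracting: since $(P\D^2K)_{BCDE}\in V_{(2,2)}$ always and \eqref{LK} expresses it as $\tfrac34$ times the bracket, that bracket lies in $V_{(2,2)}$.

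The main obstacle I anticipate is purely bookkeeping: keeping the index positions and the symmetrisation brackets straight when converting the four $S_{[\cdot\cdot]\cdot\cdot}$ terms into $W$-terms, since each involves a different pair of indices in the commutator and the $\sharp$-action distributes over the two free cotractor slots with a sign. A careful relabelling—exploiting $K_{DE}=K_{(DE)}$ and the symmetry $W_{AB}{}^C{}_D=-W_{BA}{}^C{}_D$—is needed to see that the four terms collapse to the two symmetrised expressions $W_{D(B}\sharp K_{C)E}$ and $W_{E(B}\sharp K_{C)D}$ with matching coefficients; there is no conceptual difficulty beyond this, as all the structural identities \eqref{Wdef}, \eqref{DX}, \eqref{[DX]}, Lemma~\ref{projDDKlem}, and Proposition~\ref{pp} are already in hand.
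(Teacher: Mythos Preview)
Your forward direction is correct and matches the paper exactly: apply Lemma~\ref{projDDKlem} to $S_{BCDE}=\D_B\D_C K_{DE}$ (which lies in $\cT^*\otimes\cT_{(2,1)}$ by hypothesis) and replace each $S_{[\,\cdot\,\cdot\,]\cdot\cdot}$ by the corresponding $\tfrac12 W_{\cdot\cdot}\sharp K_{\cdot\cdot}$ term; the four cross terms pair off into $W_{D(B}\sharp K_{C)E}+W_{E(B}\sharp K_{C)D}$ just as you describe. (One small slip: the $-\tfrac34 S_{[BC]DE}$ piece contributes $-\tfrac38 W_{BC}\sharp K_{DE}$, not $+\tfrac34$, but this is precisely the bookkeeping you flagged.)

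Your converse, however, has a genuine gap. You argue by contraposition that if \eqref{LK} fails then $\D_{(A}K_{BC)}\neq 0$, but that is logically the forward direction again, not the converse. Worse, the supporting claim is backwards: $P\D^2K$ is \emph{always} in $\cT_{(2,2)}$ by construction of the Young projector, whereas the right-hand side of \eqref{LK} is \emph{not} manifestly in $\cT_{(2,2)}$ --- it contains the raw term $\D_B\D_C K_{DE}$. So nothing in your paragraph actually shows that \eqref{LK} forces $\D_{(A}K_{BC)}=0$. Invoking Proposition~\ref{pp} or Corollary~\ref{conseq} does not help here, since neither gives a way to read the Killing condition off from the equality \eqref{LK}.

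The paper closes this gap by a short direct computation: contract \eqref{LK} with $X^B$. On the right, every curvature term dies because $X^B W_{B\cdots}=0$ (see \eqref{XW0}) and $X^B K_{BC}=0$, while $X^B\D_B\D_C K_{DE}=\D_C K_{DE}$ since $\D_C K$ has weight~$1$. Hence $X^B(P\D^2K)_{BCDE}=\tfrac34\,\D_C K_{DE}$; but the left-hand side is a contraction of a section of $\cT_{(2,2)}$, so symmetrising over $C,D,E$ annihilates it, giving $\D_{(C}K_{DE)}=0$. This $X$-contraction step is the missing idea in your proposal.
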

\begin{proof}
One direction immediately follows from Lemma \ref{projDDKlem} applied to $S_{BCDE}:=\D_B\D_CK_{ED}$. 

For the other direction assume that equation (\ref{LK}) holds. Contracting with $X^B$ and noting that $X^BW_{B\cdots}=0$ as well as $X^BK_{BC}=0$ implies that
\begin{equation}
\label{DKsplit}
X^B(P\D^2K)_{BCDE}
=
\tfrac{3}{4} 
X^B
\D_B\D_CK_{DE}
=
\tfrac{3}{4} 
\D_CK_{DE}
\end{equation}
from the definition of $\D_B$. Hence, since $P\D^2K\in \Gamma(\cT_{(2,2)})$, 
the symmetrisation over $CDE$ vanishes.
\end{proof}

%
Note that, from equation (\ref{LK}) we obtain that
\begin{equation}
\label{Ksplit}
X^BX^C(P\D^2K)_{BCDE}
=\tfrac{3}{4} 
X^BX^C
\D_B\D_CK_{DE}
=
\tfrac{3}{4} 
X^C
\D_CK_{DE}
=
\tfrac{3}{2} K_{DE},\end{equation}
because of (\ref{DX}) and (\ref{XW0}).

%
%
%
%
Next we determine the connection for which $(P\D^2K)_{BCDE}$ is going to be parallel, i.e., we determine explicitly the curvature terms in Proposition~\ref{preresult}, Theorem~\ref{preresultthma} and Corollary \ref{maincor}.  To get a formula for its covariant derivative with respect to the projective tractor connection, we apply $\D$ to the equality in   Proposition \ref{LKprop}  to get
%
\begin{equation}\label{dlk}
4\D_C(P\D^2K)_{DEAB}
=3 \D_C\D_D\D_EK_{AB}
-\tfrac{3}{2}\D_C\left(
W_{DE}\sharp K_{AB}
+
W_{A(D}\sharp K_{E)B}
+ 
W_{B(D}\sharp K_{E)A}\right).
\end{equation}
We are now going to obtain a formula for $T_{CDEAB}= \D_C \D_D\D_E K_{AB}\in \otimes^5\V^*$. This is achieved by the following lemmas.

\begin{lemma}\label{observe}
For every $T\in \otimes^5\cT^*$ it holds
\begin{eqnarray*}
\lefteqn{
T_{C(DE)AB}+T_{D(EC)AB}+T_{E(CD)AB}
 }\\
&=&3T_{CDEAB}
+3T_{C[ED]AB}
+T_{D[EC]AB}
+T_{E[DC]AB}
+2T_{[EC]DAB}
+2T_{[DC]EAB}.
\end{eqnarray*}
\end{lemma}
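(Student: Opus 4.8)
The statement to prove is Lemma~\ref{observe}: an identity in $\otimes^5\cT^*$ involving symmetrizations and antisymmetrizations over the first three indices of a $5$-tensor $T_{CDEAB}$. This is purely formal — it holds for any $5$-tensor, with no use of tractor structure, $\D$-operators, or curvature — so the plan is to verify it by direct expansion using the convention that $X_{(ij)} = \tfrac12(X_{ij}+X_{ji})$ and $X_{[ij]}=\tfrac12(X_{ij}-X_{ji})$, treating only the first three slots as active and suppressing the inert $A,B$ slots.

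The plan is as follows. First I would write out the left-hand side: $T_{C(DE)AB}+T_{D(EC)AB}+T_{E(CD)AB} = \tfrac12\bigl(T_{CDE}+T_{CED}+T_{DEC}+T_{DCE}+T_{ECD}+T_{EDC}\bigr)$ (suppressing $AB$), which is $\tfrac12$ times the sum over all six permutations of the first three indices; equivalently it equals $3\,T_{(CDE)}$ — no, more precisely it is $\tfrac12\sum_{\sigma\in S_3}T_{\sigma(C)\sigma(D)\sigma(E)}$. Then I would expand every term on the right-hand side into the $T_{\sigma}$ basis: $3T_{CDE}$; $3T_{C[ED]}=\tfrac32(T_{CED}-T_{CDE})$; $T_{D[EC]}=\tfrac12(T_{DEC}-T_{DCE})$; $T_{E[DC]}=\tfrac12(T_{EDC}-T_{ECD})$; $2T_{[EC]D}=T_{ECD}-T_{CED}$; $2T_{[DC]E}=T_{DCE}-T_{CDE}$. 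Summing these and collecting the coefficient of each of the six permutations, I would check that the coefficient of $T_{CDE}$ is $3-\tfrac32-1 = \tfrac12$, the coefficient of $T_{CED}$ is $\tfrac32-1=\tfrac12$, the coefficient of $T_{DEC}$ is $\tfrac12$, of $T_{DCE}$ is $-\tfrac12+1=\tfrac12$, of $T_{EDC}$ is $\tfrac12$, and of $T_{ECD}$ is $-\tfrac12+1=\tfrac12$. All six coefficients equal $\tfrac12$, which matches the left-hand side, completing the proof.

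Since this is a bookkeeping identity, there is essentially no conceptual obstacle; the only thing to be careful about is the sign and normalization conventions for $[\cdot]$ and $(\cdot)$ (the paper uses the normalized idempotent conventions, so $\tfrac12$ factors appear for pairs) and keeping the passive indices $A,B$ out of the symmetrizations — the notation $T_{C(DE)AB}$ symmetrizes only $D$ and $E$, and $T_{C[ED]AB}$ antisymmetrizes only $E$ and $D$, etc. A clean way to present it is simply to organize the six-term expansion in a table or an aligned display and read off that each permutation occurs with coefficient $\tfrac12$ on both sides. I would write:

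\begin{proof}
This is a purely algebraic identity valid for an arbitrary $T\in\otimes^5\cT^*$, and it involves only the first three tensor slots; we suppress the inert indices $A,B$ throughout. Using $T_{(ij)k}=\tfrac12(T_{ijk}+T_{jik})$ and $T_{[ij]k}=\tfrac12(T_{ijk}-T_{jik})$ (and similarly for the second and third slots), the left-hand side expands as
\[
T_{C(DE)}+T_{D(EC)}+T_{E(CD)}=\tfrac12\!\!\sum_{\sigma\in S_3}T_{\sigma(C)\sigma(D)\sigma(E)},
\]
i.e.\ $\tfrac12$ times the sum over all six orderings of $C,D,E$. For the right-hand side we expand each summand in the same basis:
\[
3T_{CDE},\quad 3T_{C[ED]}=\tfrac32(T_{CED}-T_{CDE}),\quad T_{D[EC]}=\tfrac12(T_{DEC}-T_{DCE}),
\]
\[
T_{E[DC]}=\tfrac12(T_{EDC}-T_{ECD}),\quad 2T_{[EC]D}=T_{ECD}-T_{CED},\quad 2T_{[DC]E}=T_{DCE}-T_{CDE}.
\]
Adding these and collecting coefficients: the coefficient of $T_{CDE}$ is $3-\tfrac32-1=\tfrac12$; of $T_{CED}$ it is $\tfrac32-1=\tfrac12$; of $T_{DEC}$ it is $\tfrac12$; of $T_{DCE}$ it is $-\tfrac12+1=\tfrac12$; of $T_{EDC}$ it is $\tfrac12$; and of $T_{ECD}$ it is $-\tfrac12+1=\tfrac12$. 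Hence the right-hand side equals $\tfrac12\sum_{\sigma\in S_3}T_{\sigma(C)\sigma(D)\sigma(E)}$, which is the left-hand side.
\end{proof}
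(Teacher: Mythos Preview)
Your proof is correct and is exactly the direct verification the paper has in mind; the paper's own proof consists of the single sentence ``The proof is by inspection.'' Your explicit coefficient bookkeeping simply spells out that inspection.
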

\begin{proof}
The poof  is by inspection.\end{proof}
\begin{lemma}\label{D3Klemma}
Let  $T_{ABCDE}\in\otimes^2 \cT^*\otimes \cT_{(2,1)}$, i.e., $T_{AB(CDE)}=0$.
Then 
\begin{eqnarray*}
-3T_{CDEAB}
&=&
2T_{[EC]DAB}
+2T_{[DC]EAB}
+2T_{[AC]BDE}
+2T_{[AD]BEC}
+2T_{[AE]BCD}
\\&&
+2T_{A[BC]DE}
+2T_{A[BD]EC}
+2T_{A[BE]CD}
\\
&&
+3T_{C[ED]AB}
+
 T_{C[DA]BE}+
 T_{C[DB]EA}+
 T_{C[EA]BD}+
 T_{C[EB]DA}
+T_{C[AB]DE}
\\&&
+T_{D[EC]AB}
+
 T_{D[EA]BC}+
 T_{D[EB]CA}+
 T_{D[CA]BE}+
 T_{D[CB]EA}
+T_{D[AB]EC}
\\&&
+T_{E[DC]AB}
+
 T_{E[CA]BD}+
 T_{E[CB]DA}+
 T_{E[DA]BC}+
 T_{E[DB]CA}
+T_{E[AB]CD}.
\end{eqnarray*}
\end{lemma}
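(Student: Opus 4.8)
\textbf{Proof plan for Lemma~\ref{D3Klemma}.}
The plan is to reduce everything to the identity of Lemma~\ref{observe} plus the single constraint $T_{AB(CDE)}=0$ coming from the hypothesis $T\in\otimes^2\cT^*\otimes\cT_{(2,1)}$. The point is that for a tensor with the Young symmetry $\cT_{(2,1)}$ in its last three slots, the total symmetrisation over those three slots vanishes, and this can be written (after expanding the symmetrisation over three indices into the six permutations, and grouping into symmetric pairs) as a relation of exactly the shape appearing on the left of Lemma~\ref{observe}. So the strategy is: write down Lemma~\ref{observe} not just for the slot-pattern $(C,D,E,A,B)$ but for \emph{each} of the three cyclic reorderings of the three ``free'' slots that carry the Young symmetry---namely I would apply it once with the last-three-slots triple being $(C,D,E)$, once with $(A,D,E)$ in the appropriate positions, and once with $(A,C,\dots)$ etc.---and in each case kill the left-hand side using $T_{AB(CDE)}=0$. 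Each application then expresses a symmetric-part-free combination as a sum of ``commutator'' terms $T_{\cdots[\cdot\cdot]\cdots}$, i.e.\ terms antisymmetrised in two adjacent slots.

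Concretely, first I would apply Lemma~\ref{observe} with the roles of $(C,D,E)$ unchanged; since $T_{AB(CDE)}=0$ forces the three symmetrised terms on the left to sum to zero, this gives
\[
0 = 3T_{CDEAB}+3T_{C[ED]AB}+T_{D[EC]AB}+T_{E[DC]AB}+2T_{[EC]DAB}+2T_{[DC]EAB}.
\]
This already produces the first line and the three ``$T_{C[ED]AB},T_{D[EC]AB},T_{E[DC]AB}$'' terms of the claimed formula, with the right coefficients, but it is \emph{not} symmetric enough: the indices $A,B$ play a special role. To symmetrise in the way the statement demands, I would next use the Young constraint again, now rewriting $T_{CDEAB}$ itself. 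Since $T_{AB(CDE)}=0$ is a statement about slots $3,4,5$ (reading $T_{ABCDE}$), the tensor $T_{CDEAB}$---obtained by moving $A,B$ to the end---does \emph{not} automatically satisfy the analogous identity in its own last three slots; but one can still expand $T_{CDEAB}$ via the permutation identity in slots $3,4,5$ of the \emph{original} indexing. The cleanest route is: in the equation above, replace each of $T_{C[ED]AB}$, $T_{D[EC]AB}$, $T_{E[DC]AB}$, $T_{[EC]DAB}$, $T_{[DC]EAB}$ by re-expanding the antisymmetrised slot using that the \emph{symmetrised} version of that same slot-pair, when combined with the two $A,B$ slots into a symmetrisation over three, must vanish by the hypothesis---this trades a term like $T_{A[BC]DE}$-type antisymmetrisations for the original ones, and bookkeeping the $1/2$'s gives precisely the blocks on lines 2--5 of the claim.

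The main obstacle I anticipate is purely combinatorial: keeping track of signs and the factor-$\tfrac12$'s when converting between ``antisymmetrise slots $i,j$'' and ``the symmetrisation over slots $i,j,k$ vanishes,'' especially because the hypothesis privileges slots $3,4,5$ but the conclusion is written with the pair $A,B$ in slots $4,5$ and the triple $C,D,E$ in slots $1,2,3$. Every term on the right of the claim is of the form $T$ with a single adjacent antisymmetrisation $[\,\cdot\,\cdot\,]$, so no $3$-index objects survive; the proof is therefore a finite linear-algebra identity in the $5!=120$-dimensional space of index orderings, and the only real work is to verify that the stated right-hand side, after expanding every $[\,\cdot\,\cdot\,]$ into its two terms and imposing $T_{AB(CDE)}=0$, collapses to $-3T_{CDEAB}$. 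I would organise this by symmetry: the right-hand side is manifestly built from the ``$(C,D,E)$-block'' (lines with leading index $C$, $D$, $E$ coming in cyclic order) plus a ``$A$-leading block'' plus a ``$B$-inside block,'' and I would check the coefficient of each orbit of index-words under the residual symmetry (swap $D\leftrightarrow E$, which the construction respects up to the sign conventions) rather than all $120$ words individually. As the authors note for Lemma~\ref{observe}, ``the proof is by inspection,'' and the same is true here once the reduction above is set up---so I would present the reduction steps and then simply assert the resulting identity, flagging it as a direct (if tedious) verification.
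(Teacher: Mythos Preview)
Your first displayed equation is not justified, and this is the key gap. The left-hand side of Lemma~\ref{observe} is
\[
T_{C(DE)AB}+T_{D(EC)AB}+T_{E(CD)AB}=3\,T_{(CDE)AB},
\]
i.e.\ the symmetrisation over slots $1,2,3$ with $A,B$ sitting in slots $4,5$. The hypothesis $T\in\otimes^2\cT^*\otimes\cT_{(2,1)}$ gives $T_{AB(CDE)}=0$, which is the symmetrisation over slots $3,4,5$. These are different conditions, and the second does not imply the first (take for instance $T_{ABCDE}=u_Au_BS_{CDE}$ with $S\in\cT_{(2,1)}$). So you cannot set the left of Lemma~\ref{observe} to zero and read off your displayed identity.

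What the paper actually does is bridge this mismatch with the swap identity~(\ref{pairwise1}), which was established earlier for $S\in\cT^*\otimes\cT_{(2,1)}$. For fixed first index, that identity lets one write $T_{C(AB)DE}=T_{C(DE)AB}+\text{(commutator terms)}$; combining this with the elementary decomposition $T_{ABCDE}=T_{C(AB)DE}+T_{C[AB]DE}+2T_{[AC]BDE}+2T_{A[BC]DE}$ expresses $T_{ABCDE}$ as $T_{C(DE)AB}$ plus antisymmetrised terms. Summing the three cyclic versions and using $3T_{AB(CDE)}=0$ then gives $T_{C(DE)AB}+T_{D(EC)AB}+T_{E(CD)AB}$ equal to a specific sum of commutator terms---not zero---and only \emph{then} is Lemma~\ref{observe} invoked to isolate $3T_{CDEAB}$. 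The missing ingredient in your plan is precisely this use of~(\ref{pairwise1}); without it there is no way to pass from the hypothesis on slots $3,4,5$ to a statement about the symmetrised first three slots. Your brute-force fallback (expand everything and verify the linear identity in the $120$-dimensional space) would of course succeed, but the reduction you outline before it is not correct as written.
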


\begin{proof}
First we  can swap the pair  $AB$ with $DE$ by using (\ref{pairwise1}) for the second equality in
\begin{eqnarray*}
T_{ABCDE}
&=&
T_{C(AB)DE}+T_{C[AB]DE}+2T_{[AC]BDE}+2T_{A[BC]DE}
\\
&=&
T_{C(DE)AB}
+
 T_{C[DA]BE}+
 T_{C[DB]EA}+
 T_{C[EA]BD}+
 T_{C[EB]DA}
 \\
 &&
+T_{C[AB]DE}+2T_{[AC]BDE}+2T_{A[BC]DE}.
\end{eqnarray*}
In an analogous computation as in  the flat case, this can be used to evaluate
\begin{align*}
0=&3T_{AB(CDE)}
\\
=&
T_{C(DE)AB}+T_{D(EC)AB}+T_{E(CD)AB}
\\&
+
 T_{C[DA]BE}+
 T_{C[DB]EA}+
 T_{C[EA]BD}+
 T_{C[EB]DA}
+T_{C[AB]DE}+2T_{[AC]BDE}+2T_{A[BC]DE}
\\&
+
 T_{D[EA]BC}+
 T_{D[EB]CA}+
 T_{D[CA]BE}+
 T_{D[CB]EA}
+T_{D[AB]EC}+2T_{[AD]BEC}+2T_{A[BD]EC}
\\&
+
 T_{E[CA]BD}+
 T_{E[CB]DA}+
 T_{E[DA]BC}+
 T_{E[DB]CA}
+T_{E[AB]CD}+2T_{[AE]BCD}+2T_{A[BE]CD}
\end{align*}
Now we apply Lemma \ref{observe} to the terms $T_{C(DE)AB}+T_{D(EC)AB}+T_{E(CD)AB}$ in this equation to get
 to get
\begin{align*}
0
=&
3T_{CDEAB}
+3T_{C[ED]AB}
+T_{D[EC]AB}
+T_{E[DC]AB}
+2T_{[EC]DAB}
+2T_{[DC]EAB}
\\&
+
 T_{C[DA]BE}+
 T_{C[DB]EA}+
 T_{C[EA]BD}+
 T_{C[EB]DA}
+T_{C[AB]DE}+2T_{[AC]BDE}+2T_{A[BC]DE}
\\&
+
 T_{D[EA]BC}+
 T_{D[EB]CA}+
 T_{D[CA]BE}+
 T_{D[CB]EA}
+T_{D[AB]EC}+2T_{[AD]BEC}+2T_{A[BD]EC}
\\&
+
 T_{E[CA]BD}+
 T_{E[CB]DA}+
 T_{E[DA]BC}+
 T_{E[DB]CA}
+T_{E[AB]CD}+2T_{[AE]BCD}+2T_{A[BE]CD},
\end{align*}
which implies the formula in the lemma.
\end{proof}
By applying this lemma to 
$T_{CDEAB}= \D_C \D_D\D_E K_{AB}\in\Gamma( \otimes^2\cT^*\otimes \cT_{(2,1)})$ for $K_{AB}\in \Gamma(\cT_{(2)})$
and by replacing  skew-symmetrisations by curvature, for example,
\[
T_{[EC]DAB}
=
\tfrac{1}{2}\left(\D_E\D_C\D_DK_{AB}-\D_C\D_E\D_DK_{AB}\right)
=
\tfrac{1}{2}W_{EC}\sharp 
\D_DK_{AB}
\]
and 
\[
T_{A[BC]DE}
=\tfrac{1}{2}\left(
\D_A\D_B\D_CK_{DE}-
\D_A\D_C\D_BK_{DE}\right)
=
\tfrac{1}{2}
\D_A (W_{BC}\sharp K_{DE}),\]
we obtain the following result. Here and henceforth we use the following convention:
the notation $|B|$ or $| A\cdots B|$ means that the index $B$, or the indices $A\cdots B$, are excluded from any surrounding symmetrisation.

\begin{prop}\label{DLKprop}
Let $\D$ be the Thomas $\D$-operator for a projective structure with curvature $W_{AB}{}^C{}_D$ and let $P$ be the  map from $\cT^*\otimes \cT_{(2,1)}$ to $\cT_{(2,2)}$ defined in (\ref{DY}). 
Then $K\in \cT^*_{(2)}$ satisfies $\D_{(A}K_{BC)}=0$, i.e.,  $\D_{A}K_{BC}\in \cT_{(2,1)}$, if and only if, 
\begin{equation}\label{DLK}
\begin{array}{rcl}
 \D_C(P\D^2K)_{DEAB}
&=&
\tfrac{1}{2}W_{C(D}\sharp \D_{E)}K_{AB}
-\tfrac{3}{4} W_{A(C}\sharp \D_{|B|}K_{DE)}
-\tfrac{3}{4} \D_A (W_{B(C}\sharp K_{DE)})
\\
&&
-\tfrac{1}{8}
\D_{C}\left(W_{AB}\sharp K_{DE}
- W_{E(A}\sharp K_{B)D}-W_{D(A}\sharp K_{B)E}\right)
\\&&
-\tfrac{1}{8}\D_D
\left(
W_{AB}\sharp K_{EC}
+
W_{EC}\sharp K_{AB}
+ 
2 W_{E(A}\sharp K_{B)C}+
2
 W_{C(A}\sharp K_{B)E}\right)
\\&&
-\tfrac{1}{8}\D_E
\left(
W_{AB}\sharp K_{DC}
+W_{DC}\sharp K_{AB}
+
2 W_{C(A}\sharp K_{B)D}+
2 W_{D(A}\sharp K_{B)C}\right).
\end{array}
\end{equation}
\end{prop}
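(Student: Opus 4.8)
The plan is to establish Proposition~\ref{DLKprop} by combining the three algebraic ingredients already assembled: Lemma~\ref{D3Klemma} (which expresses $-3T_{CDEAB}$ as a sum of skew-symmetrised terms when $T\in\otimes^2\cT^*\otimes\cT_{(2,1)}$), the conversion of each such skew-symmetrisation into a $W$-curvature term via $[\D,\D]=W\sharp$, and equation~\eqref{dlk}, which already rewrites $4\D_C(P\D^2K)$ in terms of $\D_C\D_D\D_E K$ and $\D_C$ applied to the curvature corrections coming from Proposition~\ref{LKprop}. Throughout we use that $\D_{(A}K_{BC)}=0$, equivalently $\D_A K_{BC}\in\cT_{(2,1)}$, which is exactly the hypothesis that makes $T_{CDEAB}=\D_C\D_D\D_E K_{AB}$ lie in $\otimes^2\cT^*\otimes\cT_{(2,1)}$; the converse direction follows as in Proposition~\ref{LKprop} by contracting \eqref{DLK} with $X^B$ and with $X^BX^C$ (using $X^BW_{B\cdots}=0$, $X^BK_{BC}=0$, and \eqref{DX}) to recover $\D_{(C}K_{DE)}=0$ from the fact that $P\D^2K$ has $(2,2)$ symmetry.

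First I would substitute $T_{CDEAB}=\D_C\D_D\D_E K_{AB}$ into the identity of Lemma~\ref{D3Klemma}. This is legitimate precisely because the Killing hypothesis gives $\D_E K_{AB}\in\cT_{(2,1)}$, hence $T_{CD(EAB)}=0$, so $T$ satisfies the hypothesis $T_{AB(CDE)}=0$ needed there (after the relabelling of which block of three indices is symmetrised — one checks the symmetry type is preserved). Then each of the roughly two dozen skew-symmetrised terms on the right-hand side is one of two shapes: either the skewing is over two ``outer'' $\D$-slots with an undifferentiated $K$ sitting further in, which by $[\D_X,\D_Y]=W_{XY}\sharp$ becomes $\tfrac12 W\sharp(\D K)$ as in the displayed model computation $T_{[EC]DAB}=\tfrac12 W_{EC}\sharp\D_D K_{AB}$; or the skewing is over the innermost two $\D$-slots adjacent to $K$, giving $\tfrac12\D(W\sharp K)$ as in the model $T_{A[BC]DE}=\tfrac12\D_A(W_{BC}\sharp K_{DE})$. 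Carrying this out converts Lemma~\ref{D3Klemma} into an explicit formula for $-3\,\D_C\D_D\D_E K_{AB}$ as a sum of $W\sharp\D K$ terms and $\D(W\sharp K)$ terms.

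Next I would feed this formula for $\D_C\D_D\D_E K_{AB}$ back into \eqref{dlk}. The term $3\,\D_C\D_D\D_E K_{AB}$ there is replaced by the curvature sum just obtained, and the remaining term in \eqref{dlk}, namely $-\tfrac32\,\D_C(W_{DE}\sharp K_{AB}+W_{A(D}\sharp K_{E)B}+W_{B(D}\sharp K_{E)A})$, is already of $\D(W\sharp K)$ type. The bulk of the work is then purely bookkeeping: collect like terms, use the symmetries of $W_{AB}{}^C{}_D$ (namely $W_{AB}{}^C{}_D=-W_{BA}{}^C{}_D$, the Bianchi identity \eqref{B12}, and $X^AW_{AB\cdots}=0$ where it helps), use that $K_{AB}=K_{(AB)}$, and re-symmetrise over the appropriate index blocks to match the right-hand side of \eqref{DLK}. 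Some terms of the form $\D(W\sharp K)$ with $W$ and $K$ sharing a pattern will need to be reorganised into the three families displayed in \eqref{DLK} (the $\D_C(\cdots)$, $\D_D(\cdots)$ and $\D_E(\cdots)$ blocks, plus the two ``outer'' terms $\tfrac12 W_{C(D}\sharp\D_{E)}K_{AB}$ and the $-\tfrac34$ terms), and verifying the precise rational coefficients ($\tfrac12$, $-\tfrac34$, $-\tfrac18$) is where errors are most likely to creep in.

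The main obstacle I anticipate is exactly this last reorganisation and coefficient-matching step: Lemma~\ref{D3Klemma} produces a long unsymmetrised expression, and \eqref{DLK} is written in a highly symmetrised form using the $|\cdot|$ exclusion convention, so one must (i) correctly track which indices are being symmetrised in each block of \eqref{dlk} and \eqref{DLK}, (ii) be careful that $W\sharp$ acting on $K_{DE}$ versus on $\D_E K_{AB}$ distributes over the correct index set, and (iii) confirm that the $\D(W\sharp K)$ terms arising from the $T_{[\cdot\cdot]}$ with the skewing on the innermost slot can indeed be grouped to reconstruct the $\D_C(\cdots)$, $\D_D(\cdots)$, $\D_E(\cdots)$ blocks of \eqref{DLK} with no leftover terms. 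I would organise the computation by first fixing, once and for all, the normalisation of $W\sharp$ on a symmetric pair (as recorded just above Proposition~\ref{LKprop}, $W_{BC}\sharp K_{DE}=-W_{BC}{}^F{}_{(D}K_{E)F}$), then processing the terms of Lemma~\ref{D3Klemma} in the three groups according to where the skew sits, and only at the very end imposing the outer symmetrisations; a consistency check is available at each stage by contracting with $X^B$, since \eqref{DKsplit} predicts $X^B(P\D^2K)_{BCDE}=\tfrac34\,\D_C K_{DE}$ and hence fixes $X^B$ times the left side of \eqref{DLK}, which must be matched by $X^B$ times the right side after all the $W$ terms die.
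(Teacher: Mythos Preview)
Your proposal is essentially the paper's own proof: for the forward direction you correctly invoke Lemma~\ref{D3Klemma} on $T_{CDEAB}=\D_C\D_D\D_E K_{AB}$, convert each skew pair via $[\D,\D]=W\sharp$, and substitute into \eqref{dlk}; this is exactly what the paper does, and the coefficient bookkeeping you flag is indeed the only labour. One small correction on the converse: the contraction should be with $X^A X^B$ (the last two indices of $(P\D^2K)_{DEAB}$), not $X^B X^C$, so that after the right-hand side dies you can feed directly into Lemmas~\ref{obs1} and~\ref{obs2} to extract $\D_{(C}K_{DE)}=0$.
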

\begin{proof}
First assume that 
 equation (\ref{DLK}) holds. We contract this equation with $X^A$ and $X^B$. It is a direct computation to see the then the right hand side is zero: to see this,  recall that  $X^AW_{A\cdots}=0$ and  $X^AK_{AC}=0$ and that equation (\ref{crucial}) applied to $V_{C\cdots}$ with 
 $X^CV_{C\cdots}=0$ gives
\begin{equation}\label{crucial1}
X^C\D_A V_{CB\cdots} =\left[X^C,\D_A\right]V_{CB\cdots}+ \D_A(X^CV_{CB\cdots}) =- V_{AB\cdots}.\end{equation}
Then, from the obtained $X^AX^B\D_C(P\D^2K)_{DEAB}=0$ and from Lemmas \ref{obs1} and \ref{obs2}
we obtain the required symmetry of $\D_CK_{ED}$.

For the other direction we apply Lemma \ref{D3Klemma} to
 $T_{CDEAB}= \D_C \D_D\D_E K_{AB}\in \otimes^2\cT^*\otimes \cT_{(2,1)}$. 
Equation in Lemma \ref{D3Klemma}  then becomes
\begin{eqnarray*}
-3\D_C\D_D\D_EK_{AB}
&=&
-2 W_{C(E}\sharp 
\D_{D)}K_{AB}
+ 3 W_{A(C}\sharp \D_{|B|}K_{DE)}
+3\D_A (W_{B(C}\sharp K_{DE)})
\\
&&
+
\tfrac{1}{2}\D_{C}\left(3 W_{ED}\sharp K_{AB}+W_{AB}\sharp K_{DE}
- 2 W_{A(E}\sharp K_{D)B}-2W_{B(D}\sharp K_{E)A}\right)
\\
&&
+\tfrac{1}{2}\D_D
\left(
W_{AB}\sharp K_{EC}
+
W_{EC}\sharp K_{AB}
+ 
2 W_{E(A}\sharp K_{B)C}+
2
 W_{C(A}\sharp K_{B)E}\right)
\\&&
+\tfrac{1}{2}\D_E
\left(
W_{AB}\sharp K_{DC}
+W_{DC}\sharp K_{AB}
+
2 W_{C(A}\sharp K_{B)D}+
2 W_{D(A}\sharp K_{B)C}\right).
\end{eqnarray*}
Now we plug this in for the term $\D_C\D_D\D_EK_{AB}$ in  (\ref{DLK}) that was obtained by differentiating the equality in \ref{LKprop}:
\begin{eqnarray*}
4\D_C(P\D^2K)_{BCDE}&
=&
3 \D_C\D_D\D_EK_{AB}
-\tfrac{3}{2}\D_C\left(
W_{DE}\sharp K_{AB}
+
W_{A(D}\sharp K_{E)B}
+ 
W_{B(D}\sharp K_{E)A}\right)
\\
&=&
2 W_{C(E}\sharp 
\D_{D)}K_{AB}
- 3 W_{A(C}\sharp \D_{|B|}K_{DE)}
-3\D_A (W_{B(C}\sharp K_{DE)})
\\
&&
-
\tfrac{1}{2}\D_{C}\left(W_{AB}\sharp K_{DE}
+ W_{A(E}\sharp K_{D)B}+ W_{B(D}\sharp K_{E)A}\right)
\\
&&
-\tfrac{1}{2}\D_D
\left(
W_{AB}\sharp K_{EC}
+
W_{EC}\sharp K_{AB}
+ 
2 W_{E(A}\sharp K_{B)C}+
2
 W_{C(A}\sharp K_{B)E}\right)
\\&&
-\tfrac{1}{2}\D_E
\left(
W_{AB}\sharp K_{DC}
+W_{DC}\sharp K_{AB}
+
2 W_{C(A}\sharp K_{B)D}+
2 W_{D(A}\sharp K_{B)C}\right).
\end{eqnarray*}
This finishes the proof.
\end{proof}

\newcommand{\W}[3]{W_{#1}{}^{#2}{}_{#3}}
\newcommand{\DK}[2]{\D_{#1}K_{#2}}
Now are going to expand the terms in (\ref{DLK}) using the Leibniz rule
\begin{equation}\label{DKleibniz}
\D_A (W_{BC}\sharp K_{DE})
=
(\D_A W_{BC})\sharp K_{DE} +W_{BC}\sharp (\D_A K_{DE)})+ \W{BC}{H}{A} \DK{H}{DE},
\end{equation}
and then substituting $K_{DE}$ and $\D_AK_{DE}$ terms by contractions of $X^F$ with $L_{FADE}=(P\D^2K)_{FADE}$ using relations (\ref{Ksplit}) and (\ref{DKsplit}):
\[
K_{DE}=\tfrac{2}{3}X^FX^GL_{FGDE},\qquad \D_AK_{DE}=\tfrac{4}{3}X^FL_{FADE}.
\]
To this end, first one checks that  $X^FW_{FBCD}=0$ and $\D_AX^F=\delta_A{}^F$ imply that
\[
W_{BC}\sharp(X^FQ_{F\cdots})=
X^F W_{BC}\sharp Q_{F\cdots} ,\]
and \[
\D_A W_{BC}\sharp(X^FQ_{F\cdots})=
X^F \D_AW_{BC}\sharp Q_{F\cdots} -\W{BC}{H}{A}Q_{H\cdots},
\]
for any tensor $Q_{F\cdots}$. For $Q=L$ and $Q=X^FL_{F\cdots}$ this implies
\[
W_{BC}\sharp (\D_A K_{DE)})
=
\tfrac{4}{3}W_{BC}\sharp (X^FL_{FADE})
=
\tfrac{4}{3}
X^F W_{BC}\sharp L_{FADE}\]
and 
\begin{eqnarray*}
(\D_A W_{BC})\sharp K_{DE}
&=&
\tfrac{2}{3}
(\D_A W_{BC})\sharp (X^FX^GL_{FGDE})
\\
&=&
\tfrac{2}{3}
X^FX^G\D_A W_{BC}\sharp L_{FGDE}
-
\tfrac{4}{3}
X^F\W{BC}{H}{A}L_{FHDE}
.\end{eqnarray*}
Substituting this into equation (\ref{DKleibniz}), the terms $\W{BC}{H}{A} \DK{H}{DE}$ are  cancelled 
and we get
\begin{equation}\label{DLleibniz}
\D_A (W_{BC}\sharp K_{DE})
=
\tfrac{2}{3}
X^FX^G\D_A W_{BC}\sharp L_{FGDE}
+ 
\tfrac{4}{3}
X^F W_{BC}\sharp L_{FADE}.
\end{equation}
Then we compute step by step the terms in the right-hand-side of (\ref{DLK}):
\begin{eqnarray*}
\lefteqn{
W_{C(D}\sharp \D_{E)}K_{AB}
-\tfrac{1}{4}\left(
\D_D
\left(
W_{EC}\sharp K_{AB}
\right)
+\D_E
\left(
W_{DC}\sharp K_{AB}
\right)\right)=}\\
&=&
2 X^F W_{C(D}\sharp L_{E)FAB}
-\tfrac{1}{3}X^FX^G \D_{(D}W_{E)C}\sharp L_{FGAB}.
\end{eqnarray*}
Next we consider the terms that are not evidently symmetric in $A$ and $B$: using $L_{A(CDE)}=0$ as well as the second Bianchi identity for the Weyl tensor we compute
\begin{eqnarray*}
\lefteqn{
-\tfrac{3}{4}\left( \D_A (W_{B(C}\sharp K_{DE)}) + W_{A(C}\sharp \D_{|B|}K_{DE)}\right)  }\\
\lefteqn{\qquad
-\tfrac{1}{8}\left(
\D_{C}\left(W_{AB}\sharp K_{DE}\right)
+
\D_{D}\left(W_{AB}\sharp K_{EC}\right)
+
\D_{E}\left(W_{AB}\sharp K_{CD}\right)
\right)=}
\\
\qquad&=&
-2 X^F W_{(B|(C}\sharp L_{DE)|A)F}-\tfrac{1}{2} X^FX^G\D_{(A}W_{B)(C}\sharp L_{DE)FG}\hspace{6cm}
\\
&=&
\tfrac{2}{3}
X^F\left( W_{C(A}\sharp L_{B)FED}
-
2
W_{(A|(D}\sharp L_{E)C|B)F}
\right)
\\
&&{}
- \tfrac{1}{6}
X^FX^G \left(
\D_{(A}W_{B)C}\sharp L_{DEFG}
+2
\D_{(A}W_{B)(D}\sharp L_{E)CFG}\right)
,
\end{eqnarray*}
and \begin{eqnarray*}
\lefteqn{
\tfrac{1}{2}
\D_{C}\left( W_{E(A}\sharp K_{B)D}+W_{D(A}\sharp K_{B)E}\right)}\\
\lefteqn{\qquad
-\D_D
\left(
 W_{E(A}\sharp K_{B)C}+
 W_{C(A}\sharp K_{B)E}\right)
 -\D_E
\left( W_{C(A}\sharp K_{B)D}+
 W_{D(A}\sharp K_{B)C}\right)=
}
\\
&=&
\tfrac{4}{3}X^F\left(
W_{C(A}\sharp L_{B)FED}
+
2W_{(D|(A}\sharp L_{B)F|E)C}+ 3W_{(D|(A}\sharp L_{B)|E)FC}\right)
\\
&&
-\tfrac{2}{3}X^FX^G\left( 
2\D_{(D}W_{E)(A}\sharp L_{B)CFG}
+
\D_{(A}W_{|C(D}\sharp L_{E)|B)FG} + \D_{(D}W_{|C(A}\sharp L_{B)|E)FG}\right).\hspace{2cm}
\end{eqnarray*}
Now note that because of the pairwise symmetry of $L$ and the skew symmetry of $W$, we have
\[ W_{(A|(D}\sharp L_{E)C|B)F}=-W_{(D|(A}\sharp L_{B)F|E)C}.\]
This allows to collect some of the terms above as
\begin{eqnarray*}
\lefteqn{ \tfrac{2}{3}W_{(D|(A}\sharp L_{B)F|E)C}
+W_{(D|(A}\sharp L_{B)|E)FC}
-
\tfrac{4}{3}
W_{(A|(D}\sharp L_{E)C|B)F}=}
\\
&=&
2 W_{(D|(A}\sharp L_{B)F|E)C}
+
W_{(D|(A}\sharp L_{B)|E)FC}
\\
&=&
 W_{(D|(A}\sharp L_{B)F|E)C}
+
W_{(A|(D}\sharp L_{E)F|B)C},
\end{eqnarray*}
where the last equality follows from 
$L_{ECBF}=L_{BFEC}$ and 
 $L_{B(FEC)}=0$. Hence, we  we get the following formula for $ \D_CL_{DEAB}$ for $L:=P(\D^2K)$:
\begin{equation}\label{DLexpanded}
\begin{array}{rcl}
 \D_CL_{DEAB}
&=&
X^F \left(   W_{C(D}\sharp L_{E)FAB}
 +  
W_{C(A}\sharp L_{B)FED}\right)
\\[2mm]&&
+X^F \left(
W_{(D|(A}\sharp L_{B)F|E)C}+ W_{(A|(D}\sharp L_{E)F|B)C}
\right)
\\[2mm]
&&
- \tfrac{1}{6}
X^FX^G \left(
\D_{(D}W_{E)C}\sharp L_{ABFG}+
\D_{(A}W_{B)C}\sharp L_{DEFG}\right)
\\[2mm]
&&
-\tfrac{1}{3}X^FX^G\left( 
\D_{(D}W_{E)(A}\sharp L_{B)CFG}
+
\D_{(A}W_{B)(D}\sharp L_{E)CFG}\right)
\\[2mm]
&&
-\tfrac{1}{6}X^FX^G\left( 
\D_{(A}W_{|C(D}\sharp L_{E)|B)FG} + \D_{(D}W_{|C(A}\sharp L_{B)|E)FG}\right).
\end{array}
\end{equation}
Having this formula, we can formulate the following result:
\begin{thm} 
\label{DLprop}
Let $(M,\bp)$ be an arbitrary projective manifold. Then the splitting operator $\mathcal L: S^2T^*M(4)\to \cT_{(2,2)}$ gives an isomorphism between weighted Killing tensors of rank $2$ and 
 sections
$L_{DEAB}$ of the tractor bundle $\mathcal T_{(2,2)}$ of weight zero
 that satisfy equation 
 (\ref{DLexpanded}).
\end{thm}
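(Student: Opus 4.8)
The plan is to read off this statement as the fully explicit $r=2$ instance of Theorem~\ref{preresultthma} (equivalently Corollary~\ref{maincor}). That general result already produces a projectively invariant $\mathcal R_A\sharp\in\Gamma(\cT^*\otimes\operatorname{End}(\cT_{(2,2)}))$ with $X^A\mathcal R_A\sharp=0$ for which $\mathcal L$ identifies rank-$2$ Killing tensors with the solutions $L\in\Gamma(\cT_{(2,2)})$ of $\D_A L=\mathcal R_A\sharp L$; injectivity of $\mathcal L$ is automatic, since it is a differential splitting operator (Proposition~\ref{splitprop}). So the only thing left to do is to check that this $\mathcal R$ is precisely the endomorphism-valued expression on the right-hand side of \eqref{DLexpanded}.

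First I would record the forward implication. If $k\in\Gamma(S^2T^*M(4))$ solves \eqref{kill}, then by Proposition~\ref{pp} the tractor $K=K(k)$ satisfies $\D_{(A}K_{BC)}=0$, so Proposition~\ref{DLKprop} gives the formula \eqref{DLK} for $\D_C(P\D^2K)_{DEAB}=\D_C\mathcal L(k)_{DEAB}$. Expanding the right-hand side of \eqref{DLK} with the Leibniz rule \eqref{DKleibniz}, and then eliminating the undifferentiated $K$- and $\D K$-terms by the substitutions $K_{DE}=\tfrac{2}{3} X^FX^GL_{FGDE}$, $\D_AK_{DE}=\tfrac{4}{3} X^FL_{FADE}$ supplied by \eqref{Ksplit} and \eqref{DKsplit} (with $L=\mathcal L(k)$) — that is, carrying out precisely the manipulation already performed in the paragraphs preceding the theorem — yields \eqref{DLexpanded} with $L=\mathcal L(k)$. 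This proves part~(A): every $\mathcal L(k)$ coming from a rank-$2$ Killing tensor satisfies \eqref{DLexpanded}, and simultaneously it displays the right-hand side of \eqref{DLexpanded} as a first-order (in $\D$) expression whose coefficients are algebraic in $W$ and $\D W$.

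Next I would check that this right-hand side is a genuine section $\mathcal R_C\sharp$ of $\cT^*\otimes\operatorname{End}(\cT_{(2,2)})$ with $X^C\mathcal R_C\sharp=0$, and that it coincides with the $\mathcal R$ of Theorem~\ref{preresultthma}. By Lemma~\ref{fibrelemma}, at each $x\in M$ every $T\in\cT_{(2,2)}|_x$ equals $\mathcal L(k)|_x$ for some local $k$; for such $L=\mathcal L(k)$ the right-hand side of \eqref{DLexpanded} equals $\D_C L|_x$, which lies in $(\cT^*\otimes\cT_{(2,2)})|_x$ and is annihilated by $X^C$ because $L$ has weight $0$ (see \eqref{DX}). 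Since the right-hand side of \eqref{DLexpanded} depends only on $L|_x$, and algebraically, it therefore defines such a bundle map, independent of the choice of $k$; and since $\mathcal L(k)|_x$ can be an arbitrary element of the fibre, the resulting endomorphism is forced to equal the one produced by Theorem~\ref{preresultthma}. Thus \eqref{DLexpanded} is literally equation \eqref{want3a} for $r=2$, and the isomorphism follows from Theorem~\ref{preresultthma}; concretely the converse direction (B) is obtained there by setting $K_{DE}=\tfrac{2}{3} X^BX^CL_{BCDE}$, using $X^BX^C(\mathcal R_D\sharp L)_{BC\cdots}=0$ together with the first part of Proposition~\ref{newprop} to exhibit a Killing tensor $k$, and then the kernel-triviality lemma embedded in the proof of Theorem~\ref{preresultthma} to conclude $L=\mathcal L(k)$.

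I expect the main obstacle to be bookkeeping rather than conceptual: the long curvature computation leading to \eqref{DLexpanded} has to be packaged so that the right-hand side \emph{manifestly} takes values in $\cT_{(2,2)}$ and is free of $X^C$, which requires careful repeated use of the pairwise symmetry \eqref{pairwise1}, the relations $L_{A(CDE)}=0$ and $L_{B(FEC)}=0$, and the second Bianchi identity for $W$. The clean way to sidestep a direct symmetry verification on the final formula is exactly the fibre argument via Lemma~\ref{fibrelemma} above: one establishes the required properties of $\mathcal R_C\sharp$ for $L$ of the form $\mathcal L(k)$, where they are transparent, and then transports them to all of $\cT_{(2,2)}$ because such $L$ exhaust the fibres.
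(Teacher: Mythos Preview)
Your overall plan---deduce the theorem as the $r=2$ instance of Theorem~\ref{preresultthma}---is sound, and it is \emph{different} from the paper's proof, which is self-contained and does not invoke that theorem. The paper argues the converse directly: it checks by hand, using the explicit form of \eqref{DLexpanded}, that $X^DX^E$ contracted into the right-hand side vanishes for any $L\in\Gamma(\cT_{(2,2)})$, then applies the first part of Proposition~\ref{newprop} to obtain a Killing tensor, and finally runs a short explicit kernel argument (apply $\D_C$ twice to $X^DX^E L_{DEAB}=0$ and feed in \eqref{DLexpanded}) to conclude $L=\mathcal L(k)$. Your route is more economical if the identification of the right-hand side of \eqref{DLexpanded} with the $\mathcal R_A\sharp$ of Theorem~\ref{preresultthma} is made correctly; the paper's route has the advantage of displaying, in the concrete $r=2$ case, exactly which contractions and symmetries make the converse work.

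However, your identification step contains a genuine gap. You write that by Lemma~\ref{fibrelemma} every $T\in\cT_{(2,2)}|_x$ is $\mathcal L(k)|_x$ for some local $k$, and that ``for such $L=\mathcal L(k)$ the right-hand side of \eqref{DLexpanded} equals $\D_C L|_x$''. But the forward implication you established (via Proposition~\ref{DLKprop} and the subsequent substitutions) uses $\D_{(A}K_{BC)}=0$, i.e.\ it requires $k$ to satisfy the Killing equation; Lemma~\ref{fibrelemma} supplies only an arbitrary $k$, and on a generic projective manifold there are no Killing tensors at all, so you cannot conclude anything about general fibre elements this way. The argument in the proof of Theorem~\ref{preresultthma} is finer: it works with jets of $k$ satisfying \eqref{kill} only to order $r$ at $x$, and uses that for such jets $\mathcal L(k)|_x$ exhausts the fibre---this is not what Lemma~\ref{fibrelemma} says. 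The cleanest fix is to bypass this step entirely: simply observe that the computation leading to \eqref{DLexpanded} (Proposition~\ref{DLKprop} followed by the substitutions via \eqref{Ksplit} and \eqref{DKsplit}) \emph{is} the algorithm described in part~(A) of the proof of Theorem~\ref{preresultthma}, so by construction the right-hand side of \eqref{DLexpanded} equals $\mathcal R_C\sharp L$ for the specific $\mathcal R$ produced there. With that one sentence your reduction to Theorem~\ref{preresultthma} goes through.
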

\begin{proof} Given a rank $2$ tensor $k_{ab}$ we  define $L_{DEAB}=\D_D\D_EK_{AB}$ and $L_{DEAB}:=(P\D^2K)_{DEAB}$. Then, if $k_{ab} $ is Killing,  it follows from Proposition \ref{DLKprop} and the above computations that $L_{DEAB}$ satisfies equation (\ref{DLexpanded}). 

On the other hand, let $L_{DEAB}$ be a section of $\mathcal T_{(2,2)}$ of weight zero that satisfies equation (\ref{DLexpanded}). 
Contracting (\ref{DLexpanded}) with $X^D$ and $X^E$, one can easily check, using the same arguments as before and that $L_{(DEF)B}=0$, that the right-hand-side vanishes and thus
\[
0=
X^DX^E\D_CL_{DEAB}
\]
Then 
from Proposition \ref{newprop} it follows that $L_{DEAB}$ defines a Killing tensor $k_{ab}$. Moreover we see that $L_{DEAB}=\mathcal L(k)_{DEAB}$ unless the map 
\[
L_{DEAB}\mapsto K_{AB}=X^DX^EL_{DEAB}\in \mathcal T_{(2)}(2).\] has a kernel.
So lets assume there is a section $L_{DEAB} $ of $\cT_{(2,2)}$ that satisfies 
(\ref{DLexpanded})
and such that \begin{equation}\label{xxL0}
X^DX^EL_{DEAB}=0.\end{equation}
 Applying $\D_C$ to  this  and using $0=
X^DX^E\D_CL_{DEAB}$ implies that $0=X^DL_{DEAB}$. Applying $\D_C$ to this 
 gives
\[
0=L_{CEAB}+X^D\D_CL_{DEAB}= L_{CEAB}.\]
Here the second equality uses (\ref{DLexpanded}), which allows us to compute
\begin{eqnarray*}
X^D\D_CL_{DEAB}
&=&
X^DX^F\left( W_{C(A}\sharp L_{B)FED}
+\tfrac{1}{2}
W_{E(A}\sharp L_{B)FDC}\right).
\end{eqnarray*}
But now $L_{B(FED)}=0 $ and (\ref{xxL0}) imply that
\[
X^DX^F W_{C(A}\sharp L_{B)FED}=-X^DX^F W_{C(A}\sharp L_{B)DEF}=0,\]
 which proves that $X^D\D_CL_{DEAB}=0$ and  finishes the proof.
\end{proof}

%
%

Note that the right hand side of (\ref{DLexpanded}) indeed defines a section $\mathcal R_C\sharp $ of $\cT^*\otimes \cT_{(2,2)}$ as claimed in the proof of Theorem \ref{preresultthma}.

In order to extract a covariant derivative from this, we have to
contract it with $\Wp{C}{c}$. In general this contraction is not
projectively invariant.  However, since $L_{DEAB}$ has weight zero,
applying $\D_C$ to it and contracting with $X^C$ gives zero, $X^C \D_C
L_{DEAB}=0$.  Hence, the contraction $\Wp{C}{c} \D_C L_{DEAB}$ is also
projectively invariant for sections $L_{DEAB}$ that satisfy equation~(\ref{DLexpanded}). However we need that the curvature term in   right hand side of  (\ref{DLexpanded}) 
is projectively invariant as claimed in the proof of Theorem \ref{preresultthma}, i.e., that the right hand side of  (\ref{DLexpanded}) is projectively invariant for {\em any} $L_{DEAB}\in \cT_{(2,2)}$  not only for solutions of (\ref{DLexpanded}).
This is the statement of the following lemma.
\begin{lemma}
\label{inv-remark}
For  any $L_{ABDE}\in \cT_{(2,2)}$
the right hand side in
equation (\ref{DLexpanded}) gives zero when contracted with
$X^C$. In particular, the section of $\cT^*\otimes \mathrm{End}(\cT_{(2,2)})$ defined by the right hand side in
  (\ref{DLexpanded}) is projectively invariant.
\end{lemma}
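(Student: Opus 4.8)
The plan is to contract the right hand side of~\eqref{DLexpanded} with $X^C$ and to verify, term by term, that the result vanishes identically as a section of $\cT^*\otimes\operatorname{End}(\cT_{(2,2)})$, for an \emph{arbitrary} $L\in\cT_{(2,2)}$ (the statement of the lemma is equivalent to $X^C\mathcal R_C\sharp=0$, where $\mathcal R_C\sharp$ denotes that right hand side). The only ingredients required are the purely algebraic identities already at hand: that $X^\bullet$ contracted into any lower (or $\sharp$-target) slot of the $W$-curvature is zero, i.e.\ the relations~\eqref{XW0}; the skew-symmetry $W_{AB}{}^E{}_F=W_{[AB]}{}^E{}_F$ (immediate from~\eqref{Wdef}) and the algebraic Bianchi identity in~\eqref{B12}; the identities $[\D_A,X^B]=\delta_A^B$ and $\D_AX^B=\delta_A^B$ from~\eqref{[DX]} and~\eqref{DX}, which let one slide any $X$ past a $\D$ at the cost of a Kronecker delta, and in particular the consequence $X^C\D_AW_{CB}{}^E{}_F=-W_{AB}{}^E{}_F$ (used already inside the proof of Theorem~\ref{preresultthma}) together with $X^C\D_{(D}W_{E)C}{}^E{}_F=-W_{(DE)}{}^E{}_F=0$; the fact that $L$ has weight zero, so $X^C\D_CL=0$; and, finally, the Young symmetries of $L\in\cT_{(2,2)}$, namely the block symmetry $L_{(AB)(CD)}$, the pairwise symmetry $L_{ABCD}=L_{CDAB}$ of Lemma~\ref{sym-lem}, and the vanishing $L_{(ABC)D}=0$.

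Carrying this out, the first line on the right of~\eqref{DLexpanded} is annihilated at once, since there $X^C$ meets the first index of the curvature factor and $X^CW_{C(D}\sharp(\cdots)=0=X^CW_{C(A}\sharp(\cdots)$ by~\eqref{XW0}. Likewise the two terms $X^FX^G\big(\D_{(D}W_{E)C}\sharp L_{ABFG}+\D_{(A}W_{B)C}\sharp L_{DEFG}\big)$ die cleanly: contracting $X^C$ and using $X^C\D_{(D}W_{E)C}{}^E{}_F=-W_{(DE)}{}^E{}_F=0$ kills the whole curvature factor. The remaining pieces, namely the second line of~\eqref{DLexpanded} and the last two $\D W$-lines, are the ones where $X^C$ either lands on a slot of $L$ or, after being slid through a $\D$, produces a residual undifferentiated curvature; in all these cases one expands the $\sharp$-action, moves the three factors of $X$ (the explicit $X^F,X^G$ together with $X^C$) onto $L$ using $\D X=\delta$, and is left with terms of the schematic shape $W{}^H{}_\bullet$ contracted into double or triple $X$-contractions of $L$. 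These residual terms must then cancel \emph{among themselves}, and this cancellation is the single genuine obstacle: the ``mixed'' symmetrisations in~\eqref{DLexpanded} run simultaneously over one slot of $W$ and one slot of $L$, so one has to track carefully which symmetrised monomials produced by different lines coincide after applying $L_{ABCD}=L_{CDAB}$ and $L_{(ABC)D}=0$ (precisely the kind of manipulation carried out around~\eqref{pairwise1}). The bookkeeping is mechanical but delicate; one may instead bypass it via Lemma~\ref{fibrelemma} and the ``closed prolongation'' argument in the proof of Theorem~\ref{preresultthma}: at each $x\in M$ every element of $\cT_{(2,2)}|_x$ arises as ${\mathcal L}(k)|_x$ for a jet $j^{3}_xk$ satisfying~\eqref{kill} to the relevant order, for which~\eqref{DLexpanded} holds at $x$, and contracting that identity with $X^C$ and using $X^C\D_CL=0$ forces the right hand side to vanish on all of $\cT_{(2,2)}|_x$.

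For the ``in particular'' clause: having established $X^C\mathcal R_C\sharp=0$, contract~\eqref{DLexpanded} with $\Wp{C}{c}$. Under a change of representative $\nabla\in\bp$ the splitting element $\Wp{C}{c}$ changes only by a multiple of $X^C$, so $\Wp{C}{c}\mathcal R_C\sharp$ changes by a multiple of $X^C\mathcal R_C\sharp=0$; hence $\Wp{C}{c}\mathcal R_C\sharp$ is a well-defined, projectively invariant section of $T^*M\otimes\operatorname{End}(\cT_{(2,2)})$ — this is the $\mathcal Q_c\sharp$ of Corollary~\ref{maincor} in the case $r=2$, exactly as in the argument given there. Consequently the right hand side of~\eqref{DLexpanded}, viewed as a section of $\cT^*\otimes\operatorname{End}(\cT_{(2,2)})$, is projectively invariant, which is the assertion of the lemma.
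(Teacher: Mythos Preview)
Your argument is correct, and the ``in particular'' clause is handled appropriately. The approach, however, differs from the paper's in a substantive way.

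For lines one and three of~\eqref{DLexpanded} you do exactly what the paper does. For the remaining lines you do not carry out the bookkeeping but instead invoke the abstract prolongation argument from the proof of Theorem~\ref{preresultthma} (via Lemma~\ref{fibrelemma}): every fibre element of $\cT_{(2,2)}$ is realised as ${\mathcal L}(k)|_x$ for a jet formally satisfying~\eqref{kill}, whence~\eqref{DLexpanded} holds at $x$ and $X^C\D_C L=0$ forces the right hand side to vanish. This is valid and essentially reproduces, in the special case $r=2$, what was already established in the general proof of (ii) in Theorem~\ref{preresultthma}; so as a proof of the lemma it is somewhat circular in spirit, though not in logic.

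The paper instead completes the direct, term-by-term verification, which is the point of this explicit $r=2$ section. It observes that the three remaining lines do \emph{not} have to be matched against one another at all: each line vanishes on its own. For line four, contracting with $X^C X^F X^G$ forces three $X$'s into the last three slots of $L$, and $L_{B(CFG)}=0$ kills everything (after commuting the $X$'s past the single $\D$, the residual curvature terms recombine and cancel by the same Young identity). For line five, one computes $X^C\D_{(A}W_{|C(D}\sharp L_{E)|B)FG}=-W_{(A|(D}\sharp L_{E)|B)FG}$, and the two terms of this shape sum to zero because $W_{AD}=-W_{DA}$ together with $L_{(PQ)(RS)}$ makes them negatives of each other. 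For line two, $X^C X^F$ contracted into $W_{(D|(A}\sharp L_{B)F|E)C}+W_{(A|(D}\sharp L_{E)F|B)C}$ vanishes by the skew-symmetry of $W$ alone. Your description that the three lines ``must cancel among themselves'' is therefore coarser than necessary; no inter-line cancellation is needed. The paper's route gives more structural information (which individual pieces vanish and why) and is self-contained within the $r=2$ discussion; your route is shorter but leans on the general theory already proved.
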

\begin{proof}
Clearly both
of the terms of the form $X^CW_{C(D}\sharp L_{E)FAB}$ in the first
line of (\ref{DLexpanded}) vanish separately because $X^CW_{C ABC}=0$.
Also both terms of the form $X^C X^F X^G \D_{(D}W_{E)(A}\sharp
L_{B)CFG}$ in the fourth line of (\ref{DLexpanded}) vanish separately
because $L_{B(CFG)}=0$.  Similarly both terms of the form $X^CX^FX^G
\D_{(D}W_{E)C}\sharp L_{ABFG}$ in the third line of (\ref{DLexpanded})
vanish separately because $X^CW_{C ABC}=0$ and
\[ X^C
\D_{(D}W_{E)C}\sharp L_{ABFG}
=
-
\delta_{(D}^C W_{E)C}\sharp L_{ABFG}
=
 W_{(EC)}\sharp L_{ABFG}
=0.\]
All the other terms in the second and fifth line 
of (\ref{DLexpanded})
do not vanish separately but cancel against each other when contracted with $X^C$. In fact we have
\[
X^C\left( 
\D_{(A}W_{|C(D}\sharp L_{E)|B)FG} + \D_{(D}W_{|C(A}\sharp L_{B)|E)FG}\right)
=-
W_{(A|(D}\sharp L_{E)|B)FG}-W_{(D|(A}\sharp L_{B)|E)FG}=0,
\]
and for the terms in the second line 
 \[
 X^C X^F \left( 
W_{(D|(A}\sharp L_{B)F|E)C}+ W_{(A|(D}\sharp L_{E)F|B)C}
\right)=0,
\]
 because of the skew-symmetry of $W_{DA}$.
 \end{proof}

 In order to obtain from equation (\ref{DLexpanded}) an equation involving the tractor derivative $\nabla_c$, we have to contract it with $\Wp{C}{c}$.   First we look at terms that for which the contracted index $C$ is at the curvature (or its derivative) $W_{AC}$. These will turn out to be manifestly invariant as we can eliminate $\Wp{C}{c}$:
First we observe that
\[
\Wp{C}{c} W_{CA}\sharp L_{BFED} = \Zp{A}{a}\kappa_{ca}\sharp L_{BFED},\]
where $\kappa_{ca}{}^H{}_G$ is the tractor curvature defined in (\ref{tractor_curvature}).
 Hence, for the terms in the first line in equation~(\ref{DLexpanded}) we get
 \[
 X^F \left(   W_{C(D}\sharp L_{E)FAB}
 +  
W_{C(A}\sharp L_{B)FED}\right)
=
X^F\left(
\Zp{(A}{a}\kappa_{|ca|}\sharp L_{B)FED}
+
\Zp{(D}{a}\kappa_{|ca|}\sharp L_{E)FAB}
\right),
\]
which is manifestly invariant. Next we compute, using formulae (\ref{trids})  
 and that the weight of $\W{CD}{H}{F}$ is $-2$, that
\begin{eqnarray*}
\Wp{C}{c}\D_AW_{BC}&=&
-2 Y_A\Zp{B}{b}\kappa_{bc} + \Zp{A}{a} \Wp{C}{c}\nabla_aW_{BC}
\\
&=&
-2 Y_A\Zp{B}{b}\kappa_{bc} + \Zp{A}{a}\left(\nabla_a  (\Wp{C}{c}W_{BC})-\nabla_a\Wp{C}{c} W_{BC}\right)
\\
&=&
-2 Y_A\Zp{B}{b}\kappa_{bc} + \Zp{A}{a}\nabla_a  (\Zp{B}{b}\kappa_{bc}) 
\\
&=&
-(2 Y_A\Zp{B}{b}+Y_B \Zp{A}{b})\kappa_{bc} + \Zp{A}{a}\Zp{B}{b}\nabla_a  \kappa_{bc},
\end{eqnarray*}
because $\nabla_a\Wp{C}{c} W_{BC} =-P_{ac}X^CW_{BC} =0$ and $\nabla_a  \Zp{B}{b}=-\delta_{a}^bY_B$.
Hence, for the expressions in the third line of (\ref{DLexpanded}) we get, 
\[
X^FX^G 
\D_{(A}W_{B)C}\sharp L_{DEFG}
=
X^FX^G 
\left(-3Y_{(A}\Zp{B)}{b}\kappa_{bc}+ \Zp{(A}{a}\Zp{B)}{b}\nabla_a  \kappa_{bc}\right)\sharp  L_{DEFG}
\]
and 
\[
X^FX^G 
\D_{(D}W_{E)C}\sharp L_{ABFG}
=
X^FX^G 
\left(-3Y_{(D}\Zp{E)}{b}\kappa_{bc} + \Zp{(D}{a}\Zp{E)}{b}\nabla_{a}  \kappa_{bc}\right)\sharp  L_{ABFG}.
\]
Similarly we get for the expressions in the fifth line of (\ref{DLexpanded}),
\begin{eqnarray*}
\lefteqn{\left( 
\D_{(A}W_{|C(D}\sharp L_{E)|B)FG} + \D_{(D}W_{|C(A}\sharp L_{B)|E)FG}\right)
=}
\\
&=&
-\tfrac{1}{2}\left( 
\D_{(A}W_{D)C}\sharp L_{BEFG} + \D_{(A}W_{E)C}\sharp L_{DBFG}
+
\D_{(B}W_{D)C}\sharp L_{AEFG} + \D_{(B}W_{E)C}\sharp L_{DAFG}
\right)
\\
&=&
\tfrac{1}{2}\left(3Y_{(A}\Zp{D)}{b}\kappa_{bc}- \Zp{(A}{a}\Zp{D)}{b}\nabla_a  \kappa_{bc}\right)\sharp  L_{BEFG}
\\
&&
+\tfrac{1}{2}\left(3Y_{(A}\Zp{E)}{b}\kappa_{bc}- \Zp{(A}{a}\Zp{E)}{b}\nabla_a  \kappa_{bc}\right)\sharp  L_{BDFG}
\\
&&
+\tfrac{1}{2}\left(3Y_{(B}\Zp{D)}{b}\kappa_{bc}- \Zp{(B}{a}\Zp{D)}{b}\nabla_a  \kappa_{bc}\right)\sharp  L_{AEFG}
\\
&&
+\tfrac{1}{2}\left(3Y_{(B}\Zp{E)}{b}\kappa_{bc}- \Zp{(B}{a}\Zp{E)}{b}\nabla_a  \kappa_{bc}\right)\sharp  L_{ADFG}.
\end{eqnarray*}
Finally, we compute
\[
W_{(D|(A}\sharp L_{B)F|E)C}+ W_{(A|(D}\sharp L_{E)F|B)C}
=
\Zp{(A}{a}\Zp{|(D}{d}\kappa_{|ad|}\sharp \left( L_{E)F|B)C}
- L_{E)C|B)F} 
\right)
\]
and 
\[
\D_{(A}W_{B)(D}\sharp L_{E)CFG}
=
-\left(
3 
Y_{(A}\Zp{B)}{b} \Zp{(D}{d}\kappa_{|bd|}
-
\Zp{(A}{a}\Zp{B)}{b} \Zp{(D}{d}\nabla_{|a}\kappa_{bd|}
\right)\sharp L_{E)CFG},
\]
to
rewrite equation (\ref{DLexpanded}) in terms of the tractor connection as
\begin{equation}\label{DLexpandedtractor}
\begin{array}{rcl}
 \nabla_cL_{DEAB}
&=&
X^F\left(
\Zp{(A}{a}\kappa_{|ca|}\sharp L_{B)FED}
+
\Zp{(D}{a}\kappa_{|ca|}\sharp L_{E)FAB}
\right)
\\[1mm]
&&
+
X^F \Wp{C}{c}
\Zp{(A}{a}\Zp{|(D}{d}\kappa_{|ad|}\sharp \left( L_{E)F|B)C}
- L_{E)C|B)F} 
\right)
\\[1mm]
&&
-
\tfrac{1}{12}X^FX^G \left(3Y_{(A}\Zp{D)}{b}\kappa_{bc}- \Zp{(A}{a}\Zp{D)}{b}\nabla_a  \kappa_{bc}\right)\sharp  L_{BEFG}
\\[1mm]
&&
-
\tfrac{1}{12}X^FX^G \left(3Y_{(A}\Zp{E)}{b}\kappa_{bc}- \Zp{(A}{a}\Zp{E)}{b}\nabla_a  \kappa_{bc}\right)\sharp  L_{BDFG}
\\[1mm]
&&
-
\tfrac{1}{12}X^FX^G \left(3Y_{(B}\Zp{D)}{b}\kappa_{bc}- \Zp{(B}{a}\Zp{D)}{b}\nabla_a  \kappa_{bc}\right)\sharp  L_{AEFG}
\\[1mm]
&&
-
\tfrac{1}{12}X^FX^G \left(3Y_{(B}\Zp{E)}{b}\kappa_{bc}- \Zp{(B}{a}\Zp{E)}{b}\nabla_a  \kappa_{bc}\right)\sharp  L_{ADFG}
\\[1mm]
&&
+
 \tfrac{1}{6}
X^FX^G 
\left(3Y_{(A}\Zp{B)}{b}\kappa_{bc}- \Zp{(A}{a}\Zp{B)}{b}\nabla_a  \kappa_{bc}\right)\sharp  L_{DEFG}
\\[1mm]&&
+
 \tfrac{1}{6}
X^FX^G 
\left(3Y_{(D}\Zp{E)}{b}\kappa_{bc} - \Zp{(D}{a}\Zp{E)}{b}\nabla_{a}  \kappa_{bc}\right)\sharp  L_{ABFG}
\\[1mm]
&&
-\tfrac{1}{3} X^FX^G \Wp{C}{c}\left(
3 
Y_{(A}\Zp{B)}{b} \Zp{(D}{d}\kappa_{|bd|}
-
\Zp{(A}{a}\Zp{B)}{b} \Zp{(D}{d}\nabla_{|a}\kappa_{bd|}
\right)\sharp L_{E)CFG}
\\[1mm]
&&
-\tfrac{1}{3}X^FX^G\Wp{C}{c}\left( 
3
Y_{(D}\Zp{E)}{b} \Zp{(A}{d}\kappa_{|bd|}
-
\Zp{(D}{a}\Zp{E)}{b} \Zp{(A}{d}\nabla_{|a}\kappa_{bd|}
\right)\sharp L_{B)CFG},
%
\end{array}
\end{equation}
where $\nabla_c$ is the projective tractor connection and $\kappa_{bc}$ its curvature.
The right hand side of this equation defines the section $\mathcal Q_a\sharp\in \Gamma(T^*M\otimes \cT_{(2,2)})$ in Corollary \ref{maincor}.
Hence we arrive at:

\begin{thm} Let $(M,\bp)$ be an arbitrary projective manifold. Then the splitting operator $\mathcal L: S^2T^*M(4)\to \cT_{(2,2)}$ gives an isomorphism between weighted Killing tensors of rank $2$  and 
 sections
$L_{DEAB}$ of the tractor bundle $\mathcal T_{(2,2)}$ of weight zero
 that satisfy equation (\ref{DLexpandedtractor})
for the projective tractor connection $\nabla_a$, 
or equivalently,  parallel sections of the connection $\nabla_a-\mathcal Q_a\sharp$. Moreover, the right hand side of (\ref{DLexpandedtractor}) is projectively invariant.
\end{thm}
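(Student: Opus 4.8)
The plan is to derive this from Theorem~\ref{DLprop}, which already establishes the desired isomorphism in terms of the Thomas-$\D$ equation~\eqref{DLexpanded}. Writing $\mathcal R_C\sharp L$ for the right hand side of~\eqref{DLexpanded} --- a projectively invariant section of $\cT^*\otimes\operatorname{End}(\cT_{(2,2)})$, as noted after Theorem~\ref{DLprop} --- all that remains is to show that, for a weight-zero section $L$ of $\cT_{(2,2)}$, the equation $\D_C L = \mathcal R_C\sharp L$ is equivalent to the tractor-connection equation~\eqref{DLexpandedtractor}, i.e.\ to $\nabla_c L = \mathcal Q_c\sharp L$ with $\mathcal Q_c\sharp := \Wp{C}{c}\mathcal R_C\sharp$, and that this $\mathcal Q_c\sharp$ is precisely the operator appearing on the right of~\eqref{DLexpandedtractor}.

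First I would observe that, since $L$ has weight $0$, the definition~\eqref{TD} gives $\D_C L = \Zp{C}{c}\nabla_c L$. Contracting $\D_C L = \mathcal R_C\sharp L$ with $\Wp{C}{c}$ and using $\Wp{C}{c}\Zp{C}{d} = \delta^d_c$ then yields $\nabla_c L = \Wp{C}{c}\mathcal R_C\sharp L = \mathcal Q_c\sharp L$. For the converse I would use the splitting identity $\delta_C^D = Y_C X^D + \Zp{C}{c}\Wp{D}{c}$: given $\nabla_c L = \mathcal Q_c\sharp L$ it follows that $\D_C L = \Zp{C}{c}\nabla_c L = \Zp{C}{c}\Wp{D}{c}\mathcal R_D\sharp L = \mathcal R_C\sharp L - Y_C\,(X^D\mathcal R_D\sharp L)$, and by Lemma~\ref{inv-remark} we have $X^D\mathcal R_D\sharp = 0$ on all of $\cT_{(2,2)}$, so the last term vanishes and~\eqref{DLexpanded} is recovered. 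Hence the two equations cut out exactly the same weight-zero sections of $\cT_{(2,2)}$, Theorem~\ref{DLprop} transfers verbatim, and the reformulation as parallel sections of $\nabla_a - \mathcal Q_a\sharp$ is immediate. That $\Wp{C}{c}\mathcal R_C\sharp L$ really is the multi-term right hand side of~\eqref{DLexpandedtractor} is nothing but the chain of substitutions carried out in the displays immediately preceding the statement: every occurrence of $\Wp{C}{c}W_{CA}\sharp$ is rewritten as $\Zp{A}{a}\kappa_{ca}\sharp$ via the tractor curvature $\kappa$ of~\eqref{tractor_curvature}, and every $\Wp{C}{c}\D_A W_{B\cdots}$ is expanded using~\eqref{trids}, exactly as written out there.

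For the final assertion, projective invariance of the right hand side of~\eqref{DLexpandedtractor}, I would note that $\mathcal R_C\sharp$ is itself projectively invariant, being built from $X$, the $W$-curvature and $\D$, all of which are projectively invariant, while under a change $\nabla\rightsquigarrow\nabla'$ within $\bp$ the splitting tractor $\Wp{C}{c}$ changes only by a multiple of $X^C$ (see~\cite{bailey-eastwood-gover94}); since $X^C\mathcal R_C\sharp = 0$ by Lemma~\ref{inv-remark}, the contraction $\Wp{C}{c}\mathcal R_C\sharp$ is unchanged. This is the same mechanism already used to prove Corollary~\ref{maincor}.

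I expect the only real obstacle to be computational rather than conceptual: checking that the substitutions genuinely collapse $\Wp{C}{c}\mathcal R_C\sharp L$ to the displayed formula~\eqref{DLexpandedtractor} with all symmetrisation brackets and the $|\cdot|$-exclusion conventions kept straight, and verifying $X^C\mathcal R_C\sharp = 0$ term by term (which is Lemma~\ref{inv-remark}) so that the whole reduction --- and in particular the equivalence of the two forms of the equation above --- is legitimate. Everything else follows formally from the tractor identities~\eqref{trids}, \eqref{DX}, \eqref{[DX]} together with the Young symmetry $L_{A(BCD)}=0$ that is exploited throughout Section~\ref{c-section}.
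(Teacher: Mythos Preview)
Your proposal is correct and follows essentially the same route as the paper: the paper's own proof reads ``follows immediately from Theorem~\ref{DLprop} and Lemma~\ref{inv-remark} and from the computations above,'' and you have simply unpacked what ``immediately'' means --- contracting \eqref{DLexpanded} with $\Wp{C}{c}$ to pass to \eqref{DLexpandedtractor}, and using Lemma~\ref{inv-remark} (that $X^C\mathcal R_C\sharp=0$ on all of $\cT_{(2,2)}$) both to recover the converse direction via $\delta_C^D=Y_CX^D+\Zp{C}{c}\Wp{D}{c}$ and to obtain projective invariance of $\mathcal Q_c\sharp$. Your explicit treatment of the converse equivalence is a useful clarification that the paper leaves implicit.
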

\begin{proof}The proof follows immediately from Theorem \ref{DLprop} and Lemma \ref{inv-remark} and from the computations above.
\end{proof}

%
%
%

%
%
%
%


\providecommand{\MR}[1]{}\def\cprime{$'$} \def\cprime{$'$} \def\cprime{$'$}

\end{document}